\documentclass[11pt]{article}

\usepackage[a4paper,margin=29mm]{geometry}
\usepackage{amsmath,amssymb,amsthm,mathtools}
\usepackage{array}
\usepackage{bm}
\usepackage{enumitem}
\usepackage{cite}
\usepackage{hyperref}
\usepackage[nameinlink,capitalize]{cleveref}
\usepackage{authblk}
\usepackage{microtype}

\hypersetup{
 colorlinks=true,
 linkcolor=blue,
 citecolor=blue,
 urlcolor=blue,
 pdftitle={Pathwise stability for one-dimensional SDEs driven by Brownian motion and a symmetric stable process},
 pdfauthor={Takuya Nakagawa and Ryoichi Suzuki},
 pdfsubject={Quantitative pathwise stability for mixed Brownian-stable stochastic differential equations},
 pdfkeywords={symmetric stable process, Brownian motion, stochastic differential equation, pathwise stability, Komatsu method}
}

\newtheorem{theorem}{Theorem}[section]
\newtheorem{proposition}[theorem]{Proposition}
\newtheorem{lemma}[theorem]{Lemma}
\newtheorem{corollary}[theorem]{Corollary}
\newtheorem{remark}[theorem]{Remark}
\newtheorem{assumption}[theorem]{Assumption}
\newtheorem{definition}[theorem]{Definition}

\crefname{assumption}{Assumption}{Assumptions}
\Crefname{assumption}{Assumption}{Assumptions}
\crefname{proposition}{Proposition}{Propositions}
\Crefname{proposition}{Proposition}{Propositions}
\crefname{theorem}{Theorem}{Theorems}
\Crefname{theorem}{Theorem}{Theorems}
\crefname{lemma}{Lemma}{Lemmas}
\Crefname{lemma}{Lemma}{Lemmas}
\crefname{corollary}{Corollary}{Corollaries}
\Crefname{corollary}{Corollary}{Corollaries}

\newcommand{\R}{\mathbb{R}}
\newcommand{\E}{\mathbb{E}}
\newcommand{\Pbb}{\mathbb{P}}
\newcommand{\F}{\mathcal{F}}
\newcommand{\1}{\mathbf{1}}
\newcommand{\dd}{\,\mathrm{d}}
\newcommand{\md}{\mathrm{d}}
\newcommand{\wt}{\tilde}
\newcommand{\eps}{\varepsilon}
\newcommand{\sgn}{\mathrm{sgn}}
\newcommand{\supp}{\mathrm{supp}}

\newcommand{\RieszK}{\mathcal K_\alpha}

\title{Pathwise stability for one-dimensional SDEs driven by Brownian motion and a symmetric stable process}
\author[1]{Takuya Nakagawa\thanks{Email: \href{mailto:takuya.nakagawa73@gmail.com}{takuya.nakagawa73@gmail.com}. ORCID: \href{https://orcid.org/0000-0003-0379-5833}{0000-0003-0379-5833}.}}
\author[2]{Ryoichi Suzuki\thanks{Corresponding author. Email: \href{mailto:rsuzukimath@gmail.com}{rsuzukimath@gmail.com}. ORCID: \href{https://orcid.org/0000-0001-9979-1882}{0000-0001-9979-1882}.}}
\affil[1]{Graduate School of Science, Kyoto University, Yoshida-honmachi, Kyoto 606--8501, Japan}
\affil[2]{Department of Business Economics, School of Management, Tokyo University of Science, 1--11--2 Fujimi, Chiyoda-city, Tokyo 102--0071, Japan}
\date{}

\begin{document}
\maketitle

\begin{abstract}
We prove quantitative pathwise stability estimates for one-dimensional stochastic differential equations driven by a common Brownian motion and a common symmetric $\alpha$-stable process, where $\alpha\in(1,2)$. The comparison is made under a synchronous coupling and is measured by $\sup_{0\le t\le T}\mathbb E|X_t-\wt X_t|^{\alpha-1}$. The perturbed drift and Brownian diffusion coefficients are spatially Lipschitz, while the perturbed stable jump coefficient may be H\"older continuous down to the critical exponent $1/\alpha$. The estimate is expressed in terms of the initial error and three law-weighted coefficient errors: the drift error $B$, the Brownian diffusion error $A$, and the stable jump-coefficient error $S$. The Brownian component produces a second-order correction term in It\^o's formula. This term is controlled by a second-derivative estimate for a Komatsu-type mollification of $|x|^{\alpha-1}$. For stable jump coefficients with H\"older exponent $\wt\eta>1/\alpha$, the estimate gives explicit power rates in $B,A,S$; at $\wt\eta=1/\alpha$, it gives a logarithmic rate. The same method yields estimates with predictable forcing errors and time-uniform tail bounds via stopped quasi-martingales.
\end{abstract}

\noindent\textbf{Keywords.} Symmetric stable process; Brownian motion; stochastic differential equation; pathwise stability; Komatsu method; H\"older stable jump coefficient.

\medskip
\noindent\textbf{Mathematics Subject Classification.} 60H10; 60G52; 60H07; 60H50.

\section{Introduction}
\label{sec:introduction}

Let $\alpha\in(1,2)$ and let $Z$ be a one-dimensional symmetric $\alpha$-stable process. The comparison argument below is based on the one-dimensional Riesz kernel
\[
  \RieszK(x):=|x|^{\alpha-1}.
\]
For the symmetric stable generator $L^\alpha$, this kernel satisfies
\[
  L^\alpha \RieszK=C_\alpha\delta_0
\]
in $\mathcal S'(\mathbb R)$ for a constant $C_\alpha>0$, where $\mathcal S'(\mathbb R)$ denotes the space of tempered distributions and $\delta_0$ is the Dirac mass at the origin. The precise statement and proof are given in Lemma~\ref{lem:stable-generator}. Komatsu's pathwise uniqueness argument uses smooth approximations of $\RieszK$. The same class of approximations is used here to estimate the distance between two solutions driven by the same noise.

The analysis starts from the quantitative stability theory for one-dimensional stable-driven SDEs with non-zero drift developed in Nakagawa~\cite{Nakagawa2026}. In that setting one compares a baseline equation
\[
  X_t=x_0+\int_0^t b(X_s)\dd s+\int_0^t\sigma(X_{s-})\dd Z_s
\]
with a time-inhomogeneous perturbation, and obtains an explicit bound for
\[
  \sup_{0\le t\le T}\mathbb E|X_t-\wt X_t|^{\alpha-1}.
\]
In that result, coefficient errors are measured by expectations along the law of the baseline process. Equivalently, when a transition density is available, they are weighted integral norms. The estimates therefore depend on coefficient differences weighted by the baseline law, rather than on their global supremum over all space; see \cite{Nakagawa2026}.

This paper extends this Komatsu-type $L^{\alpha-1}$ stability method from the stable-only setting to the mixed Brownian--stable system
\begin{align*}
X_t &= x_0+
\int_0^t b(s,X_s)\dd s+
\int_0^t a(s,X_s)\dd W_s+
\int_0^t \sigma(s,X_{s-})\dd Z_s,\\
\wt X_t &= \wt x_0+
\int_0^t \wt b(s,\wt X_s)\dd s+
\int_0^t \wt a(s,\wt X_s)\dd W_s+
\int_0^t \wt\sigma(s,\wt X_{s-})\dd Z_s,
\end{align*}
where the two equations are driven by the same Brownian motion $W$ and the same symmetric stable process $Z$. Throughout the paper, ``pathwise'' refers to this synchronous coupling: the estimate compares two solution paths on the same probability space and not only their marginal laws.

\subsection{Motivation and analytic structure}
\label{subsec:motivation}

The mixed equation contains three coefficient classes. The drift contributes a finite-variation term, the Brownian coefficient contributes a quadratic-variation term, and the stable coefficient contributes a non-local jump-compensator term. The pathwise distance between the two coupled solutions is estimated in terms of the corresponding coefficient discrepancies.

The Brownian term changes the analytic structure of the comparison argument. For the stable-only equation, It\^o's formula applied to a mollified version $u_{\delta,\eps}$ of $|x|^{\alpha-1}$ produces drift and jump-compensator terms. For the mixed equation, the same formula also contains the Brownian second-order correction
\[
  \frac12\int_0^t u_{\delta,\eps}''(X_s-\wt X_s)
  \{a(s,X_s)-\wt a(s,\wt X_s)\}^2\dd s.
\]
Consequently, the proof uses two estimates for the same mollified kernel $u_{\delta,\eps}$: a first-derivative estimate for the drift term and a second-derivative estimate for the Brownian correction. Both estimates retain the dependence on the localization parameters $(\delta,\eps)$, which are optimized in the final rate calculation.

\subsection{Related work}
\label{subsec:related-work}

For Brownian SDEs, the classical Yamada-Watanabe theory gives pathwise uniqueness under a $1/2$-H\"older condition on the diffusion coefficient in one dimension \cite{YamadaWatanabe1971}. For one-dimensional SDEs driven by symmetric $\alpha$-stable processes, Komatsu's method and later refinements use the threshold $1/\alpha$ for the stable coefficient \cite{Komatsu1982,BassBurdzyChen2004,Fournier2013}. The present work considers the mixed regime in which the Brownian diffusion coefficient of the perturbed equation is spatially Lipschitz and the stable coefficient has H\"older exponent in $[1/\alpha,1]$.

The stability result is closest to the stable-only $L^{\alpha-1}$ theory, both without and with drift and time-dependent perturbations \cite{Nakagawa2020,Nakagawa2026}. Quantitative stability rates for one-dimensional SDEs have also been studied by Hashimoto and Tsuchiya through Yamada-Watanabe type arguments, including discontinuous-coefficient settings and stable-noise cases \cite{HashimotoTsuchiya2014}. Hashimoto studied approximation and stability for SDEs driven by a symmetric $\alpha$-stable process under non-Lipschitz coefficient conditions \cite{Hashimoto2013}. Relative to these works, the present analysis treats Brownian and stable noises in one coupled equation, allows time-dependent baseline and perturbed coefficients, with perturbed spatial moduli in $L^1$, $L^2$, and $L^\alpha$, measures the three coefficient errors by $B,A,S$, and proves both non-critical power rates and the critical logarithmic rate. The additional analytic term is the Brownian second-order correction, which is absent in the pure stable equation. Related background on stable-driven equations and jump SDE approximation can be found in Bass's survey and in works such as Priola and Qiao \cite{Bass2002,Priola2012,Qiao2014}.

Table~\ref{tab:comparison} records the comparison with the closest one-dimensional stability results. The entries list the features used in the present argument: synchronous coupling by Brownian and stable noises, simultaneous errors in the drift, Brownian diffusion, and stable jump coefficients, and the critical logarithmic estimate.
\begin{table}[h]
\centering
\small
\begin{tabular}{>{\raggedright\arraybackslash}p{0.23\textwidth}
          >{\raggedright\arraybackslash}p{0.19\textwidth}
          >{\raggedright\arraybackslash}p{0.27\textwidth}
          >{\raggedright\arraybackslash}p{0.20\textwidth}}
\hline
Work & Driving noise & Coefficient perturbations & Distance and rate features \\
\hline
Hashimoto--Tsuchiya~\cite{HashimotoTsuchiya2014} & Brownian motion or a symmetric $\alpha$-stable process (in separate settings) & one-dimensional stability problems with non-Lipschitz or discontinuous coefficients & explicit convergence rates under the conditions considered \\
Nakagawa~\cite{Nakagawa2026} & symmetric stable process & drift and stable jump coefficient errors & $L^{\alpha-1}$ pathwise rate with law-weighted $B,S$ and a critical logarithmic rate \\
This work & common Brownian motion and common symmetric stable process & drift error $B$, Brownian diffusion error $A$, stable jump coefficient error $S$ & law-weighted mixed-noise estimate, second-order Komatsu bound, power and critical logarithmic rates \\
\hline
\end{tabular}
\caption{Main features used for comparison with closely related one-dimensional stability results.}
\label{tab:comparison}
\end{table}

The difference from Nakagawa~\cite{Nakagawa2026} appears at three mathematical points. First, the Brownian component produces a second-order correction term, so the Komatsu mollifier has to be estimated at the level of $u_{\delta,\eps}''$ as well as $u_{\delta,\eps}'$. Second, the law-weighted distance contains the quadratic Brownian error $A$, and its admissible growth is governed by the stable tails of the baseline law. Third, the optimization of the fundamental estimate balances the mixed error scales $B,A,S$ simultaneously, including the additional $\delta^2 A^2$ term in the critical logarithmic case. The proof is carried out directly for the mixed equation.

This analysis develops the stable-only law-weighted framework by adding the Brownian diffusion coefficient error
\[
  A^2=\int_0^T\mathbb E |a(s,X_s)-\wt a(s,X_s)|^2\dd s
    =\int_0^T\int_{\mathbb R}|a(s,y)-\wt a(s,y)|^2\,\mu_s^X(\md y)\dd s,
\]
where $\mu_s^X=\mathcal L(X_s)$. If this law has a density, the last expression becomes the corresponding density-weighted spatial norm. The pathwise argument itself does not require a transition density. For additive mixed Brownian--stable processes, heat kernel estimates for operators of the form $\Delta+a^\alpha\Delta^{\alpha/2}$ are known; Chen, Kim and Song identify the associated process as the sum of Brownian motion and an independent symmetric $\alpha$-stable process and prove sharp two-sided estimates in suitable domains \cite{ChenKimSong2010}. Gradient perturbations of such mixed generators have also been studied by Chen and Hu \cite{ChenHu2015}. These heat-kernel results are relevant for converting law-weighted distances into spatial integral norms. The stability proof below is formulated directly in terms of the corresponding law-weighted expectations.

The law-weighted distances encode the tail behavior of the baseline law. In particular, $A^2$ contains the second moment of the Brownian coefficient difference along the baseline law; for a stable baseline law, this may be infinite for linearly growing differences. Section~\ref{subsec:finite-distances} gives sufficient growth conditions under which $B,A,S$ are finite, while uniformly bounded coefficient differences yield the law-independent bounds recorded in Section~\ref{sec:consequences}.

\subsection{Summary of results}
\label{subsec:contributions}

The results used in the main proofs are summarized as follows.

First, we formulate a fixed-coupling pathwise stability estimate for a pair of solutions driven by the same Brownian motion and the same symmetric stable process. The coefficient errors are measured by the three law-weighted quantities
\[
  B=\int_0^T\mathbb E |b(s,X_s)-\wt b(s,X_s)|\dd s,
\]
\[
  A^2=\int_0^T\mathbb E |a(s,X_s)-\wt a(s,X_s)|^2\dd s,
\]
\[
  S^\alpha=\int_0^T\mathbb E |\sigma(s,X_s)-\wt\sigma(s,X_s)|^\alpha\dd s.
\]
The quantity absent from the stable-only framework is $A$, which measures the Brownian diffusion coefficient error. The finite-distance criteria below identify perturbation classes for which the law-weighted quantities $B,A,S$ are finite. The Brownian error is quadratic, so its admissible spatial growth is tied to the stable moments of the baseline process. These criteria are recorded together with model examples: one with three non-vanishing coefficient distances, one with a bounded Brownian perturbation, and one with a sublinear spatial Brownian perturbation for which the condition $2\theta<\alpha$ appears explicitly.

Second, we prove a localized fundamental estimate. For $\delta\ge2$ and $\eps\in(0,1)$,
\begin{align*}
\sup_{0\le t\le T}\mathbb E|X_t-\wt X_t|^{\alpha-1}
\le C\Bigg[&|x_0-\wt x_0|^{\alpha-1}
+\eps^{\alpha-1}
+\frac{\delta}{\log\delta}\frac{B}{\eps^{2-\alpha}}\\
&+\frac{\delta^2}{\log\delta}\frac{A^2}{\eps^{3-\alpha}}
+\frac{\delta}{\log\delta}\frac{S^\alpha}{\eps}
+\frac{\delta^2}{\log\delta}\eps^{\alpha-1}
+\frac{\eps^{\alpha\wt\eta-1}}{\log\delta}
\Bigg].
\end{align*}
The term $A^2\eps^{-(3-\alpha)}$ is generated by the Brownian diffusion coefficient error. Its derivation uses a second-derivative estimate for the Komatsu-type mollified kernel.

Third, optimizing the localization parameters gives explicit rates. In the non-critical case $\wt\eta\in(1/\alpha,1]$ this yields power rates in $B,A,S$, while at the critical exponent $\wt\eta=1/\alpha$ it yields a logarithmic rate; the zero-error case is understood by taking limits. The same localized estimates also yield a time-uniform convergence-in-probability bound through the classical maximal inequality for stopped quasi-martingales. Constant-coefficient examples show that the exponent $\alpha-1$ is attained in the Lipschitz subclass.

The localized estimate also admits predictable forcing errors. This formulation separates the comparison mechanism from the special form of a coefficient perturbation. Predictable drift, Brownian-diffusion, and stable-jump errors are absorbed into the same three scales $B,A,S$; subsequent discretization or model-error analyses may use this form after the corresponding one-step or model-error bounds have been established.

\subsection{Organization}
\label{subsec:organization}

The paper is organized as follows. \Cref{sec:model} introduces the coupled equations and the standing assumptions. \Cref{sec:distances} defines the law-weighted coefficient distances, gives verifiable sufficient conditions for their finiteness, and records examples with H\"older non-Lipschitz stable coefficients in the non-critical range. \Cref{sec:main-results} states the localized fundamental estimate and the optimized non-critical and critical rates. \Cref{sec:consistency} records consistency checks in constant-coefficient examples. \Cref{sec:auxiliary} constructs the log-annular mollifier, proves the first- and second-derivative estimates, and establishes the Riesz-kernel identity used for the stable generator. \Cref{sec:localization} supplies the localization and a priori estimates needed before Gronwall's inequality is applied. \Cref{sec:proof-fundamental} proves the fundamental estimate by separately treating the drift, Brownian correction, and stable jump terms. \Cref{sec:probability} proves the convergence-in-probability estimate using stopped quasi-martingales. \Cref{sec:forcing} states the predictable-forcing version of the fundamental estimate. Finally, \Cref{sec:consequences,sec:concluding} present uniform coefficient bounds, the time-homogeneous approximation corollary, and concluding remarks.

\section{Model and assumptions}
\label{sec:model}

For real numbers $x$ and $y$, write
\[
  x\wedge y:=\min\{x,y\},\qquad x\vee y:=\max\{x,y\}.
\]
The symbol $\1_A$ denotes the indicator of a set $A$, $\mathcal L(\xi)$ denotes the law of a random variable $\xi$, and $\|\cdot\|_\infty$ denotes the uniform norm. Convolution on $\R$ is written as
\[
  (f*g)(x):=\int_\R f(x-y)g(y)\dd y
\]
whenever the integral is well defined. Time-homogeneous coefficients are identified with their constant-in-time extensions to $[0,T]\times\R$.

Let $(\Omega,\F,(\F_t)_{0\le t\le T},\Pbb)$ be a filtered probability space satisfying the usual conditions. Let $W=(W_t)_{0\le t\le T}$ be a one-dimensional Brownian motion. Let $Z=(Z_t)_{0\le t\le T}$ be a one-dimensional symmetric $\alpha$-stable process, independent of $W$, with L\'evy measure (see, e.g., \cite{Applebaum2009})
\[
  \nu(\md z)=c_\alpha |z|^{-1-\alpha}\,\md z.
\]
Let $N$ and $\wt N$ be the Poisson random measure and compensated Poisson random measure associated with $Z$. The symmetric compensation convention is used for stochastic integration with respect to $Z$. More explicitly, for predictable integrands $\gamma$ for which the integral is defined,
\[
  \int_0^t \gamma_s\dd Z_s
  =\int_0^t\int_{\R\setminus\{0\}}\gamma_s z\,\wt N(\md s,\md z),
\]
where the drift contribution from symmetric annuli vanishes by symmetry. Indeed, for every predictable $\gamma_s$ and every $r>0$, the compensator of the annular first-order term is proportional to
\[
  \gamma_s\int_{|z|>r}z\,\nu(\md z)=0,
\]
whenever the annular integral is absolutely convergent; this is the case for the annuli used below because $\alpha>1$.

Consider
\begin{align}
X_t
&=x_0+\int_0^t b(s,X_s)\dd s+\int_0^t a(s,X_s)\dd W_s
 +\int_0^t \sigma(s,X_{s-})\dd Z_s, \label{eq:X}\\
\wt X_t
&=\wt x_0+\int_0^t \wt b(s,\wt X_s)\dd s+\int_0^t \wt a(s,\wt X_s)\dd W_s
 +\int_0^t \wt\sigma(s,\wt X_{s-})\dd Z_s. \label{eq:Xtilde}
\end{align}
Set
\[
  Y_t:=X_t-\wt X_t.
\]

\begin{assumption}[Baseline coefficients]\label{ass:baseline}
The functions $b,a,\sigma:[0,T]\times\R\to\R$ are jointly Borel measurable and have at most linear growth uniformly in time; that is, there exists $K>0$ such that
\[
  |b(t,x)|+|a(t,x)|+|\sigma(t,x)|\le K(1+|x|),
  \qquad (t,x)\in[0,T]\times\R.
\]
\end{assumption}

\begin{assumption}[Perturbed coefficients]\label{ass:perturbed}
The functions $\wt b,\wt a,\wt\sigma:[0,T]\times\R\to\R$ are jointly measurable. There exist nonnegative functions
\[
  \ell_b\in L^1(0,T),\qquad \ell_a\in L^2(0,T),\qquad \ell_\sigma\in L^\alpha(0,T),
\]
and an exponent
\[
  \wt\eta\in\left[\frac1\alpha,1\right]
\]
such that, for all $t\in[0,T]$ and $x,y\in\R$,
\begin{align}
  |\wt b(t,x)-\wt b(t,y)|&\le \ell_b(t)|x-y|, \label{eq:b-lip}\\
  |\wt a(t,x)-\wt a(t,y)|&\le \ell_a(t)|x-y|, \label{eq:a-lip}\\
  |\wt\sigma(t,x)-\wt\sigma(t,y)|&\le \ell_\sigma(t)\bigl(|x-y|^{\wt\eta}\vee |x-y|\bigr). \label{eq:sigma-holder}
\end{align}
The endpoint $\wt\eta=1/\alpha$ is the critical case treated in Theorem~\ref{thm:critical}, while $\wt\eta=1$ is the Lipschitz case included in Theorem~\ref{thm:noncritical}.
\end{assumption}

\begin{assumption}[Fixed coupled solutions and finite coefficient distances]
\label{ass:solutions}
On the filtered probability space above, let $X$ and $\wt X$ be c\`adl\`ag adapted solutions of \eqref{eq:X} and \eqref{eq:Xtilde}, respectively, driven by the same pair $(W,Z)$. The stochastic integrals in both equations are assumed to be well defined. The coefficient distances are computed along this fixed baseline solution $X$, and the quantities $B,A,S$ defined in \Cref{sec:distances} are assumed finite.
\end{assumption}

\begin{remark}[Fixed-coupling formulation]
The estimates are formulated for a prescribed synchronous coupling. The existence of the coupled pair is part of the standing framework and is logically separate from the comparison estimate. Whenever a well-posedness result or an explicit construction supplies such a pair, the results below apply to that pair.
\end{remark}

\begin{remark}[Asymmetric structure]
The stability estimate is asymmetric because the coefficient distances are measured under the law of the baseline process $X$. It therefore quantifies the sensitivity of this fixed baseline law to a perturbation realized on the same probability space.
\end{remark}

\begin{remark}[Use of the local H\"older modulus]
The proof uses the H\"older part of \eqref{eq:sigma-holder} only on the support of $\psi_{\delta,\eps}(Y_s)$, where $|Y_s|<\eps\le1$. On this region $|Y_s|^{\wt\eta}\vee |Y_s|=|Y_s|^{\wt\eta}$. For large differences the linear part in \eqref{eq:sigma-holder} is used only in stopped a priori estimates.
\end{remark}

\begin{remark}[Constants and localization]
The proof uses localization rather than global deterministic envelopes of the perturbed coefficients. The martingale and Gronwall arguments are first performed up to stopping times. On stopped intervals, the coefficients are controlled by their direct differences along $X$ and by the moduli $\ell_b,\ell_a,\ell_\sigma$. The constants in the fundamental estimate depend on $T,\alpha,\wt\eta$, the norms of these moduli, and the fixed mollifier profile $\varphi$. The linear-growth constant $K$ is used in the moment estimate of Proposition~\ref{prop:baseline-moments}, but not in the comparison estimate once the coupled pair and the finite distances $B,A,S$ are fixed.
\end{remark}

\begin{remark}[Stopped local integrability]
On stopped intervals, the local integrability required in the comparison argument follows from the direct coefficient differences and the spatial moduli. In particular,
\begin{align*}
|\wt b(s,\wt X_s)|&\le |b(s,X_s)|+|b(s,X_s)-\wt b(s,X_s)|+\ell_b(s)|Y_s|,\\
|\wt a(s,\wt X_s)|&\le |a(s,X_s)|+|a(s,X_s)-\wt a(s,X_s)|+\ell_a(s)|Y_s|,\\
|\wt\sigma(s,\wt X_s)|&\le |\sigma(s,X_s)|+|\sigma(s,X_s)-\wt\sigma(s,X_s)|
 +\ell_\sigma(s)(|Y_s|^{\wt\eta}\vee |Y_s|).
\end{align*}
The drift requires only $L^1$-integrability, which is provided by $B<\infty$, $\ell_b\in L^1(0,T)$, and the stopping. The Brownian and stable terms are handled similarly in the $L^2$ and $L^\alpha$ scales, respectively.
\end{remark}

\section{Law-weighted coefficient distances}
\label{sec:distances}

For each $s\in[0,T]$, let
\[
  \mu_s^X:=\mathcal L(X_s)
\]
denote the marginal law of the baseline process. The coefficient errors are measured directly against these laws. Define
\begin{align}
B&:=\int_0^T\E |b(s,X_s)-\wt b(s,X_s)|\dd s
  =\int_0^T\int_\R |b(s,y)-\wt b(s,y)|\,\mu_s^X(\md y)\dd s, \label{eq:B-exp}\\
A^2&:=\int_0^T\E |a(s,X_s)-\wt a(s,X_s)|^2\dd s
  =\int_0^T\int_\R |a(s,y)-\wt a(s,y)|^2\,\mu_s^X(\md y)\dd s, \label{eq:A-exp}\\
S^\alpha&:=\int_0^T\E |\sigma(s,X_s)-\wt\sigma(s,X_s)|^\alpha\dd s
  =\int_0^T\int_\R |\sigma(s,y)-\wt\sigma(s,y)|^\alpha\,\mu_s^X(\md y)\dd s. \label{eq:S-exp}
\end{align}
These definitions do not require a transition density. They also show directly that coefficient differences in regions receiving little mass under $\mu_s^X$ have correspondingly little influence on the estimates.

If $\mu_s^X$ is absolutely continuous with respect to Lebesgue measure, write
\[
  \mu_s^X(\md y)=p_s^X(y)\,\md y.
\]
Then the same quantities admit the spatial representations
\begin{align}
B&=\int_0^T\int_\R |b(s,y)-\wt b(s,y)|p_s^X(y)\dd y\dd s, \label{eq:B-density}\\
A^2&=\int_0^T\int_\R |a(s,y)-\wt a(s,y)|^2p_s^X(y)\dd y\dd s, \label{eq:A-density}\\
S^\alpha&=\int_0^T\int_\R |\sigma(s,y)-\wt\sigma(s,y)|^\alpha p_s^X(y)\dd y\dd s. \label{eq:S-density}
\end{align}
When the chosen baseline solution is Markov and admits a transition density, $p_s^X(y)$ is the transition density from the initial state $x_0$ at time $0$. The density representation is optional and is not used in the proof of the fundamental estimate. For example, in the time-homogeneous stable-only setting, transition-density results are available under boundedness, non-degeneracy, and H\"older-type conditions on the stable coefficient; see Knopova and Kulik~\cite{KnopovaKulik2018}. Other coefficient classes require the corresponding external density theory.

\subsection{Verifiable sufficient conditions for finite distances}
\label{subsec:finite-distances}

The finite-distance conditions for $A$ and $S$ reflect the tail behavior of stable laws. The following moment bound and corollary give a concrete class of coefficient differences for which the distances in \eqref{eq:B-exp}-\eqref{eq:S-exp} are finite.

\begin{proposition}[Moment bounds for the baseline equation]\label{prop:baseline-moments}
Assume Assumption~\ref{ass:baseline}, and let $X$ be any c\`adl\`ag adapted solution of \eqref{eq:X} for which the stochastic integrals are well defined. Then, for every $q\in(0,\alpha)$, there exists a constant $C=C(q,\alpha,K,T,c_\alpha)>0$ such that
\[
  \sup_{0\le t\le T}\E |X_t|^q\le C(1+|x_0|^q).
\]
\end{proposition}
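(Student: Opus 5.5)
We argue with a smooth concave-type Lyapunov function rather than $|x|^q$ directly. Fix $q\in(0,\alpha)$ and set $f(x):=(1+x^2)^{q/2}$. Then $f\in C^2$, $|x|^q\le f(x)\le 1+|x|^q$, and
\[
  |f'(x)|\le q(1+x^2)^{(q-1)/2},\qquad |f''(x)|\le C_q(1+x^2)^{(q-2)/2}.
\]
The plan is to apply It\^o's formula to $f(X_{t\wedge\tau_n})$, where $\tau_n:=\inf\{t:|X_t|\ge n\}\wedge T$. We then show that the expected compensator is bounded by $C\int_0^t(1+\E f(X_{s\wedge\tau_n}))\dd s$, and close with Gronwall and Fatou as $n\to\infty$. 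Since $\tau_n\uparrow T$ for a c\`adl\`ag process and $X_{t\wedge\tau_n}\to X_t$ a.s., Fatou gives the bound for $\E|X_t|^q$ uniformly in $t$.

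\textbf{Step 1: It\^o's formula and the local terms.} It\^o's formula gives
\[
  f(X_t)=f(x_0)+M_t+\int_0^t\Bigl[f'(X_s)b+\tfrac12 f''(X_s)a^2+\mathcal J_s\Bigr]\dd s,
\]
where
\[
  \mathcal J_s=\int\bigl[f(X_{s-}+\sigma z)-f(X_{s-})-f'(X_{s-})\sigma z\bigr]\nu(\md z),
\]
with coefficients evaluated at $(s,X_{s-})$. Here $M$ collects the Brownian integral and the compensated jump integral. On $[0,\tau_n]$ these are true martingales after a further localization if needed, and their expectations vanish.

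The drift and Brownian terms are handled by linear growth:
\[
  |f'(x)b(s,x)|\le qK(1+|x|)(1+x^2)^{(q-1)/2}\le C f(x),\qquad
  |f''(x)|a^2\le C(1+x^2)^{(q-2)/2}(1+|x|)^2\le Cf(x).
\]

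\textbf{Step 2: the jump term (main obstacle).} Write $\sigma=\sigma(s,x)$ and split the $z$-integral at $|\sigma z|=1+|x|$.

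On the small-jump region, use Taylor's formula with $|f''|\le C_q(1+(|x|-|\sigma z|)_+^2)^{(q-2)/2}$. A cruder route also works: since $q<2$, $\sup|f''|\le C_q$. With this, the small-jump part is at most
\[
  C\sigma^2\int_{|z|\le(1+|x|)/|\sigma|}z^2\nu(\md z)\le C|\sigma|^\alpha(1+|x|)^{2-\alpha}(1+|x|)^{q-2}.
\]
To get the factor $(1+|x|)^{q-2}$, use $f''$ on the segment, where $|y|\ge|x|-|\sigma z|$. More cleanly, one can instead split at $|\sigma z|=(1+|x|)/2$, so that $1+|y|\ge(1+|x|)/2$ on the segment. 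Using $|\sigma|\le K(1+|x|)$, the small-jump part is then $\le C(1+|x|)^{q}\le Cf(x)$.

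On the large-jump region $|\sigma z|>(1+|x|)/2$:

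\emph{Zero-order part.} Subadditivity of $|\cdot|^q$ for $q\le1$, or $f(x+h)\le C(f(x)+|h|^q)$ in general, bounds $|f(x+\sigma z)-f(x)|$ by $C(1+|x|)^q+C|\sigma z|^q$. Integrating $|\sigma z|^q$ against $\nu$ over $|z|>(1+|x|)/(2|\sigma|)$ converges precisely because $q<\alpha$. It gives
\[
  C|\sigma|^\alpha(1+|x|)^{q-\alpha}\le C(1+|x|)^q.
\]

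\emph{First-order part.} The term $f'(x)\sigma z$ is integrated over a symmetric region, so it vanishes by the symmetric compensation convention of the paper.

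Combining both regions yields $|\mathcal J_s|\le Cf(X_{s-})$. This step is the key point where $q<\alpha$ enters, and it is where care with the truncation level proportional to $(1+|x|)/|\sigma|$ is needed. If $\sigma(s,x)=0$, then $\mathcal J_s=0$ trivially.

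\textbf{Step 3: Gronwall.} Taking expectations of the stopped identity gives
\[
  \E f(X_{t\wedge\tau_n})\le f(x_0)+C\int_0^t\E f(X_{s\wedge\tau_n})\dd s.
\]
Gronwall's inequality then yields $\E f(X_{t\wedge\tau_n})\le e^{CT}(1+|x_0|^q)$. Letting $n\to\infty$ via Fatou gives the claim, with $C$ depending only on $q,\alpha,K,T,c_\alpha$.
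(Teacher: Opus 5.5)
Your proposal is correct and follows the paper's proof essentially step for step. You use the same Lyapunov function $(1+x^2)^{q/2}$, the same stopping at $|X|\ge n$, and the same split of the image jump variable $w=\sigma z$ at $\rho(x)=(1+|x|)/2$: small jumps are handled by $|f''|\le C(1+|x|)^{q-2}$ on the Taylor segment, large jumps by $f(x+h)\le C(f(x)+|h|^q)$ together with $q<\alpha$, the first-order term is killed by symmetry, and the argument closes with Gronwall and Fatou. The one slip is your aside that the crude bound $\sup|f''|\le C_q$ ``also works''; it only gives $C|\sigma|^\alpha(1+|x|)^{2-\alpha}\le C(1+|x|)^2$, which is not dominated by $f$ when $q<2$, but the refined split you settle on is exactly the one the paper uses.
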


\begin{proof}
Fix $q\in(0,\alpha)$ and set
\[
  V(x):=(1+x^2)^{q/2}.
\]
Then $V\in C^2(\R)$, and there exist constants $0<c<C$ such that
\[
  c(1+|x|^q)\le V(x)\le C(1+|x|^q),
\]
and
\begin{equation}\label{eq:V-derivatives}
  |V'(x)|\le C(1+|x|)^{q-1},\qquad
  |V''(x)|\le C(1+|x|)^{q-2}.
\end{equation}
Since $q<\alpha<2$, the exponent $q-2$ is negative. If
\[
  |\xi-x|\le \rho(x):=\frac12(1+|x|),
\]
then $1+|\xi|\ge(1+|x|)/2$, and hence
\begin{equation}\label{eq:V-local-second}
  |V''(\xi)|\le C(1+|x|)^{q-2}.
\end{equation}

Let
\[
  \vartheta_N:=\inf\{t\in[0,T]:|X_t|\ge N\}\wedge T.
\]
For $(s,x)\in[0,T]\times\R$, define the image L\'evy measure
\[
  \nu_{s,x}(\md w):=c_\alpha|\sigma(s,x)|^\alpha |w|^{-1-\alpha}\,\md w
\]
and the jump part of the generator by
\begin{equation}\label{eq:Jsigma-definition}
J_\sigma V(s,x):=
\int_{\R\setminus\{0\}}
\bigl[V(x+w)-V(x)-wV'(x)\1_{\{|w|\le\rho(x)\}}\bigr]\nu_{s,x}(\md w).
\end{equation}
The replacement of the cutoff $1$ by $\rho(x)$ does not change the value: the difference is the integral of the odd function $wV'(x)$ over a symmetric annulus, which is separated from the origin and therefore absolutely integrable.

Applying It\^o's formula for jump processes to $V(X_{t\wedge\vartheta_N})$ gives a stopped local martingale plus the finite-variation part
\[
V(x_0)+\int_0^{t\wedge\vartheta_N}
\left[b(s,X_s)V'(X_s)+\frac12a(s,X_s)^2V''(X_s)+J_\sigma V(s,X_s)\right]\dd s.
\]
On $[0,\vartheta_N]$, the predictable coefficients are evaluated at $X_{s-}$, which is bounded by $N$ before the exit jump; hence the Brownian martingale is square-integrable. For the jump martingale, set
\begin{equation}\label{eq:FV-definition}
F_s^V(z):=V\bigl(X_{s-}+\sigma(s,X_{s-})z\bigr)-V(X_{s-}).
\end{equation}
On $[0,\vartheta_N]$, one has $|F_s^V(z)|\le C_N|z|$ for $|z|\le1$ and $|F_s^V(z)|\le C_N(1+|z|^q)$ for $|z|>1$. Since $q<\alpha$,
\[
\E\int_0^{T\wedge\vartheta_N}\int_{\R\setminus\{0\}}
\bigl(|F_s^V(z)|^2\wedge |F_s^V(z)|\bigr)\nu(\md z)\dd s<\infty.
\]
Thus both stopped stochastic integrals are true martingales and have expectation zero. Consequently,
\begin{equation}\label{eq:moment-ito-stopped}
\E V(X_{t\wedge\vartheta_N})
=V(x_0)+\E\int_0^{t\wedge\vartheta_N}
\left[b(s,X_s)V'(X_s)+\frac12a(s,X_s)^2V''(X_s)+J_\sigma V(s,X_s)\right]\dd s.
\end{equation}

The drift and Brownian terms are controlled by the uniform linear-growth assumption and \eqref{eq:V-derivatives}:
\[
|b(s,x)V'(x)|+\frac12|a(s,x)|^2|V''(x)|
\le C(1+|x|)^q\le C(1+V(x)).
\]
For the jump part, the small-jump region $|w|\le\rho(x)$ and Taylor's theorem give, by \eqref{eq:V-local-second},
\[
  |V(x+w)-V(x)-wV'(x)|
  \le C(1+|x|)^{q-2}w^2.
\]
Hence, uniformly in $s$,
\begin{align*}
\int_{|w|\le\rho(x)}&|V(x+w)-V(x)-wV'(x)|\nu_{s,x}(\md w)\\
&\le C(1+|x|)^{q-2}|\sigma(s,x)|^\alpha
    \int_{|w|\le\rho(x)}|w|^{1-\alpha}\dd w\\
&\le C(1+|x|)^{q-2}(1+|x|)^\alpha(1+|x|)^{2-\alpha}
 =C(1+|x|)^q.
\end{align*}
For $|w|>\rho(x)$, use
\[
  |V(x+w)-V(x)|\le C\{(1+|x|)^q+|w|^q\}.
\]
Since $q<\alpha$,
\begin{align*}
|\sigma(s,x)|^\alpha\int_{|w|>\rho(x)}|w|^q|w|^{-1-\alpha}\dd w
&\le C(1+|x|)^\alpha\rho(x)^{q-\alpha}
 \le C(1+|x|)^q,\\
(1+|x|)^q|\sigma(s,x)|^\alpha\int_{|w|>\rho(x)}|w|^{-1-\alpha}\dd w
&\le C(1+|x|)^q.
\end{align*}
Therefore
\[
 b(s,x)V'(x)+\frac12a(s,x)^2V''(x)+J_\sigma V(s,x)
 \le C(1+V(x))
\]
uniformly in $s$. Taking expectations in \eqref{eq:moment-ito-stopped}, using
$\1_{\{s\le\vartheta_N\}}(1+V(X_s))\le1+V(X_{s\wedge\vartheta_N})$, and applying Gronwall's inequality yields
\[
  \sup_{0\le t\le T}\E V(X_{t\wedge\vartheta_N})\le C(1+V(x_0)),
\]
with $C$ independent of $N$. Since a c\`adl\`ag path is bounded on $[0,T]$, one has $\vartheta_N=T$ for all sufficiently large $N$, almost surely. Fatou's lemma gives the result.
\end{proof}

\begin{corollary}[Admissible coefficient errors]\label{cor:admissible-distances}
Assume Assumptions~\ref{ass:baseline} and~\ref{ass:perturbed}, and let $X$ solve \eqref{eq:X}. Suppose that there exist measurable functions
\[
  h_b\in L^1(0,T),\qquad h_a\in L^2(0,T),\qquad h_\sigma\in L^\alpha(0,T)
\]
and exponents
\[
  \theta_b\in[0,1],\qquad \theta_a\in[0,\alpha/2),\qquad \theta_\sigma\in[0,1)
\]
such that, for all $(s,y)$,
\begin{align*}
|b(s,y)-\wt b(s,y)|&\le h_b(s)(1+|y|^{\theta_b}),\\
|a(s,y)-\wt a(s,y)|&\le h_a(s)(1+|y|^{\theta_a}),\\
|\sigma(s,y)-\wt\sigma(s,y)|&\le h_\sigma(s)(1+|y|^{\theta_\sigma}).
\end{align*}
Then $B,A,S<\infty$. More precisely, if $m_r:=\sup_{0\le t\le T}\E|X_t|^r$ for $r<\alpha$, then
\[
B\le \|h_b\|_{L^1(0,T)}(1+m_{\theta_b}),\quad
A^2\le 2\|h_a\|_{L^2(0,T)}^2(1+m_{2\theta_a}),\quad
S^\alpha\le 2^{\alpha-1}\|h_\sigma\|_{L^\alpha(0,T)}^\alpha(1+m_{\alpha\theta_\sigma}).
\]
\end{corollary}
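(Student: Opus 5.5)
The plan is to insert the three growth bounds into the definitions \eqref{eq:B-exp}--\eqref{eq:S-exp}, pull the time-dependent factors $h_b,h_a,h_\sigma$ out of the expectations, and control the remaining spatial moments uniformly in time by Proposition~\ref{prop:baseline-moments}. The exponent restrictions are exactly what that proposition needs: $\theta_b\le1<\alpha$, $2\theta_a<\alpha$, and $\alpha\theta_\sigma<\alpha$. Hence $m_{\theta_b}$, $m_{2\theta_a}$, $m_{\alpha\theta_\sigma}$ are all finite, each bounded by $C(1+|x_0|^r)$ with $r$ the corresponding exponent. When an exponent is $0$, we interpret $m_0=1$, and the bounds remain valid.

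For the drift, we bound pointwise $|b(s,X_s)-\wt b(s,X_s)|\le h_b(s)(1+|X_s|^{\theta_b})$. Taking expectations gives $\E|b-\wt b|\le |h_b(s)|(1+m_{\theta_b})$, and integrating over $[0,T]$ yields the stated bound on $B$.

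For the Brownian error, we use $(x+y)^2\le 2(x^2+y^2)$, so that $(1+|y|^{\theta_a})^2\le 2(1+|y|^{2\theta_a})$. This gives $A^2\le 2\|h_a\|_{L^2}^2(1+m_{2\theta_a})$.

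For the stable error, convexity gives $(1+u)^\alpha\le 2^{\alpha-1}(1+u^\alpha)$ with $u=|y|^{\theta_\sigma}$. Together with $\int_0^T|h_\sigma|^\alpha\dd s=\|h_\sigma\|_{L^\alpha}^\alpha$, this yields the bound on $S^\alpha$. Measurability of the integrands in $(s,\omega)$ follows from joint measurability of the coefficients and of the càdlàg adapted process $X$, so Tonelli's theorem justifies exchanging $\E$ and $\dd s$.

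The only step that needs care is the use of Proposition~\ref{prop:baseline-moments}, which requires exponents strictly less than $\alpha$. This is precisely why $\theta_a<\alpha/2$ is imposed: a linearly growing Brownian difference would need a second moment, and that fails under stable tails. Assumption~\ref{ass:perturbed} plays no role beyond guaranteeing measurability. Everything else is elementary.
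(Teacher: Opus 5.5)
Your proposal is correct and follows the paper's proof: insert the growth bounds, apply $(1+r)^2\le 2(1+r^2)$ and $(1+r)^\alpha\le 2^{\alpha-1}(1+r^\alpha)$, and invoke Proposition~\ref{prop:baseline-moments} for the exponents $\theta_b$, $2\theta_a$, $\alpha\theta_\sigma$, all of which are strictly below $\alpha$. Your remarks on the convention $m_0=1$ and on using Tonelli's theorem only make explicit details the paper leaves implicit.
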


\begin{proof}
Use $(1+r)^2\le2(1+r^2)$ and $(1+r)^\alpha\le2^{\alpha-1}(1+r^\alpha)$ for $r\ge0$, and then apply Proposition~\ref{prop:baseline-moments}. The required moments are finite because $\theta_b\le1<\alpha$, $2\theta_a<\alpha$, and $\alpha\theta_\sigma<\alpha$.
\end{proof}

\begin{remark}[Finite-distance criteria under stable tails]
The growth thresholds in Corollary~\ref{cor:admissible-distances} reflect the heavy-tail regime. If the baseline is $X_t=Z_t$ and $\wt a(s,y)=c y$ with $c\ne0$, then
\[
  A^2=c^2\int_0^T\E|Z_s|^2\dd s=\infty,
\]
because a symmetric $\alpha$-stable random variable has no moment of order $q\ge\alpha$. Similarly, a linear perturbation of the stable coefficient gives $S=\infty$. This example identifies the tail behavior encoded in the law-weighted distances. For bounded coefficient differences, the uniform bounds in Section~\ref{sec:consequences} give a directly verifiable sufficient condition.
\end{remark}

\begin{remark}[Coupled pairs in the examples]
The first example below invokes standard one-dimensional stable-SDE well-posedness results to construct a coupled pair. The subsequent mixed examples compute the distances $B,A,S$ for any synchronous coupled pair satisfying Assumption~\ref{ass:solutions}. This keeps the coefficient calculations separate from the existence theory for the full mixed equation.
\end{remark}

\begin{proposition}[A non-Lipschitz example in the non-critical regime]\label{prop:nonlipschitz-example}
Let $\wt\eta\in(1/\alpha,1)$ and fix $0<\lambda<1\wedge T^{-1/\alpha}$. Consider the time-homogeneous coefficients
\[
  b(t,x)=\wt b(t,x)\equiv0,
  \qquad
  a(t,x)=\wt a(t,x)\equiv0,
  \qquad
  \sigma(t,x)\equiv1,
  \qquad
  \wt\sigma(t,x)=1+\lambda\bigl(|x|^{\wt\eta}\wedge1\bigr).
\]
Then $\wt\sigma$ is bounded, bounded away from zero, and fails to be Lipschitz at the origin, while satisfying Assumption~\ref{ass:perturbed} with exponent $\wt\eta$. Moreover, there is a coupled pair of solutions driven by the same stable process $Z$ for which Assumption~\ref{ass:solutions} holds and
\[
  B=A=0,
  \qquad
  S^\alpha\le \lambda^\alpha T<\infty.
\]
Consequently, $S<1$, and the non-critical estimate of Theorem~\ref{thm:noncritical} applies to a non-Lipschitz stable coefficient. In particular, if $x_0=\wt x_0$, then
\[
  \sup_{0\le t\le T}\E|X_t-\wt X_t|^{\alpha-1}
  \le C\lambda^{\alpha-1/\wt\eta},
\]
where $C$ depends on $T,\alpha,\wt\eta$ and the fixed coefficient bounds, but not on $\lambda\in(0,1\wedge T^{-1/\alpha})$.
\end{proposition}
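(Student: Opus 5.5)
The proof verifies the properties of $\wt\sigma$, constructs the coupled pair, computes the distances, and then applies Theorem~\ref{thm:noncritical}.

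\textbf{Properties of $\wt\sigma$.} Since $1\le\wt\sigma\le1+\lambda$, the coefficient is bounded and bounded away from zero. At the origin, $\wt\sigma(x)-\wt\sigma(0)=\lambda|x|^{\wt\eta}$ for $|x|\le1$. Because $\wt\eta<1$, the ratio $|x|^{\wt\eta}/|x|$ is unbounded, so $\wt\sigma$ is not Lipschitz there.

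\textbf{Assumption~\ref{ass:perturbed}.} The function $g(x)=|x|^{\wt\eta}\wedge1$ is $\wt\eta$-H\"older with constant $1$. This follows from the subadditivity $\bigl||x|^{\wt\eta}-|y|^{\wt\eta}\bigr|\le|x-y|^{\wt\eta}$ together with the fact that truncation by $\wedge1$ is $1$-Lipschitz. Moreover $|g(x)-g(y)|\le1$. Combining the two bounds,
\[
|\wt\sigma(x)-\wt\sigma(y)|\le\lambda\bigl(|x-y|^{\wt\eta}\wedge1\bigr)\le\lambda\bigl(|x-y|^{\wt\eta}\vee|x-y|\bigr).
\]
So \eqref{eq:sigma-holder} holds with the constant modulus $\ell_\sigma\equiv\lambda$. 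The drift and Brownian coefficients vanish identically, so \eqref{eq:b-lip} and \eqref{eq:a-lip} hold with $\ell_b=\ell_a=0$. Assumption~\ref{ass:baseline} is immediate.

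\textbf{The coupled pair.} Take $X_t=x_0+Z_t$. For $\wt X$, solve $\md\wt X_t=\wt\sigma(\wt X_{t-})\dd Z_t$ on the same space. The coefficient $\wt\sigma$ is time-homogeneous, bounded, bounded below, and $\wt\eta$-H\"older with $\wt\eta>1/\alpha$. Under these conditions Komatsu's theorem and its refinements (Bass--Burdzy--Chen, Fournier \cite{Komatsu1982,BassBurdzyChen2004,Fournier2013}) give pathwise uniqueness. Weak existence holds for bounded continuous nondegenerate coefficients. The Yamada--Watanabe principle then yields a unique strong solution driven by the given $Z$. This pathwise well-posedness is the only genuinely external input, and I would cite it rather than reprove it.

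\textbf{Distances.} Clearly $B=A=0$. Since $|\sigma-\wt\sigma|=\lambda(|y|^{\wt\eta}\wedge1)\le\lambda$,
\[
S^\alpha\le\int_0^T\lambda^\alpha\dd s=\lambda^\alpha T<1,
\]
where the final inequality uses $\lambda<T^{-1/\alpha}$. In particular $S<1$, which is the smallness regime of Theorem~\ref{thm:noncritical}.

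\textbf{The rate.} With $x_0=\wt x_0$ and $B=A=0$, the optimized non-critical rate from Theorem~\ref{thm:noncritical} reduces to a power of $S$. I expect that power to be $S^{\alpha-1/\wt\eta}$. As a check, optimize the fundamental estimate directly. The two competing terms are $S^\alpha/\eps$ and the Hölder error; setting $S^\alpha/\eps=\eps^{\alpha\wt\eta-1}$ gives $\eps=S^{1/\wt\eta}$ and the value $S^{\alpha-1/\wt\eta}$. The remaining $\eps^{\alpha-1}$ terms are dominated, because $\alpha-1\ge\alpha\wt\eta-1$, and $\delta$ can be fixed for $\wt\eta>1/\alpha$.

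Substituting $S\le\lambda T^{1/\alpha}$ and using monotonicity in $S$ gives $C\lambda^{\alpha-1/\wt\eta}$. The exponent is positive since $\wt\eta>1/\alpha$. The constant $C$ depends on $T,\alpha,\wt\eta$ and on $\|\ell_\sigma\|_{L^\alpha}=\lambda T^{1/\alpha}\le1$. This bound is uniform in $\lambda$, so $C$ does not depend on $\lambda$.

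\textbf{Main obstacle.} The expected difficulty is confirming the exact exponent produced by Theorem~\ref{thm:noncritical} with $B=A=0$. In particular, I need to check that the theorem's constants depend on $\ell_\sigma$ only through an upper bound on its norm, so that uniformity in $\lambda$ holds.
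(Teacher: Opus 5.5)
Your proposal is correct and follows the paper's proof step by step: verify the H\"older modulus of $\wt\sigma$, take $X_t=x_0+Z_t$ and obtain $\wt X$ from Komatsu/Fournier well-posedness, compute $B=A=0$ and $S^\alpha\le\lambda^\alpha T<1$, and read off $S^{\alpha-1/\wt\eta}\le(\lambda T^{1/\alpha})^{\alpha-1/\wt\eta}$ from Theorem~\ref{thm:noncritical}. The uniformity-in-$\lambda$ concern you flag goes away if you use the $\lambda$-independent modulus $\ell_\sigma\equiv1$, which is admissible because $\lambda<1$; also, Bass--Burdzy--Chen is a counterexample showing that pathwise uniqueness fails below $1/\alpha$, not a uniqueness refinement.
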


\begin{proof}
The function $x\mapsto |x|^{\wt\eta}\wedge1$ is $\wt\eta$-H\"older on $\R$ and fails to be Lipschitz at the origin, since $|h|^{\wt\eta}/|h|\to\infty$ as $h\to0$. Thus $1\le \wt\sigma\le1+\lambda$ and $\wt\sigma$ satisfies \eqref{eq:sigma-holder} with a constant modulus. The baseline solution is explicit, $X_t=x_0+Z_t$. For the perturbed equation
\[
  \wt X_t=\wt x_0+\int_0^t \wt\sigma(s,\wt X_{s-})\dd Z_s,
\]
the existence and pathwise uniqueness results for one-dimensional stable SDEs with bounded non-degenerate H\"older coefficients of exponent greater than $1/\alpha$ apply; see Komatsu~\cite{Komatsu1982} and Fournier~\cite{Fournier2013}. Thus the perturbed equation has a strong solution driven by the prescribed $Z$, and the two equations can be realized on the same probability space with the same driving process. Finally, $b=\wt b$ and $a=\wt a$, so $B=A=0$, while
\[
  |\sigma(s,y)-\wt\sigma(s,y)|\le \lambda
\]
for all $(s,y)\in[0,T]\times\R$, and therefore $S^\alpha\le \lambda^\alpha T$. If $x_0=\wt x_0$, Theorem~\ref{thm:noncritical} yields
\[
  \sup_{0\le t\le T}\E|X_t-\wt X_t|^{\alpha-1}
  \le C S^{\alpha-1/\wt\eta}
  \le C T^{(\alpha-1/\wt\eta)/\alpha}\lambda^{\alpha-1/\wt\eta},
\]
and the factor depending on $T$ is absorbed into $C$.

\end{proof}

\begin{proposition}[A bounded Brownian perturbation in a mixed equation]\label{prop:mixed-brownian-error}
Let $\wt\eta\in(1/\alpha,1)$, let $0<\lambda<1$, let $\kappa\ne0$, and let $r\in L^2(0,T)$ be deterministic. Consider
\[
  b(t,x)=\wt b(t,x)\equiv0,
  \qquad
  a(t,x)\equiv\kappa,
  \qquad
  \wt a(t,x)=\kappa+r(t),
\]
and
\[
  \sigma(t,x)=\wt\sigma(t,x)=1+\lambda\bigl(|x|^{\wt\eta}\wedge1\bigr).
\]
Then the baseline equation contains both the Brownian and stable terms, the perturbed equation is driven by the same two noises, and the stable coefficient is bounded, bounded away from zero, and non-Lipschitz at the origin. For any coupled solution pair satisfying Assumption~\ref{ass:solutions}, the coefficient distances are
\[
  B=0,
  \qquad
  A^2=\int_0^T |r(t)|^2\dd t,
  \qquad
  S=0.
\]
Consequently, if $x_0=\wt x_0$ and $A=\|r\|_{L^2(0,T)}<1$, Theorem~\ref{thm:noncritical} gives
\[
  \sup_{0\le t\le T}\E|X_t-\wt X_t|^{\alpha-1}
  \le C\|r\|_{L^2(0,T)}^{\frac{2(\alpha\wt\eta-1)}{\alpha\wt\eta-\alpha+2}}.
\]
Thus, in this mixed Brownian--stable equation, the estimate reduces to the Brownian diffusion error, while the stable coefficient is H\"older and fails to be Lipschitz.
\end{proposition}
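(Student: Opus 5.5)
This follows the pattern of Proposition~\ref{prop:nonlipschitz-example}: check the assumptions, compute the three distances directly, and then substitute them into Theorem~\ref{thm:noncritical}.

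\textbf{Structural claims and Assumption~\ref{ass:perturbed}.} The function $x\mapsto|x|^{\wt\eta}\wedge1$ is $\wt\eta$-H\"older on $\R$. Its increment quotient $|h|^{\wt\eta}/|h|$ blows up as $h\to0$, so it is not Lipschitz at the origin. Hence $1\le\sigma=\wt\sigma\le1+\lambda$, and $\wt\sigma$ satisfies \eqref{eq:sigma-holder} with constant modulus $\ell_\sigma\equiv\lambda$. Indeed, for $|x-y|\le1$ the H\"older bound gives $\lambda|x-y|^{\wt\eta}$, and for $|x-y|>1$ the difference is at most $\lambda\le\lambda|x-y|$. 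The drift $\wt b\equiv0$ satisfies \eqref{eq:b-lip} with $\ell_b=0$. The Brownian coefficient $\wt a(t,x)=\kappa+r(t)$ does not depend on $x$, so \eqref{eq:a-lip} holds with $\ell_a=0$; joint measurability holds because $r$ is measurable. Assumption~\ref{ass:baseline} holds trivially, since $|a|+|\sigma|\le|\kappa|+2$. Finally, $a\equiv\kappa\neq0$ and $\sigma\ge1$, so both noises genuinely appear in both equations.

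\textbf{Distances.} Take any coupled pair satisfying Assumption~\ref{ass:solutions}. Since $b=\wt b$ and $\sigma=\wt\sigma$ pointwise, the integrands in \eqref{eq:B-exp} and \eqref{eq:S-exp} vanish identically, so $B=S=0$. For the Brownian term, $|a(s,y)-\wt a(s,y)|^2=|r(s)|^2$ is deterministic and independent of $y$. Since $\mu_s^X$ is a probability measure, \eqref{eq:A-exp} gives $A^2=\int_0^T|r(s)|^2\dd s=\|r\|_{L^2(0,T)}^2$, which is finite because $r\in L^2(0,T)$. This computation uses nothing about the law of $X$.

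\textbf{Rate.} With $x_0=\wt x_0$, $B=S=0$, $A<1$ and $\wt\eta\in(1/\alpha,1)$, Theorem~\ref{thm:noncritical} applies, with the zero errors understood by the limiting convention mentioned in the introduction. I expect this to be the only delicate step. The exponent in that theorem is obtained by optimising the fundamental estimate, and I would re-derive it to make sure it matches. Drop the $B$ and $S$ terms. With $\delta$ fixed (say $\delta=2$), the remaining terms are
\[
\eps^{\alpha-1},\qquad A^2\eps^{-(3-\alpha)},\qquad \eps^{\alpha\wt\eta-1}.
\]
Since $\alpha\wt\eta-1<\alpha-1$, the term $\eps^{\alpha-1}$ is dominated by $\eps^{\alpha\wt\eta-1}$ for $\eps<1$. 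Balancing the other two, $A^2\eps^{-(3-\alpha)}=\eps^{\alpha\wt\eta-1}$, gives
\[
\eps=A^{2/(\alpha\wt\eta-\alpha+2)}.
\]
This choice lies in $(0,1)$ because $A<1$ and $\alpha\wt\eta-\alpha+2>0$. The resulting bound is
\[
C\,\eps^{\alpha\wt\eta-1}=C A^{2(\alpha\wt\eta-1)/(\alpha\wt\eta-\alpha+2)},
\]
which is the claimed rate after substituting $A=\|r\|_{L^2(0,T)}$. The constant $C$ depends on $T,\alpha,\wt\eta$ and the moduli, which here are fixed by $\lambda$ and $\ell_a=\ell_b=0$. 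If Theorem~\ref{thm:noncritical} is stated with a sum of powers of $B,A,S$, I will simply read off the $A$-term and check that its exponent equals this one.
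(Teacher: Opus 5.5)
Your proposal is correct and follows essentially the same route as the paper's proof. Like the paper, you check the H\"older/non-Lipschitz property of $1+\lambda(|x|^{\wt\eta}\wedge1)$ and the zero Lipschitz modulus of the spatially constant Brownian coefficients, compute $B=S=0$ and $A=\|r\|_{L^2(0,T)}$ directly from the definitions, and then read off the $A$-term of Theorem~\ref{thm:noncritical}.

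Your explicit balancing with $\delta=2$ and $\eps=A^{2/(\alpha\wt\eta-\alpha+2)}$ is exactly the choice $\eps_A$ made in that theorem's proof. No limiting convention is needed when $A>0$, since $0$ raised to a positive power vanishes inside the maximum; only the trivial case $r=0$ a.e.\ uses $\eps\downarrow0$.
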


\begin{proof}
The coefficient $x\mapsto1+\lambda(|x|^{\wt\eta}\wedge1)$ has the same H\"older and non-Lipschitz properties as in Proposition~\ref{prop:nonlipschitz-example}. The Brownian coefficients are spatially constant, hence Lipschitz with modulus zero. Since the drift coefficients coincide, $B=0$; the Brownian coefficient difference equals $r(t)$, so
\[
  A^2=\int_0^T|r(t)|^2\dd t,
\]
and since the stable coefficients coincide, $S=0$. The displayed estimate is the $A$-term in Theorem~\ref{thm:noncritical}.
\end{proof}

\begin{corollary}[Time-uniform probability estimate for the mixed example]\label{cor:mixed-example-probability}
In the setting of Proposition~\ref{prop:mixed-brownian-error}, assume $x_0=\wt x_0$ and $\|r\|_{L^2(0,T)}<1$. Then, for every $h>0$,
\[
  \mathbb P\left(\sup_{0\le t\le T}|X_t-\wt X_t|>h\right)
  \le
  \frac{C}{h^{\alpha-1}}
  \|r\|_{L^2(0,T)}^{\frac{2(\alpha\wt\eta-1)}{\alpha\wt\eta-\alpha+2}}.
\]
Equivalently,
\[
  \mathbb P\left(\sup_{0\le t\le T}|X_t-\wt X_t|^{\alpha-1}>h\right)
  \le
  \frac{C}{h}
  \|r\|_{L^2(0,T)}^{\frac{2(\alpha\wt\eta-1)}{\alpha\wt\eta-\alpha+2}}.
\]
The constant is independent of $h$ and $r$; for $0<\lambda<1$ it can be chosen uniformly in $\lambda$.
\end{corollary}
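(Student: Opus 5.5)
The corollary follows from combining the rate of Proposition~\ref{prop:mixed-brownian-error} with the time-uniform probability estimate. That estimate is announced in the introduction and proved in \Cref{sec:probability} through stopped quasi-martingales; I assume it has the form
\[
  \mathbb P\Bigl(\sup_{0\le t\le T}|X_t-\wt X_t|>h\Bigr)\le \frac{C}{h^{\alpha-1}}\,\mathcal R(B,A,S),
\]
where $\mathcal R$ is the same rate function that appears in Theorem~\ref{thm:noncritical}. The plan has three steps.

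First, specialize to the example. In the setting of Proposition~\ref{prop:mixed-brownian-error}, with $x_0=\wt x_0$, we have $B=S=0$ and $A=\|r\|_{L^2(0,T)}<1$. So only the $A$-term of the rate survives, namely $A^{2(\alpha\wt\eta-1)/(\alpha\wt\eta-\alpha+2)}$. This is consistent with the expectation bound already recorded in Proposition~\ref{prop:mixed-brownian-error}.

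Second, if the probability theorem is not available in exactly this form, I would rederive it from the localized argument as follows.
\begin{itemize}
\item Apply It\^o's formula to $u_{\delta,\eps}(Y_t)$, where $u_{\delta,\eps}$ is the Komatsu mollification of $|x|^{\alpha-1}$.
\item Split the finite-variation part into a nonpositive part and a remainder. The remainder is controlled by the same drift, Brownian-correction and jump estimates used in the fundamental estimate.
\item Stop $u_{\delta,\eps}(Y)$ at $\tau_h=\inf\{t:|Y_t|>h\}\wedge T$. This makes it a nonnegative quasi-martingale with bounded total expected variation of the remainder.
\item Apply the classical maximal inequality. This gives $h^{\alpha-1}\,\mathbb P(\tau_h<T \text{ or } |Y_T|>h)$ bounded by the initial term plus the remainder. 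Here one uses $u_{\delta,\eps}(x)\ge c|x|^{\alpha-1}-C\eps^{\alpha-1}$ outside the $\eps$-neighbourhood, and $\eps\le h$ may be assumed after enlarging $C$.
\item Optimize $(\delta,\eps)$ exactly as in the proof of Theorem~\ref{thm:noncritical}. This reproduces the rate in $A$.
\end{itemize}

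Third, the second displayed form is just a reformulation. The events $\{\sup|Y_t|^{\alpha-1}>h\}$ and $\{\sup|Y_t|>h^{1/(\alpha-1)}\}$ coincide, so applying the first bound with $h$ replaced by $h^{1/(\alpha-1)}$ gives the second.

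For the uniformity of the constant, I would track what the constants in the fundamental estimate depend on. They depend only on $T,\alpha,\wt\eta$, the moduli $\ell_b,\ell_a,\ell_\sigma$, and the mollifier profile $\varphi$. Here $\ell_b=\ell_a=0$. The stable coefficient $1+\lambda(|x|^{\wt\eta}\wedge1)$ has H\"older modulus bounded by $2$ uniformly in $\lambda\in(0,1)$; the bound $2$ also covers the linear branch of \eqref{eq:sigma-holder}. So $C$ is uniform in $\lambda$, and it does not depend on $h$ or $r$.

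The main obstacle is the quasi-martingale step. One must make sure that the stopping at $\tau_h$ together with the localization stopping times still gives a finite, $h$-independent bound on the expected total variation of the remainder. One must also handle the case $h<\eps$. I would handle small $h$ by noting that the bound is trivial when its right-hand side exceeds $1$, and otherwise choosing $\eps$ below the optimized value.
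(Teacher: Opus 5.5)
Your proposal is correct and follows the paper's proof: apply Theorem~\ref{thm:probability} with $x_0=\wt x_0$, $B=S=0$ and $A=\|r\|_{L^2(0,T)}<1$, and relate the two displays by the monotone substitution $h\leftrightarrow h^{\alpha-1}$ (you substitute in one direction, the paper in the other). Theorem~\ref{thm:probability} is already stated in exactly the form you assume, so your fallback rederivation in the second step is unnecessary, and your check of the constants via $\ell_b=\ell_a=0$ and a $\lambda$-independent H\"older modulus for $\wt\sigma$ justifies the uniformity in $h$, $r$ and $\lambda$ that the paper asserts without comment.
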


\begin{proof}
This is Theorem~\ref{thm:probability} applied to Proposition~\ref{prop:mixed-brownian-error}, where $B=0$, $A=\|r\|_{L^2(0,T)}$, and $S=0$. The first display follows from the second by replacing $h$ with $h^{\alpha-1}$.
\end{proof}

\begin{proposition}[A mixed example with all three coefficient distances]\label{prop:all-errors-example}
Let $\wt\eta\in(1/\alpha,1)$, let $0<\lambda<1$, let $\varpi>0$, and let $b_0,a_0\in\R$. Let $\beta\in L^1(0,T)$ and $r\in L^2(0,T)$ be deterministic. Consider
\[
  b(t,x)\equiv b_0,
  \qquad
  \wt b(t,x)=b_0+\beta(t),
\]
\[
  a(t,x)\equiv a_0,
  \qquad
  \wt a(t,x)=a_0+r(t),
\]
and
\[
  \sigma(t,x)=1+\lambda\bigl(|x|^{\wt\eta}\wedge1\bigr),
  \qquad
  \wt\sigma(t,x)=1+\varpi+\lambda\bigl(|x|^{\wt\eta}\wedge1\bigr).
\]
Then both stable coefficients are bounded, bounded away from zero, and H\"older while failing to be Lipschitz at the origin. For any coupled solution pair satisfying Assumption~\ref{ass:solutions}, the coefficient distances satisfy
\[
  B=\|\beta\|_{L^1(0,T)},
  \qquad
  A=\|r\|_{L^2(0,T)},
  \qquad
  S=\varpi T^{1/\alpha}.
\]
Consequently, if $x_0=\wt x_0$ and these three quantities are smaller than one, then Theorem~\ref{thm:noncritical} gives
\[
\sup_{0\le t\le T}\E|X_t-\wt X_t|^{\alpha-1}
\le C\max\left\{
\|\beta\|_{L^1(0,T)}^{\frac{\alpha\wt\eta-1}{\alpha\wt\eta-\alpha+1}},
\|r\|_{L^2(0,T)}^{\frac{2(\alpha\wt\eta-1)}{\alpha\wt\eta-\alpha+2}},
(\varpi T^{1/\alpha})^{\alpha-1/\wt\eta}
\right\}.
\]
\end{proposition}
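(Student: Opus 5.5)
The proof follows the pattern of Propositions~\ref{prop:nonlipschitz-example} and~\ref{prop:mixed-brownian-error}. It has three parts: regularity of the coefficients, computation of the three distances, and insertion into Theorem~\ref{thm:noncritical}.

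\textbf{Regularity.} Write $g(x):=|x|^{\wt\eta}\wedge1$. This function is bounded by $1$, is $\wt\eta$-H\"older on $\R$ with constant $1$, and is not Lipschitz at the origin because $g(h)/h=h^{\wt\eta-1}\to\infty$ as $h\downarrow0$. Hence the bounds
\[
1\le\sigma\le1+\lambda,\qquad 1+\varpi\le\wt\sigma\le1+\varpi+\lambda
\]
hold, so both stable coefficients are bounded and bounded away from zero. Both inherit the H\"older and non-Lipschitz properties of $g$. For $|x-y|\le1$ we have $|x-y|^{\wt\eta}\le|x-y|^{\wt\eta}\vee|x-y|$. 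For $|x-y|>1$ the difference is at most $\lambda\le|x-y|$. Therefore \eqref{eq:sigma-holder} holds with the constant modulus $\ell_\sigma\equiv\lambda\in L^\alpha(0,T)$. The perturbed drift and Brownian coefficients are spatially constant, so \eqref{eq:b-lip} and \eqref{eq:a-lip} hold with $\ell_b=\ell_a\equiv0$. The baseline coefficients are bounded in $x$ and in $t$, so Assumption~\ref{ass:baseline} holds.

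\textbf{Distances.} The coefficient differences are deterministic and do not depend on the space variable: $b-\wt b=-\beta(t)$, $a-\wt a=-r(t)$ and $\sigma-\wt\sigma\equiv-\varpi$. Consequently the expectations in \eqref{eq:B-exp}--\eqref{eq:S-exp} do not depend on the law $\mu^X_s$. They are therefore valid for any coupled pair satisfying Assumption~\ref{ass:solutions}. Evaluating them gives
\[
B=\int_0^T|\beta|\dd t,\qquad A^2=\int_0^T|r|^2\dd t,\qquad S^\alpha=\varpi^\alpha T,
\]
which yields $S=\varpi T^{1/\alpha}$.

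\textbf{Rate.} With $x_0=\wt x_0$ and $B,A,S<1$, Theorem~\ref{thm:noncritical} bounds the left-hand side by $C$ times the sum of the three power terms. The exponents are the same as those already used for the individual terms. In Proposition~\ref{prop:mixed-brownian-error} the $A$-exponent is $2(\alpha\wt\eta-1)/(\alpha\wt\eta-\alpha+2)$. In Proposition~\ref{prop:nonlipschitz-example} the $S$-exponent is $\alpha-1/\wt\eta$. The $B$-exponent is $(\alpha\wt\eta-1)/(\alpha\wt\eta-\alpha+1)$. A sum of three nonnegative terms is at most three times their maximum. Absorbing the factor $3$ into $C$ gives the stated maximum.

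The only step that requires care is reading the exact exponents off Theorem~\ref{thm:noncritical} in the form in which that theorem is stated, whether as a sum or a maximum. This step causes no difficulty. As a consistency check, the $B$-exponent can be recovered from the fundamental estimate. Set $\delta=2$ and balance $B\eps^{\alpha-2}$ against $\eps^{\alpha\wt\eta-1}$. This gives $\eps=B^{1/(\alpha\wt\eta-\alpha+1)}$, and hence the rate $B^{(\alpha\wt\eta-1)/(\alpha\wt\eta-\alpha+1)}$.
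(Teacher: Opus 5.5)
Your proposal is correct and follows the same route as the paper. You check the H\"older and non-Lipschitz properties of $x\mapsto|x|^{\wt\eta}\wedge1$, note that the three coefficient differences are spatially constant so $B,A,S$ do not depend on the law of $X$, and then read off the rate from Theorem~\ref{thm:noncritical}. Your explicit verification of \eqref{eq:sigma-holder} with $\ell_\sigma\equiv\lambda$ is a useful addition; the sum-to-max step is unnecessary, since Theorem~\ref{thm:noncritical} is already stated with a maximum, but it does no harm.
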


\begin{proof}
The H\"older and non-Lipschitz assertions follow as in Proposition~\ref{prop:nonlipschitz-example}. The drift and Brownian coefficients are spatially constant, so their moduli vanish. Along the baseline process,
\[
  |b(t,X_t)-\wt b(t,X_t)|=|\beta(t)|,
  \qquad
  |a(t,X_t)-\wt a(t,X_t)|=|r(t)|,
\]
and
\[
  |\sigma(t,X_t)-\wt\sigma(t,X_t)|=\varpi.
\]
Thus $B=\|\beta\|_{L^1(0,T)}$, $A=\|r\|_{L^2(0,T)}$, and $S=\varpi T^{1/\alpha}$. The estimate follows from Theorem~\ref{thm:noncritical}.
\end{proof}

\begin{proposition}[A law-weighted Brownian error with sublinear spatial growth]\label{prop:mixed-tail-brownian-error}
Let $\wt\eta\in(1/\alpha,1)$, $0<\lambda<1$, $\kappa\ne0$, and let $0<\theta<\alpha/2$. Put
\[
  \phi_\theta(x):=(1+x^2)^{\theta/2},\qquad x\in\R,
\]
and let $r\in L^2(0,T)$ be deterministic. Consider
\[
  b(t,x)=\wt b(t,x)\equiv0,
  \qquad
  a(t,x)\equiv\kappa,
  \qquad
  \wt a(t,x)=\kappa+r(t)\phi_\theta(x),
\]
and
\[
  \sigma(t,x)=\wt\sigma(t,x)=1+\lambda\bigl(|x|^{\wt\eta}\wedge1\bigr).
\]
Then the stable coefficient is bounded, bounded away from zero, and H\"older while failing to be Lipschitz at the origin. The perturbed Brownian coefficient is spatially Lipschitz. For any coupled solution pair satisfying Assumption~\ref{ass:solutions},
\[
  B=0,
  \qquad
  A^2=\int_0^T |r(t)|^2\,\E \phi_\theta(X_t)^2\dd t,
  \qquad
  S=0.
\]
Moreover, $A<\infty$ and
\[
  A^2\le C_{\theta,T}\|r\|_{L^2(0,T)}^2,
\]
where $C_{\theta,T}$ depends on the baseline moment $\sup_{0\le t\le T}\E|X_t|^{2\theta}$, which is finite by Proposition~\ref{prop:baseline-moments}. Consequently, for every $R>0$ there exists $C_R>0$ such that, whenever $\|r\|_{L^2(0,T)}\le R$, $x_0=\wt x_0$, and $A<1$ (for instance, for sufficiently small $\|r\|_{L^2(0,T)}$),
\[
  \sup_{0\le t\le T}\E|X_t-\wt X_t|^{\alpha-1}
  \le C_R\|r\|_{L^2(0,T)}^{\frac{2(\alpha\wt\eta-1)}{\alpha\wt\eta-\alpha+2}}.
\]
\end{proposition}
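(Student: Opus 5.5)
The proof follows the pattern of Propositions~\ref{prop:mixed-brownian-error} and~\ref{prop:all-errors-example}, with one extra ingredient: the baseline moment bound of Proposition~\ref{prop:baseline-moments}.

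\emph{Coefficient properties.} The stable coefficient $1+\lambda(|x|^{\wt\eta}\wedge1)$ is handled exactly as in Proposition~\ref{prop:nonlipschitz-example}. It lies in $[1,1+\lambda]$, it is $\wt\eta$-H\"older with constant modulus, and it fails to be Lipschitz at the origin. For the perturbed Brownian coefficient I compute
\[
  \phi_\theta'(x)=\theta x(1+x^2)^{\theta/2-1}.
\]
Since $0<\theta<\alpha/2<1$, this gives $|\phi_\theta'(x)|\le\theta(1+x^2)^{(\theta-1)/2}\le\theta$. Hence $|\wt a(t,x)-\wt a(t,y)|\le\theta|r(t)||x-y|$, so \eqref{eq:a-lip} holds with $\ell_a=\theta|r|\in L^2(0,T)$. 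The baseline coefficients are bounded, so Assumption~\ref{ass:baseline} holds.

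\emph{Distances.} We have $b=\wt b$ and $\sigma=\wt\sigma$, so $B=S=0$. Along $X$,
\[
  |a(t,X_t)-\wt a(t,X_t)|=|r(t)|\phi_\theta(X_t),
\]
which gives the stated formula for $A^2$. Next, $\phi_\theta(x)^2=(1+x^2)^\theta\le 2^{\theta}(1+|x|^{2\theta})$, since $\theta<1$ makes the map $u\mapsto u^\theta$ subadditive up to a constant. Because $2\theta<\alpha$, Proposition~\ref{prop:baseline-moments} with $q=2\theta$ gives
\[
  \sup_t\E\phi_\theta(X_t)^2\le C(1+|x_0|^{2\theta})=:C_{\theta,T}.
\]
Therefore $A^2\le C_{\theta,T}\|r\|_{L^2}^2$. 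Alternatively, this step is Corollary~\ref{cor:admissible-distances} with $h_a=2^{\theta/2}|r|$ and $\theta_a=\theta$.

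\emph{Rate and main obstacle.} Apply Theorem~\ref{thm:noncritical} with $x_0=\wt x_0$, $B=S=0$ and $A<1$. This gives the bound $C A^{2(\alpha\wt\eta-1)/(\alpha\wt\eta-\alpha+2)}$, exactly as in Proposition~\ref{prop:mixed-brownian-error}. Inserting $A\le C_{\theta,T}^{1/2}\|r\|_{L^2}$ yields the claimed rate.

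The main obstacle is the uniformity of the constant. The constant in Theorem~\ref{thm:noncritical} depends on the norms of the moduli, here $\|\ell_a\|_{L^2}=\theta\|r\|_{L^2}$. The hypothesis $\|r\|_{L^2}\le R$ bounds this norm uniformly, which produces a constant $C_R$ independent of $r$. The factor $C_{\theta,T}$ depends only on $\theta$, $T$, $x_0$ and the baseline data, not on $r$, so it can be absorbed into $C_R$. Finally, the remark that $A<1$ holds for small $\|r\|_{L^2}$ follows directly from $A^2\le C_{\theta,T}\|r\|_{L^2}^2$.
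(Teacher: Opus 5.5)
Your proposal is correct and follows essentially the same route as the paper's proof. It bounds $\phi_\theta'$ to get an $L^2(0,T)$ Lipschitz modulus proportional to $|r|$, reads off $B=S=0$ and the formula for $A^2$, and controls $(1+x^2)^\theta$ by $C_\theta(1+|x|^{2\theta})$ using Proposition~\ref{prop:baseline-moments} with $q=2\theta<\alpha$. It then applies the $A$-term of Theorem~\ref{thm:noncritical}, with the constant made uniform via $\|\ell_a\|_{L^2(0,T)}\le\theta R$. Your additional details are all accurate: the explicit bound $\|\phi_\theta'\|_\infty\le\theta$, the check of Assumption~\ref{ass:baseline}, and the observation that $C_{\theta,T}$ is independent of $r$.
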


\begin{proof}
Since $0<\theta<\alpha/2<1$, the derivative
\[
  \phi_\theta'(x)=\theta x(1+x^2)^{\theta/2-1}
\]
is bounded on $\R$. Thus $\wt a(t,\cdot)$ is Lipschitz with modulus $|r(t)|\|\phi_\theta'\|_\infty$, which belongs to $L^2(0,T)$. The stable coefficient has the same H\"older and non-Lipschitz properties as in Proposition~\ref{prop:nonlipschitz-example}. Because the drift coefficients coincide, $B=0$. The Brownian coefficient difference is $r(t)\phi_\theta(x)$, which gives the displayed expression for $A^2$; because the stable coefficients coincide, $S=0$.

For $0<\theta<\alpha/2$,
\[
  \phi_\theta(x)^2=(1+x^2)^\theta\le C_\theta(1+|x|^{2\theta}).
\]
Proposition~\ref{prop:baseline-moments} gives $\sup_{0\le t\le T}\E|X_t|^{2\theta}<\infty$, because $2\theta<\alpha$. Therefore
\[
  A^2\le C_\theta\int_0^T |r(t)|^2\bigl(1+\sup_{0\le s\le T}\E|X_s|^{2\theta}\bigr)\dd t
  \le C_{\theta,T}\|r\|_{L^2(0,T)}^2.
\]
The estimate follows from the $A$-term in Theorem~\ref{thm:noncritical}; the constant is uniform for $\|r\|_{L^2(0,T)}\le R$ because the Lipschitz modulus of $\wt a$ is bounded by a constant multiple of $R$.
\end{proof}

\begin{remark}[Moment threshold in the Brownian error]
Proposition~\ref{prop:mixed-tail-brownian-error} shows how the quadratic Brownian coefficient distance interacts with the stable tails of the baseline law. The verification of $A<\infty$ uses the moment of order $2\theta$ of the baseline process. This is exactly the moment condition used in Corollary~\ref{cor:admissible-distances}. Thus the condition $2\theta<\alpha$ appears at the level of the law-weighted Brownian error, even though the Brownian coefficient remains spatially Lipschitz.
\end{remark}

\begin{remark}[Critical endpoint]
Propositions~\ref{prop:nonlipschitz-example},~\ref{prop:mixed-brownian-error},~\ref{prop:all-errors-example}, and~\ref{prop:mixed-tail-brownian-error} are stated in the non-critical range $\wt\eta>1/\alpha$. The critical estimate of Theorem~\ref{thm:critical} applies to any coupled pair satisfying Assumption~\ref{ass:solutions} at $\wt\eta=1/\alpha$.
\end{remark}

\section{Main results}
\label{sec:main-results}

The next proposition is the estimate used in all subsequent rate statements. The localization parameters $\delta$ and $\eps$ are kept explicit because different terms are optimized against different powers of these parameters. The drift and stable direct-error terms use the first derivative of the mollifier and the localized stable-generator identity, whereas the Brownian direct-error term uses the second derivative. This is why the $A$-term carries $\delta^2/\log\delta$, rather than $\delta/\log\delta$, in the localized estimate.

\begin{proposition}[Fundamental estimate with localization parameter]\label{prop:fundamental-delta}
Assume Assumptions~\ref{ass:baseline}--\ref{ass:solutions}. Then there exists a constant
\[
C=C\bigl(T,\alpha,\wt\eta,\|\ell_b\|_{L^1(0,T)},\|\ell_a\|_{L^2(0,T)},\|\ell_\sigma\|_{L^\alpha(0,T)}\bigr)
\]
independent of $\delta$ and $\eps$ such that, for every $\delta\ge2$ and $\eps\in(0,1)$,
\begin{align}
\sup_{0\le t\le T}\E|X_t-\wt X_t|^{\alpha-1}
\le C\Bigg[&|x_0-\wt x_0|^{\alpha-1}
+\eps^{\alpha-1}
+\frac{\delta}{\log\delta}\frac{B}{\eps^{2-\alpha}} \notag\\
&+\frac{\delta^2}{\log\delta}\frac{A^2}{\eps^{3-\alpha}}
+\frac{\delta}{\log\delta}\frac{S^\alpha}{\eps}
+\frac{\delta^2}{\log\delta}\eps^{\alpha-1}
+\frac{\eps^{\alpha\wt\eta-1}}{\log\delta}
\Bigg]. \label{eq:fundamental-delta}
\end{align}
\end{proposition}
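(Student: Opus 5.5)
We follow Komatsu's scheme for the stable-only case, adding one new ingredient for the Brownian term. First we construct the log-annular mollifier. Fix a smooth profile $\varphi\ge0$ supported in $[1,2]$. Then take $\psi_{\delta,\eps}\ge0$ supported in the annulus $\{\eps/\delta\le|x|\le\eps\}$, with
\[
\int_\R\psi_{\delta,\eps}(x)|x|^{?}\dd x
\]
normalised so that $\psi_{\delta,\eps}(x)\lesssim \frac{1}{\log\delta}\,|x|^{-1}$; that is, $\psi$ is a $1/(\log\delta)$-normalised multiple of $|x|^{-1}$ on the annulus. Define
\[
u_{\delta,\eps}:=\RieszK*\psi_{\delta,\eps}\ \text{(suitably normalised)}.
\]
By the Riesz identity of Lemma~\ref{lem:stable-generator}, $L^\alpha u_{\delta,\eps}=C_\alpha\psi_{\delta,\eps}$. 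We will also need that $u_{\delta,\eps}(x)\ge |x|^{\alpha-1}-C\eps^{\alpha-1}$ and $u_{\delta,\eps}(x)\le |x|^{\alpha-1}+C\eps^{\alpha-1}$.

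The two analytic estimates we plan to prove are
\[
|u'_{\delta,\eps}(x)|\le C\frac{\delta}{\log\delta}\eps^{\alpha-2},
\qquad
|u''_{\delta,\eps}(x)|\le C\frac{\delta^2}{\log\delta}\eps^{\alpha-3}.
\]
They come from differentiating the convolution onto $\psi$, which has size $\delta/(\eps\log\delta)$ and derivative size $\delta^2/(\eps^2\log\delta)$ on a set of length $\eps$. The second-derivative bound is the new ingredient.

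Next, apply It\^o's formula to $u_{\delta,\eps}(Y_{t\wedge\tau_N})$ with stopping times $\tau_N$ as in Section~\ref{sec:localization}. The stopped martingale parts vanish in expectation, by the same square/first-moment integrability checks as in Proposition~\ref{prop:baseline-moments}. Three terms remain.

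\textbf{Drift.} Split $b(s,X_s)-\wt b(s,\wt X_s)=[b-\wt b](s,X_s)+[\wt b(s,X_s)-\wt b(s,\wt X_s)]$. The first part, times $|u'|$, gives $\frac{\delta}{\log\delta}\eps^{\alpha-2}B$. For the second part we use $|u'(x)|\le C|x|^{\alpha-2}$ (true globally for this smoothing), so it is bounded by $\ell_b(s)|Y_s|^{\alpha-1}$, a Gronwall term.

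\textbf{Brownian correction.} Write $(a-\wt a)^2\le 2[a-\wt a]^2(s,X_s)+2\ell_a(s)^2Y_s^2$. The first piece gives $\frac{\delta^2}{\log\delta}\eps^{\alpha-3}A^2$. For the second piece we need $|u''(x)|x^2\le C|x|^{\alpha-1}$, an analogous global bound, plus the mollification error. Wherever $|x|\lesssim\eps$ we use instead the crude bound $|u''|\eps^2$, which gives the $\frac{\delta^2}{\log\delta}\eps^{\alpha-1}$ term times $\|\ell_a\|_2^2$.

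\textbf{Stable term.} Since $Z$ is common, the jump of $Y$ is $(\sigma(s,X_{s-})-\wt\sigma(s,\wt X_{s-}))z$. Its compensator is therefore $|\sigma-\wt\sigma|^\alpha (L^\alpha u)(Y_s)=C|\sigma-\wt\sigma|^\alpha\psi_{\delta,\eps}(Y_s)$, by scaling of $\nu$. Use $|\sigma-\wt\sigma|^\alpha\le C|\sigma-\wt\sigma|^\alpha(s,X_s)+C\ell_\sigma^\alpha|Y_s|^{\alpha\wt\eta}$, valid on $\supp\psi$ where $|Y|<\eps\le 1$. Together with $\psi\le C\delta/(\eps\log\delta)$, the first part gives $\frac{\delta}{\log\delta}S^\alpha/\eps$. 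For the second part, $\psi(x)|x|^{\alpha\wt\eta}\le \frac{C}{\log\delta}|x|^{\alpha\wt\eta-1}\le \frac{C}{\log\delta}\eps^{\alpha\wt\eta-1}$, which gives the last term after integrating against $\ell_\sigma^\alpha$.

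Finally, collect the terms: $\E|Y_{t\wedge\tau_N}|^{\alpha-1}\le$ (error terms) $+C\int_0^t(\ell_b+\ell_a^2)(s)\,\E|Y_{s\wedge\tau_N}|^{\alpha-1}\dd s$. Apply Gronwall with the integrable weight $\ell_b+\ell_a^2$, then let $N\to\infty$ by Fatou.

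The main obstacle is the Brownian spatial-modulus term $u''(Y)\ell_a^2Y^2$. Here $u''$ is not sign-definite near the annulus, so we must verify the homogeneity-type bound $|u''_{\delta,\eps}(x)|x^2\le C(|x|^{\alpha-1}+\frac{\delta^2}{\log\delta}\eps^{\alpha-1})$ uniformly. We would attempt it by splitting into $|x|\ge2\eps$, where convolution commutes with differentiation and $|\RieszK''|x^2\lesssim|x|^{\alpha-1}$, and $|x|<2\eps$, where we use the sup bound on $u''$.
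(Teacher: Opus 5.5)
Your proposal is correct and follows essentially the paper's route: a log-annular mollifier with unit mass and $\psi_{\delta,\eps}(y)\le C/(|y|\log\delta)$, sup bounds on $u'_{\delta,\eps}$ and $u''_{\delta,\eps}$, the split at $|x|=2\eps$ for $|u''_{\delta,\eps}(x)|x^2$, the Riesz identity turning the stable compensator into $C_\alpha|\gamma_s|^\alpha\psi_{\delta,\eps}(Y_s)$, and stopped Gronwall followed by Fatou. The differences are minor: your global drift bound $|u'_{\delta,\eps}(x)|\le C|x|^{\alpha-2}$ is true but unproved in your sketch (split the convolution at $|y-x|=|x|/2$, using unit mass away from $x$ and $\psi_{\delta,\eps}(y)\le 2C/(|x|\log\delta)$ near $x$), and it is slightly sharper than the paper's product bound; also, before Gronwall you should record that $\E|Y_{t\wedge\tau_N}|^{\alpha-1}<\infty$, which the paper proves separately and which also follows from your expansion because $|Y_s|\le 2N$ for $s<\tau_N$.
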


In the non-critical case $\wt\eta>1/\alpha$, the last term in \eqref{eq:fundamental-delta} is a positive power of $\eps$. Thus $\delta$ can be fixed, and only $\eps$ has to be optimized.

\begin{theorem}[Non-critical power-rate estimate]\label{thm:noncritical}
Assume Assumptions~\ref{ass:baseline}--\ref{ass:solutions}. Suppose $B,A,S<1$.
If $\wt\eta\in(1/\alpha,1]$, then there exists $C>0$ such that
\begin{align}
\sup_{0\le t\le T}\E|X_t-\wt X_t|^{\alpha-1}
\le C\Bigg[&|x_0-\wt x_0|^{\alpha-1} \notag\\
&+\max\left\{
B^{\frac{\alpha\wt\eta-1}{\alpha\wt\eta-\alpha+1}},
A^{\frac{2(\alpha\wt\eta-1)}{\alpha\wt\eta-\alpha+2}},
S^{\alpha-1/\wt\eta}
\right\}\Bigg]. \label{eq:noncritical-rate}
\end{align}
\end{theorem}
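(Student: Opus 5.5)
The estimate follows from the fundamental estimate of Proposition~\ref{prop:fundamental-delta}. We fix the localization parameter at $\delta=2$ and then choose $\eps$ by balancing each of the three direct-error terms against the H\"older remainder $\eps^{\alpha\wt\eta-1}$. No further probabilistic argument is needed. All the work is in the elementary optimization, and in checking that the chosen $\eps$ lies in $(0,1)$.

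With $\delta=2$, the factors $\delta/\log\delta$ and $\delta^2/\log\delta$ are absolute constants, so \eqref{eq:fundamental-delta} becomes
\[
\sup_{0\le t\le T}\E|X_t-\wt X_t|^{\alpha-1}
\le C\Bigl[|x_0-\wt x_0|^{\alpha-1}+\eps^{\alpha-1}+\eps^{\alpha\wt\eta-1}
+B\eps^{\alpha-2}+A^2\eps^{\alpha-3}+S^\alpha\eps^{-1}\Bigr].
\]
Since $\wt\eta\le1$ and $\eps<1$, we have $\eps^{\alpha-1}\le\eps^{\alpha\wt\eta-1}$. Hence all $\eps$-only terms are bounded by $2\eps^{\alpha\wt\eta-1}$, which is a positive power of $\eps$ because $\wt\eta>1/\alpha$.

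Next define the balancing scales
\[
\eps_B:=B^{\frac{1}{\alpha\wt\eta-\alpha+1}},\qquad
\eps_A:=A^{\frac{2}{\alpha\wt\eta-\alpha+2}},\qquad
\eps_S:=S^{1/\wt\eta}.
\]
These solve, respectively,
\[
B\eps^{\alpha-2}=\eps^{\alpha\wt\eta-1},\qquad
A^2\eps^{\alpha-3}=\eps^{\alpha\wt\eta-1},\qquad
S^\alpha\eps^{-1}=\eps^{\alpha\wt\eta-1}.
\]
All three exponents are positive: $\alpha\wt\eta-\alpha+1>0$ and $\alpha\wt\eta-\alpha+2>0$ since $\alpha\wt\eta>1>\alpha-1$. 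Therefore $B,A,S<1$ gives $\eps_B,\eps_A,\eps_S<1$.

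Assume first that at least one of $B,A,S$ is positive, and set $\eps:=\max\{\eps_B,\eps_A,\eps_S\}\in(0,1)$. Each direct-error term is nonincreasing in $\eps$, so, for example,
\[
B\eps^{\alpha-2}\le B\eps_B^{\alpha-2}=\eps_B^{\alpha\wt\eta-1}\le\eps^{\alpha\wt\eta-1},
\]
and the same argument applies to the $A$- and $S$-terms. If one of $B,A,S$ vanishes, its term is zero and the argument is unchanged. Consequently the right-hand side is at most
\[
C\bigl[|x_0-\wt x_0|^{\alpha-1}+\eps^{\alpha\wt\eta-1}\bigr].
\]
Substituting the three possible values of the maximum gives
\[
\eps_B^{\alpha\wt\eta-1}=B^{\frac{\alpha\wt\eta-1}{\alpha\wt\eta-\alpha+1}},\qquad
\eps_A^{\alpha\wt\eta-1}=A^{\frac{2(\alpha\wt\eta-1)}{\alpha\wt\eta-\alpha+2}},\qquad
\eps_S^{\alpha\wt\eta-1}=S^{\alpha-1/\wt\eta},
\]
which is exactly \eqref{eq:noncritical-rate}.

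If $B=A=S=0$, the error terms vanish for every $\eps\in(0,1)$. Letting $\eps\downarrow0$ then yields the bound $C|x_0-\wt x_0|^{\alpha-1}$, which is consistent with the zero-error convention of the statement. The constant $C$ is the one from Proposition~\ref{prop:fundamental-delta} evaluated at $\delta=2$, times an absolute factor.

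The only delicate point is the admissibility of the chosen $\eps$. This needs $\wt\eta>1/\alpha$, so that the H\"older exponent is positive and dominates $\eps^{\alpha-1}$, together with the smallness hypothesis $B,A,S<1$.
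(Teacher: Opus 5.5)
Your proposal is correct and is essentially the paper's proof: fix $\delta=2$ in Proposition~\ref{prop:fundamental-delta}, bound $\eps^{\alpha-1}$ by $\eps^{\alpha\wt\eta-1}$, take $\eps=\max\{\eps_B,\eps_A,\eps_S\}$ with the same three balancing scales, and handle $B=A=S=0$ by letting $\eps\downarrow0$. Your added checks are details the paper leaves implicit: the positivity of $\alpha\wt\eta-\alpha+1$ and $\alpha\wt\eta-\alpha+2$, the fact that $\eps\in(0,1)$, and the case where only some of $B,A,S$ vanish.
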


At the endpoint $\wt\eta=1/\alpha$, the H\"older contribution is independent of $\eps$. The parameter $\delta$ is then optimized together with the coefficient error, which leads to a logarithmic rate.

\begin{theorem}[Critical logarithmic estimate]\label{thm:critical}
Assume Assumptions~\ref{ass:baseline}--\ref{ass:solutions} and $B,A,S<1$. Suppose $\wt\eta=1/\alpha$ and set
\[
  \Theta:=\max\{B,A,S\}.
\]
Then there exists $C>0$ such that, for $0<\Theta<1$,
\begin{equation}\label{eq:critical-rate}
\sup_{0\le t\le T}\E|X_t-\wt X_t|^{\alpha-1}
\le C\left[|x_0-\wt x_0|^{\alpha-1}+\left(\log\frac1\Theta\right)^{-1}\right].
\end{equation}
When $\Theta=0$, the right-hand side is understood as $C|x_0-\wt x_0|^{\alpha-1}$ by the limiting convention.
\end{theorem}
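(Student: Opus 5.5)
The plan is to specialize Proposition~\ref{prop:fundamental-delta} to $\wt\eta=1/\alpha$ and then choose both localization parameters as powers of $\Theta$. At the critical exponent we have $\alpha\wt\eta-1=0$, so the last term in \eqref{eq:fundamental-delta} becomes $1/\log\delta$. We also use $B\le\Theta$, $A^2\le\Theta^2$ and $S^\alpha\le\Theta^\alpha$. Dropping the harmless factor $1/\log\delta\le 1/\log 2$ where convenient, the bound becomes
\[
C\Big[|x_0-\wt x_0|^{\alpha-1}+\eps^{\alpha-1}+\delta\,\Theta\,\eps^{\alpha-2}+\delta^2\Theta^2\eps^{\alpha-3}+\delta\,\Theta^\alpha\eps^{-1}+\delta^2\eps^{\alpha-1}+\frac{1}{\log\delta}\Big].
\]
The idea is to let $\delta$ grow only like a small negative power of $\Theta$. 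Then $1/\log\delta\asymp(\log(1/\Theta))^{-1}$, while every other error term is a positive power of $\Theta$.

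Concretely, take $\eps=\Theta^{1/2}$ and $\delta=\Theta^{-q}$ with $q:=(\alpha-1)/8$. The exponents of $\Theta$ in the middle terms are then as follows.
\begin{itemize}
\item $\eps^{\alpha-1}$ gives $(\alpha-1)/2$.
\item $\delta\Theta\eps^{\alpha-2}$ gives $\alpha/2-q$.
\item $\delta^2\Theta^2\eps^{\alpha-3}$ gives $(1+\alpha)/2-2q$.
\item $\delta\Theta^\alpha\eps^{-1}$ gives $\alpha-1/2-q$.
\item $\delta^2\eps^{\alpha-1}$ gives $(\alpha-1)/2-2q=(\alpha-1)/4$.
\end{itemize}
Since $\alpha\in(1,2)$, each of these is at least $c:=(\alpha-1)/4>0$, and $\Theta<1$. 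Hence all middle terms are bounded by $5\Theta^{c}$. The elementary inequality $\Theta^{c}\le C_c(\log(1/\Theta))^{-1}$ on $(0,1)$ then bounds them by the logarithmic rate. Finally,
\[
\frac{1}{\log\delta}=\frac{1}{q\log(1/\Theta)},
\]
which yields \eqref{eq:critical-rate}.

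The only restriction to check is that Proposition~\ref{prop:fundamental-delta} requires $\delta\ge2$ and $\eps\in(0,1)$. The choice above is admissible when $\Theta\le\Theta_0:=2^{-1/q}$. For $\Theta\in(\Theta_0,1)$ we instead fix $\delta=2$ and $\eps=1/2$. The proposition then gives a bound $C[|x_0-\wt x_0|^{\alpha-1}+1]$, and since $(\log(1/\Theta))^{-1}\ge(\log(1/\Theta_0))^{-1}$ on this range, the constant $1$ is absorbed into $C(\log(1/\Theta))^{-1}$.

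For the case $\Theta=0$ we have $B=A=S=0$. Fixing $\delta$ and letting $\eps\downarrow0$ in \eqref{eq:fundamental-delta} leaves $C[|x_0-\wt x_0|^{\alpha-1}+1/\log\delta]$. Letting $\delta\to\infty$ gives $C|x_0-\wt x_0|^{\alpha-1}$, which is the limiting convention in the statement.

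There is no genuine obstacle once Proposition~\ref{prop:fundamental-delta} is available. The one delicate point is that the $\delta^2\eps^{\alpha-1}$ term and the Brownian term $\delta^2A^2\eps^{\alpha-3}$ force $\delta$ to grow only polynomially in $1/\Theta$. That polynomial growth still suffices because $\log\delta$ is then comparable to $\log(1/\Theta)$.
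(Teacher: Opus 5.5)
Your proof is correct and follows essentially the same route as the paper: specialize Proposition~\ref{prop:fundamental-delta} at $\alpha\wt\eta-1=0$, choose $\eps$ as a positive power of $\Theta$ and $\delta$ as a small negative power of $\Theta$ so that every term except $1/\log\delta$ is a positive power of $\Theta$, and absorb the range where $\delta\ge2$ fails into the constant. The paper instead takes $\eps=\Theta$ and $\delta=\max\{2,\Theta^{-(\alpha-1)/4}\}$, and for $\Theta=0$ it uses the diagonal choice $\delta=\max\{2,\eps^{-(\alpha-1)/4}\}$; your choices $\eps=\Theta^{1/2}$, $\delta=\Theta^{-(\alpha-1)/8}$ and your iterated limit ($\eps\downarrow0$ first, then $\delta\to\infty$) are equally valid cosmetic variants.
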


\begin{proof}[Proof of \Cref{thm:noncritical}]
If $\max\{B,A,S\}=0$, take $\delta=2$ in \eqref{eq:fundamental-delta} and let $\eps\downarrow0$ to obtain the estimate. We therefore assume $\max\{B,A,S\}>0$. Take $\delta=2$ in \eqref{eq:fundamental-delta}. Set $\beta:=\alpha\wt\eta-1>0$. Since $\wt\eta\le1$, one has $\beta\le\alpha-1$, and therefore $\eps^{\alpha-1}\le\eps^\beta$ for $\eps\in(0,1)$. The relevant terms are
\[
  \frac{B}{\eps^{2-\alpha}},\qquad
  \frac{A^2}{\eps^{3-\alpha}},\qquad
  \frac{S^\alpha}{\eps},\qquad
  \eps^\beta.
\]
Balancing each of the first three terms with $\eps^\beta$ gives
\[
  \eps_B=B^{1/(\alpha\wt\eta-\alpha+1)},\qquad
  \eps_A=A^{2/(\alpha\wt\eta-\alpha+2)},\qquad
  \eps_S=S^{1/\wt\eta}.
\]
Choosing $\eps=\max\{\eps_B,\eps_A,\eps_S\}$ in \eqref{eq:fundamental-delta} yields \eqref{eq:noncritical-rate}. Indeed, $\eps\ge\eps_B$ gives
\[
  \frac{B}{\eps^{2-\alpha}}\le \frac{B}{\eps_B^{2-\alpha}}=\eps_B^\beta\le \eps^\beta,
\]
and the estimates for the $A$- and $S$-terms are identical with $\eps_A$ and $\eps_S$. The term $\eps^\beta$ then gives the three powers displayed in \eqref{eq:noncritical-rate}.
\end{proof}

\begin{proof}[Proof of \Cref{thm:critical}]
Let $\wt\eta=1/\alpha$. Set
\[
  r:=\frac{\alpha-1}{4}.
\]
We first treat the zero-error case. Suppose $\Theta=0$, so that $B=A=S=0$. For $\eps\in(0,1)$ put
\[
  \delta:=\max\{2,\eps^{-r}\}.
\]
Then $\delta\to\infty$ as $\eps\downarrow0$. Applying \eqref{eq:fundamental-delta} with $B=A=S=0$ gives
\[
\sup_{0\le t\le T}\E|X_t-\wt X_t|^{\alpha-1}
\le C\left[|x_0-\wt x_0|^{\alpha-1}+\eps^{\alpha-1}
+\frac{\delta^2}{\log\delta}\eps^{\alpha-1}
+\frac{1}{\log\delta}\right].
\]
For small $\eps$, $\delta=\eps^{-r}$, and therefore
\[
  \frac{\delta^2}{\log\delta}\eps^{\alpha-1}
  =\frac{\eps^{\alpha-1-2r}}{r\log(1/\eps)}\longrightarrow0,
\]
because $\alpha-1-2r=(\alpha-1)/2>0$. Also $1/\log\delta\to0$ and $\eps^{\alpha-1}\to0$. Letting $\eps\downarrow0$ proves the limiting convention in the case $\Theta=0$.

For $\Theta\in(0,1)$ set
\[
  \eps:=\Theta,
  \qquad
  \delta:=\max\{2,\Theta^{-r}\}.
\]
If $\Theta^{-r}<2$, then $\Theta\ge 2^{-1/r}$ and $(\log(1/\Theta))^{-1}$ is bounded below by a positive constant depending only on $r$; the estimate is therefore absorbed by increasing the final constant. It remains to consider $\Theta^{-r}\ge2$, for which $\delta=\Theta^{-r}$. Since $B,A,S\le\Theta$, the terms in \eqref{eq:fundamental-delta} are bounded as follows:
\begin{align*}
\frac{\delta B}{\eps^{2-\alpha}\log\delta}
&\le \frac{\Theta^{\alpha-1-r}}{r\log(1/\Theta)},\\
\frac{\delta^2 A^2}{\eps^{3-\alpha}\log\delta}
&\le \frac{\Theta^{\alpha-1-2r}}{r\log(1/\Theta)},\\
\frac{\delta S^\alpha}{\eps\log\delta}
&\le \frac{\Theta^{\alpha-1-r}}{r\log(1/\Theta)},\\
\frac{\delta^2\eps^{\alpha-1}}{\log\delta}
&=\frac{\Theta^{\alpha-1-2r}}{r\log(1/\Theta)},\\
\frac{\eps^{\alpha\wt\eta-1}}{\log\delta}
&=\frac{1}{r\log(1/\Theta)}.
\end{align*}
The exponents $\alpha-1-r$ and $\alpha-1-2r$ are positive for this choice of $r$. Also, for any $p>0$, $\Theta^p\le C_p(\log(1/\Theta))^{-1}$ on $(0,1)$. Thus all remaining $\eps$-dependent terms are bounded by a constant multiple of $(\log(1/\Theta))^{-1}$. This proves \eqref{eq:critical-rate}.
\end{proof}

\begin{corollary}[Synchronous pathwise uniqueness]\label{cor:pathwise-uniqueness}
Suppose the coefficients $b,a,\sigma:[0,T]\times\R\to\R$ satisfy Assumption~\ref{ass:baseline} and also the spatial regularity conditions of Assumption~\ref{ass:perturbed}. Let $X^1$ and $X^2$ be two c\`adl\`ag adapted solutions of the same equation, driven by the same pair $(W,Z)$ and starting from the same initial value. If the stochastic integrals are well defined, then $X^1$ and $X^2$ are indistinguishable.
\end{corollary}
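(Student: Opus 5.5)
The plan is to apply Theorem~\ref{thm:critical}, or directly Proposition~\ref{prop:fundamental-delta}, with the baseline and perturbed coefficients taken equal: $(\wt b,\wt a,\wt\sigma):=(b,a,\sigma)$, $X:=X^1$, and $\wt X:=X^2$.

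First we check the standing assumptions. Assumption~\ref{ass:baseline} holds by hypothesis. Assumption~\ref{ass:perturbed} holds for the tilde coefficients because they equal $(b,a,\sigma)$, which are assumed to satisfy the spatial regularity conditions, with some moduli $\ell_b,\ell_a,\ell_\sigma$ and an exponent $\wt\eta\in[1/\alpha,1]$. For Assumption~\ref{ass:solutions}, the pair $(X^1,X^2)$ is a synchronous coupled pair of c\`adl\`ag adapted solutions driven by the same $(W,Z)$, with well-defined stochastic integrals. Since the coefficients coincide pointwise, the distances are $B=A=S=0$, which are finite and measured along $X^1$. The initial values also coincide, so $|x_0-\wt x_0|=0$.

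Next we apply \eqref{eq:fundamental-delta}. The constant there depends only on $T,\alpha,\wt\eta$ and the norms of the moduli, so it is independent of $\delta$ and $\eps$. The estimate reduces to
\[
\sup_{0\le t\le T}\E|X^1_t-X^2_t|^{\alpha-1}\le C\Big[\eps^{\alpha-1}+\frac{\delta^2}{\log\delta}\eps^{\alpha-1}+\frac{\eps^{\alpha\wt\eta-1}}{\log\delta}\Big].
\]
As in the zero-error part of the proof of Theorem~\ref{thm:critical}, we choose $\delta=\max\{2,\eps^{-r}\}$ with $r=(\alpha-1)/4$ and let $\eps\downarrow0$. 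Then $\eps^{\alpha\wt\eta-1}\le1$ and $1/\log\delta\to0$. This handles both the critical case $\wt\eta=1/\alpha$ and the non-critical case. The middle term behaves like $\eps^{(\alpha-1)/2}/\log(1/\eps)\to0$. Hence $\E|X^1_t-X^2_t|^{\alpha-1}=0$ for every $t\in[0,T]$, so $X^1_t=X^2_t$ almost surely for each fixed $t$.

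Finally, we upgrade this to indistinguishability. We take a countable dense set of times, for instance the rationals in $[0,T]$ together with $T$. On this set $X^1=X^2$ almost surely. Both processes are c\`adl\`ag, hence right-continuous, so the paths agree at every $t\in[0,T]$ outside a single null set.

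The only delicate point is confirming that Assumption~\ref{ass:solutions} does not secretly require existence or uniqueness theory. It does not: the assumption is purely a statement about the fixed pair, and finiteness of $B,A,S$ is trivial here.
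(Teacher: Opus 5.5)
Your proposal is correct and follows the paper's proof. With $X^1$ as baseline and $X^2$ as the perturbed solution, $|x_0-\wt x_0|=B=A=S=0$, so the zero-error limit of \eqref{eq:fundamental-delta} gives $\E|X^1_t-X^2_t|^{\alpha-1}=0$, and a countable dense set containing $T$ together with right-continuity yields indistinguishability. The only difference is cosmetic: the paper cites Theorems~\ref{thm:noncritical} and~\ref{thm:critical} separately, whereas you run the zero-error argument from the proof of Theorem~\ref{thm:critical}, with $\delta=\max\{2,\eps^{-(\alpha-1)/4}\}$, directly on the fundamental estimate, which handles both ranges of $\wt\eta$ at once.
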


\begin{proof}
Apply Theorem~\ref{thm:noncritical} when $\wt\eta>1/\alpha$ and Theorem~\ref{thm:critical} when $\wt\eta=1/\alpha$, taking $X^1$ as the baseline solution and $X^2$ as the perturbed solution. The initial error and all coefficient distances are zero. Hence
\[
  \sup_{0\le t\le T}\E|X_t^1-X_t^2|^{\alpha-1}=0.
\]
Thus $X_t^1=X_t^2$ almost surely for each fixed $t$. Equality on a countable dense subset of $[0,T]$, together with the c\`adl\`ag property, gives indistinguishability.
\end{proof}

\begin{remark}[Meaning of the $L^{\alpha-1}$ estimate]
The estimate has the following probabilistic consequence for every $\alpha\in(1,2)$. For any fixed threshold $\eta_0>0$, Markov's inequality gives
\[
  \sup_{0\le t\le T}\mathbb P\bigl(|X_t-\wt X_t|>\eta_0\bigr)
  \le \eta_0^{-(\alpha-1)}
  \sup_{0\le t\le T}\E|X_t-\wt X_t|^{\alpha-1}.
\]
Thus convergence in the $L^{\alpha-1}$ scale implies convergence in probability uniformly over deterministic times. The stronger estimate for the path supremum, namely a bound on
\[
  \mathbb P\left(\sup_{0\le t\le T}|X_t-\wt X_t|>\eta_0\right),
\]
is proved in Theorem~\ref{thm:probability} by a stopped quasi-martingale argument.
\end{remark}

\section{Constant-coefficient consistency checks}
\label{sec:consistency}

The exponents in Theorem~\ref{thm:noncritical} are consistent with elementary scaling in the Lipschitz case. In the Lipschitz case $\wt\eta=1$, the estimate becomes
\[
  |x_0-\wt x_0|^{\alpha-1},\qquad
  B^{\alpha-1},\qquad A^{\alpha-1},\qquad S^{\alpha-1}.
\]
The following constant-coefficient examples identify these powers with exact scalings. They also give lower bounds showing that, within these one-parameter subclasses, the Lipschitz-case exponent $\alpha-1$ cannot be improved separately in any of the four error variables.

\begin{proposition}[Consistency of the Lipschitz-case powers]
\label{prop:constant-consistency}
Let $p=\alpha-1\in(0,1)$.
\begin{enumerate}[label=\textup{(\roman*)}]
\item \textup{Initial value perturbation.} Let
\[
  X_t=x_0+W_t+Z_t,
  \qquad
  \wt X_t=\wt x_0+W_t+Z_t.
\]
Then
\[
  \sup_{0\le t\le T}\E|X_t-\wt X_t|^p=|x_0-\wt x_0|^p.
\]
Thus the initial error exponent $p=\alpha-1$ is exact.

\item \textup{Drift perturbation.} Let
\[
  X_t=bt,
  \qquad
  \wt X_t=\wt b t.
\]
Then
\[
  \sup_{0\le t\le T}\E|X_t-\wt X_t|^p=T^p|b-\wt b|^p.
\]
Since $B=T|b-\wt b|$, this is exactly $B^p$.

\item \textup{Brownian diffusion perturbation.} Let
\[
  X_t=aW_t,
  \qquad
  \wt X_t=\wt a W_t.
\]
Then
\[
  \sup_{0\le t\le T}\E|X_t-\wt X_t|^p
  =m_p T^{p/2}|a-\wt a|^p,
\]
where $m_p=\E|N(0,1)|^p$. Since $A=T^{1/2}|a-\wt a|$, this is $m_p A^p$.

\item \textup{Stable jump coefficient perturbation.} Let
\[
  X_t=\sigma Z_t,
  \qquad
  \wt X_t=\wt\sigma Z_t.
\]
Then
\[
  \sup_{0\le t\le T}\E|X_t-\wt X_t|^p
  =\E|Z_1|^p T^{p/\alpha}|\sigma-\wt\sigma|^p.
\]
Since $S=T^{1/\alpha}|\sigma-\wt\sigma|$, this is $\E|Z_1|^pS^p$.
\end{enumerate}
Consequently, when $\wt\eta=1$, the powers $|x_0-\wt x_0|^{\alpha-1}$, $B^{\alpha-1}$, $A^{\alpha-1}$ and $S^{\alpha-1}$ agree with the exact scaling of the four elementary perturbation mechanisms.
\end{proposition}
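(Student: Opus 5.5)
In each of the four cases the difference $X_t-\wt X_t$ is explicit and deterministic up to a single scaled noise variable, so the plan is a direct computation, item by item. The only probabilistic inputs are the scaling properties of Brownian motion and of the symmetric $\alpha$-stable process, and the finiteness of the moment $\E|Z_1|^p$ for $p=\alpha-1<\alpha$.

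For (i), the common noise cancels, $X_t-\wt X_t=x_0-\wt x_0$ for all $t$, and the identity is immediate. For (ii), $X_t-\wt X_t=(b-\wt b)t$ is deterministic. Hence $\E|X_t-\wt X_t|^p=t^p|b-\wt b|^p$, which is increasing in $t$, so the supremum is attained at $t=T$. Since $B=\int_0^T|b-\wt b|\dd s=T|b-\wt b|$, the value equals $B^p$.

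For (iii), $X_t-\wt X_t=(a-\wt a)W_t$, and $W_t$ has the law of $t^{1/2}N(0,1)$. This gives $\E|X_t-\wt X_t|^p=m_p t^{p/2}|a-\wt a|^p$, again maximized at $t=T$. Here $A^2=T|a-\wt a|^2$, which identifies the result as $m_pA^p$.

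For (iv), $X_t-\wt X_t=(\sigma-\wt\sigma)Z_t$. Self-similarity of the symmetric $\alpha$-stable process gives $Z_t\overset{d}{=}t^{1/\alpha}Z_1$. The moment $\E|Z_1|^p$ is finite because $p=\alpha-1<\alpha$, the standard moment criterion for stable laws already invoked in the remark on finite-distance criteria. Thus $\E|X_t-\wt X_t|^p=\E|Z_1|^p\,t^{p/\alpha}|\sigma-\wt\sigma|^p$, maximized at $T$, and $S^\alpha=T|\sigma-\wt\sigma|^\alpha$ yields $\E|Z_1|^pS^p$.

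Each example has constant, hence Lipschitz, coefficients, so it falls within the class covered by \Cref{thm:noncritical} with $\wt\eta=1$. The final "consequently" sentence then follows by comparing these exact values with the exponents in \eqref{eq:noncritical-rate}: at $\wt\eta=1$ those exponents all equal $\alpha-1$. No step is a genuine obstacle. The only points needing care are checking finiteness of $\E|Z_1|^{\alpha-1}$ and noting that the supremum over $t$ is attained at $t=T$ by monotonicity.
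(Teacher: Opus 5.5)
Your proposal is correct and takes the same route as the paper. The common noise cancels in (i)--(ii), and (iii)--(iv) follow from the scalings $W_t\stackrel{d}{=}t^{1/2}N(0,1)$ and $Z_t\stackrel{d}{=}t^{1/\alpha}Z_1$ together with $\E|Z_1|^{\alpha-1}<\infty$. Your explicit observation that the supremum over $t$ is attained at $t=T$ by monotonicity is a step the paper leaves implicit.
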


\begin{proof}
The initial-value and drift cases are deterministic after the common noise cancels. For the Brownian case, use $W_t\stackrel{d}{=}t^{1/2}N(0,1)$. For the stable case, use the stable scaling $Z_t\stackrel{d}{=}t^{1/\alpha}Z_1$ and the fact that $p=\alpha-1<\alpha$, so $\E|Z_1|^p<\infty$.
\end{proof}

The preceding proposition gives exact scalings for four one-parameter subclasses. The next proposition reformulates those scalings as a lower-bound statement: in each subclass, no uniform estimate can hold with a strictly larger power of the single nonzero error variable.

\begin{proposition}[Lower bound in the Lipschitz constant-coefficient subclass]
\label{prop:constant-sharpness}
Let $p=\alpha-1$ and fix $T>0$. In each of the four constant-coefficient families in Proposition~\ref{prop:constant-consistency}, there do not exist constants $C>0$, $q>p$ and $E_0>0$ such that
\[
  \sup_{0\le t\le T}\E|X_t-\wt X_t|^p\le C E^q
\]
for all $0<E<E_0$, where $E$ denotes the corresponding single nonzero error variable among $|x_0-\wt x_0|$, $B$, $A$, and $S$. In particular, the exponent $\alpha-1$ in the Lipschitz case $\wt\eta=1$ is optimal separately for the initial value, drift, Brownian diffusion, and stable jump coefficient errors within this constant-coefficient subclass.
\end{proposition}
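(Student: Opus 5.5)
The plan is to argue by contradiction, using the exact identities of Proposition~\ref{prop:constant-consistency}. In each of the four families, that proposition gives
\[
  \sup_{0\le t\le T}\E|X_t-\wt X_t|^p=c\,E^p,
\]
where $E$ is the single nonzero error variable. The constant $c$ is strictly positive and independent of $E$:
\[
  c=1,\qquad c=1,\qquad c=m_p,\qquad c=\E|Z_1|^p,
\]
for the initial-value, drift, Brownian and stable families respectively. Here $m_p=\E|N(0,1)|^p\in(0,\infty)$. For the stable family, $\E|Z_1|^p$ is finite because $p=\alpha-1<\alpha$, and it is positive because $Z_1$ is not almost surely zero.

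Next I would check that every value $E\in(0,E_0)$ is actually attained within the family.
\begin{itemize}
\item Initial values: take $\wt x_0=x_0+E$.
\item Drift: take $\wt b=b+E/T$, so that $B=T|b-\wt b|=E$.
\item Brownian diffusion: take $\wt a=a+ET^{-1/2}$, so that $A=E$.
\item Stable coefficient: take $\wt\sigma=\sigma+ET^{-1/\alpha}$, so that $S=E$.
\end{itemize}

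Now suppose constants $C>0$, $q>p$ and $E_0>0$ existed with $cE^p\le CE^q$ for all $0<E<E_0$. Dividing by $E^p$ gives $c\le CE^{q-p}$. Since $q-p>0$, the right-hand side tends to $0$ as $E\downarrow0$, which contradicts $c>0$. Hence no such constants exist in any of the four families. The final assertion, that the exponent $\alpha-1$ is optimal in the Lipschitz case $\wt\eta=1$, follows by comparing with Theorem~\ref{thm:noncritical}. At $\wt\eta=1$ the rate exponents there reduce to
\[
  \frac{\alpha-1}{1}=\alpha-1,\qquad \frac{2(\alpha-1)}{2}=\alpha-1,\qquad \alpha-1,
\]
so the theorem's rate cannot be improved within these subclasses.

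The only step needing care is the strict positivity and finiteness of $c$, in particular $0<\E|Z_1|^p<\infty$. This is where $p<\alpha$ is used, through the stable tail bound. The remainder of the argument is elementary.
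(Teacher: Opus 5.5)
Your proposal is correct and follows the paper's proof. Both arguments take the exact identity $\sup_{0\le t\le T}\E|X_t-\wt X_t|^p=c\,E^p$ with $c>0$ from Proposition~\ref{prop:constant-consistency}, divide by $E^p$, and let $E\downarrow0$ to contradict $c>0$; your checks that every $E\in(0,E_0)$ is attained in each family and that $0<\E|Z_1|^p<\infty$ make explicit what the paper covers by saying the other cases ``follow identically.''
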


\begin{proof}
It suffices to prove the Brownian case; the other three cases follow identically from Proposition~\ref{prop:constant-consistency}. In the Brownian family,
\[
  A=T^{1/2}|a-\wt a|
  \qquad\text{and}\qquad
  \sup_{0\le t\le T}\E|X_t-\wt X_t|^p=m_pA^p.
\]
If a uniform estimate with exponent $q>p$ held for every $A\in(0,E_0)$, then
\[
  m_pA^p\le CA^q.
\]
Dividing by $A^p$ and letting $A\downarrow0$ would give $m_p\le0$, contradicting $m_p>0$. The initial-value, drift, and stable cases follow by replacing $A$ with $|x_0-\wt x_0|$, $B$, and $S$, respectively.
\end{proof}

\begin{remark}[Scope of the lower bound]
Proposition~\ref{prop:constant-sharpness} establishes sharpness in the Lipschitz constant-coefficient subclass. For H\"older stable coefficients with $\wt\eta<1$, the exponents
\[
\frac{\alpha\wt\eta-1}{\alpha\wt\eta-\alpha+1},
\qquad
\frac{2(\alpha\wt\eta-1)}{\alpha\wt\eta-\alpha+2},
\qquad
\alpha-\frac1{\wt\eta}
\]
come from balancing the direct coefficient errors against the H\"older path-difference term $\eps^{\alpha\wt\eta-1}$ in the Komatsu estimate. Lower-bound estimates in that regime require coefficient-specific local constructions and are distinct from the constant-coefficient checks above.
\end{remark}

\section{Second-order Komatsu estimates}
\label{sec:auxiliary}

This section proves the analytic estimates for the mollified Riesz kernel used in Proposition~\ref{prop:fundamental-delta}. The mollifier is even, has total mass one, and is supported on the logarithmic annulus $\eps/\delta<|x|<\eps$.

\begin{lemma}[Explicit log-annular mollifier]
\label{lem:explicit-mollifier}
Let $\varphi\in C_c^\infty((0,1))$ satisfy
\[
  \varphi\ge0,\qquad \int_0^1\varphi(r)\dd r=1.
\]
For $\delta\ge2$, $\eps\in(0,1)$, put $L_\delta:=\log\delta$. For $y\ne0$, define
\[
  r_{\delta,\eps}(y):=\frac{\log(\eps/|y|)}{L_\delta}
\]
and
\begin{equation}\label{eq:explicit-psi}
  \psi_{\delta,\eps}(y)
  :=\frac{1}{2L_\delta |y|}\,\varphi\!\left(r_{\delta,\eps}(y)\right),
  \qquad y\ne0,
\end{equation}
while setting $\psi_{\delta,\eps}(0)=0$. Then
\[
  \psi_{\delta,\eps}\in C_c^\infty(\R),\qquad
  \psi_{\delta,\eps}\ge0,
  \qquad
  \int_\R\psi_{\delta,\eps}(y)\dd y=1,
\]
and
\[
  \supp\psi_{\delta,\eps}\subset\{y:\eps/\delta<|y|<\eps\}.
\]
Moreover, there exists a constant $C_\varphi>0$, independent of $\delta$ and $\eps$, such that
\begin{align}
  \|\psi_{\delta,\eps}\|_\infty
  &\le C_\varphi\frac{\delta}{\eps\log\delta}, \label{eq:psi0-bound}\\
  \|\psi_{\delta,\eps}'\|_\infty
  &\le C_\varphi\frac{\delta^2}{\eps^2\log\delta}, \label{eq:psi1-bound}\\
  \|\psi_{\delta,\eps}''\|_\infty
  &\le C_\varphi\frac{\delta^3}{\eps^3\log\delta}. \label{eq:psi2-bound}
\end{align}
Finally,
\begin{equation}\label{eq:psi-log-bound}
  0\le \psi_{\delta,\eps}(y)
  \le \frac{C_\varphi}{|y|\log\delta},
  \qquad \eps/\delta<|y|<\eps.
\end{equation}
\end{lemma}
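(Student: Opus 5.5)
The proof is a direct computation based on the substitution $r=r_{\delta,\eps}(y)$. We handle support, smoothness and positivity first, then mass, then the derivative bounds.

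\textbf{Support and smoothness.} Since $\supp\varphi\subset[r_0,r_1]$ for some $0<r_0<r_1<1$, the function $\varphi(r_{\delta,\eps}(y))$ is nonzero only when $0<\log(\eps/|y|)<L_\delta$. This is equivalent to $\eps/\delta<|y|<\eps$, and in fact $\psi_{\delta,\eps}$ vanishes near both $|y|=\eps/\delta$ and $|y|=\eps$. On $\{y\ne0\}$ the map $y\mapsto r_{\delta,\eps}(y)$ is smooth and $1/|y|$ is smooth, so $\psi_{\delta,\eps}$ is smooth there. It also vanishes identically on $(-\eps/\delta,\eps/\delta)$, so setting $\psi_{\delta,\eps}(0)=0$ gives a $C_c^\infty$ function. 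Nonnegativity and evenness are immediate.

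\textbf{Mass.} By evenness,
\[
\int_\R\psi_{\delta,\eps}\dd y=2\int_0^\infty\frac{\varphi(r_{\delta,\eps}(y))}{2L_\delta y}\dd y.
\]
We substitute $r=\log(\eps/y)/L_\delta$, which gives $\dd r=-\dd y/(L_\delta y)$, and the integral becomes $\int_0^1\varphi(r)\dd r=1$.

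\textbf{Bounds.} The bound \eqref{eq:psi-log-bound} follows with $C_\varphi=\|\varphi\|_\infty/2$. Since $|y|>\eps/\delta$ on the support, this already gives \eqref{eq:psi0-bound}.

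For the derivatives, work on $y>0$ (the case $y<0$ follows by evenness). We have $r'(y)=-1/(L_\delta y)$, so $\psi=\frac{1}{2L_\delta}\,y^{-1}\varphi(r(y))$ and
\[
\psi'(y)=\frac{1}{2L_\delta}\Bigl[-y^{-2}\varphi(r)-\frac{y^{-2}}{L_\delta}\varphi'(r)\Bigr].
\]
Differentiating once more gives $\psi''$ as $\frac{1}{2L_\delta}y^{-3}$ times a combination of $\varphi(r)$, $\varphi'(r)/L_\delta$ and $\varphi''(r)/L_\delta^2$ with absolute numerical coefficients. Because $L_\delta\ge\log2$, the factors $1/L_\delta$ and $1/L_\delta^2$ are bounded by constants. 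Hence
\[
|\psi^{(k)}(y)|\le C_\varphi\,\frac{|y|^{-1-k}}{\log\delta}
\qquad\text{on the support, for }k=0,1,2,
\]
with $C_\varphi$ depending on $\|\varphi\|_{C^2}$. Inserting $|y|^{-1}<\delta/\eps$ yields \eqref{eq:psi1-bound} and \eqref{eq:psi2-bound}.

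There is no real obstacle here. The only point needing care is to keep track of the factors $1/L_\delta$ that arise from the chain rule. These are harmless, since they are uniformly bounded because $\delta\ge2$, so the constant $C_\varphi$ remains independent of $\delta$ and $\eps$.
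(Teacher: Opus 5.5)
Your proposal is correct and follows the same route as the paper's proof. Support and smoothness come from the compact support of $\varphi$ inside $(0,1)$, the mass identity from the substitution $r=\log(\eps/y)/L_\delta$, the pointwise bound $\psi_{\delta,\eps}(y)\le C_\varphi/(|y|\log\delta)$ directly from the definition, and the derivative bounds from the chain rule, with the extra factors $L_\delta^{-m}$ absorbed using $L_\delta\ge\log 2$ and then $|y|>\eps/\delta$ on the support.
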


\begin{proof}
The support assertion follows from $\supp\varphi\subset(0,1)$, since
\[
  0<r_{\delta,\eps}(y)<1
  \quad\Longleftrightarrow\quad
  \eps/\delta<|y|<\eps.
\]
The function is smooth because $\varphi$ vanishes in a neighborhood of $0$ and $1$; hence the apparent singularity at $y=0$ and the two annular endpoints do not create boundary singularities.

The normalization is as follows. By evenness and the change of variables
\[
  r=\frac{\log(\eps/y)}{L_\delta},
  \qquad y=\eps e^{-L_\delta r},
  \qquad \frac{\dd y}{y}=-L_\delta\dd r,
\]
one obtains
\[
\begin{aligned}
\int_\R\psi_{\delta,\eps}(y)\dd y
&=2\int_{\eps/\delta}^{\eps}
\frac{1}{2L_\delta y}\varphi\!\left(\frac{\log(\eps/y)}{L_\delta}\right)\dd y \\
&=\int_0^1\varphi(r)\dd r=1.
\end{aligned}
\]
The pointwise estimate \eqref{eq:psi-log-bound} follows directly from \eqref{eq:explicit-psi}. Since $|y|\ge\eps/\delta$ on the support, it implies \eqref{eq:psi0-bound}.

For $y>0$, set $r=r_{\delta,\eps}(y)$. Differentiating gives
\[
  \psi_{\delta,\eps}'(y)
  =-\frac{1}{2L_\delta y^2}\varphi(r)
   -\frac{1}{2L_\delta^2y^2}\varphi'(r).
\]
Since $\delta\ge2$, one has $L_\delta\ge\log2$; hence powers $L_\delta^{-m}$ with $m\ge1$ are bounded by a constant multiple of $L_\delta^{-1}$. The same bound holds on $y<0$ by evenness. Hence
\[
  |\psi_{\delta,\eps}'(y)|
  \le \frac{C}{L_\delta |y|^2}
  \le C\frac{\delta^2}{\eps^2\log\delta},
\]
which proves \eqref{eq:psi1-bound}. A second differentiation yields a finite linear combination of terms of the form
\[
  \frac{1}{L_\delta^m |y|^3}\varphi^{(j)}(r),
  \qquad 1\le m\le3,\quad 0\le j\le2.
\]
Therefore
\[
  |\psi_{\delta,\eps}''(y)|
  \le \frac{C}{L_\delta |y|^3}
  \le C\frac{\delta^3}{\eps^3\log\delta},
\]
which proves \eqref{eq:psi2-bound}.
\end{proof}

\begin{definition}[Mollified Riesz kernel]\label{def:mollifier-delta}
Let $\psi_{\delta,\eps}$ be the log-annular mollifier from Lemma~\ref{lem:explicit-mollifier}. Define
\[
  u_{\delta,\eps}(x):=(|\cdot|^{\alpha-1}*\psi_{\delta,\eps})(x),
  \qquad x\in\R.
\]
\end{definition}

\begin{lemma}[Approximation]\label{lem:approx}
For all $x\in\R$, $\delta\ge2$, and $\eps\in(0,1)$,
\begin{align}
  |x|^{\alpha-1}&\le u_{\delta,\eps}(x)+\eps^{\alpha-1}, \label{eq:approx-lower}\\
  u_{\delta,\eps}(x)&\le |x|^{\alpha-1}+\eps^{\alpha-1}. \label{eq:approx-upper}
\end{align}
\end{lemma}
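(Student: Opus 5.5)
The plan is to exploit two facts from Lemma~\ref{lem:explicit-mollifier}: $\psi_{\delta,\eps}$ is a probability density, and it is supported in $\{|y|<\eps\}$. Together with the elementary subadditivity of the power $t\mapsto t^{\alpha-1}$ for $\alpha-1\in(0,1)$, these give both inequalities.

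First I record the inequality
\[
  \bigl||x|^{\alpha-1}-|x-y|^{\alpha-1}\bigr|\le |y|^{\alpha-1},
  \qquad x,y\in\R.
\]
This follows from $(s+t)^{p}\le s^p+t^p$ for $s,t\ge0$ and $p=\alpha-1\in(0,1)$. That inequality is in turn a consequence of the concavity of $t\mapsto t^p$ with value $0$ at $0$, applied with $|x|\le|x-y|+|y|$ and $|x-y|\le|x|+|y|$.

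Next, since $\int_\R\psi_{\delta,\eps}=1$ and $\psi_{\delta,\eps}\ge0$, I write
\[
  u_{\delta,\eps}(x)-|x|^{\alpha-1}
  =\int_\R\bigl(|x-y|^{\alpha-1}-|x|^{\alpha-1}\bigr)\psi_{\delta,\eps}(y)\dd y .
\]
The convolution is finite for every $x$ because $\psi_{\delta,\eps}$ is bounded with compact support and $|\cdot|^{\alpha-1}$ is locally bounded. On the support of $\psi_{\delta,\eps}$ we have $|y|<\eps$, so the integrand is bounded in absolute value by $|y|^{\alpha-1}\psi_{\delta,\eps}(y)\le\eps^{\alpha-1}\psi_{\delta,\eps}(y)$. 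Integrating against the unit mass gives
\[
  \bigl|u_{\delta,\eps}(x)-|x|^{\alpha-1}\bigr|\le\eps^{\alpha-1},
\]
and this single bound yields both \eqref{eq:approx-lower} and \eqref{eq:approx-upper}.

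There is no real obstacle here. The only point requiring any care is the subadditivity of $t^{\alpha-1}$, and that is standard. Note also that the constants are independent of $\delta$, because only the mass and the outer support radius $\eps$ of the mollifier enter the argument.
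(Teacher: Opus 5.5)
Your proof is correct and follows essentially the same route as the paper: it uses subadditivity of $r\mapsto r^{\alpha-1}$, the support condition $|y|<\eps$, and the unit mass of $\psi_{\delta,\eps}$. The only cosmetic difference is that you package both inequalities into the single bound $\bigl|u_{\delta,\eps}(x)-|x|^{\alpha-1}\bigr|\le\eps^{\alpha-1}$, whereas the paper proves the lower bound and notes that the upper bound is identical.
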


\begin{proof}
Since $0<\alpha-1<1$, the map $r\mapsto r^{\alpha-1}$ is subadditive on $[0,\infty)$. On $\supp\psi_{\delta,\eps}$ one has $|y|\le\eps$. Hence
\[
  |x|^{\alpha-1}\le |x-y|^{\alpha-1}+|y|^{\alpha-1}\le |x-y|^{\alpha-1}+\eps^{\alpha-1},
\]
and integration gives \eqref{eq:approx-lower}. The proof of \eqref{eq:approx-upper} is identical.
\end{proof}

\begin{proposition}[First- and second-order Komatsu estimates]\label{prop:komatsu}
There exists $C=C(\alpha,\varphi)>0$ such that, for all $x\in\R$, $\delta\ge2$, and $\eps\in(0,1)$,
\begin{align}
|u_{\delta,\eps}'(x)|
&\le C\left(|x|^{\alpha-2}\1_{\{|x|>2\eps\}}
+\frac{\delta}{\eps^{2-\alpha}\log\delta}\1_{\{|x|\le2\eps\}}\right), \label{eq:delta-first}\\
|u_{\delta,\eps}'(x)|\,|x|
&\le C\left(|x|^{\alpha-1}+\frac{\delta}{\log\delta}\eps^{\alpha-1}\right), \label{eq:delta-first-product}\\
|u_{\delta,\eps}''(x)|
&\le C\left(|x|^{\alpha-3}\1_{\{|x|>2\eps\}}
+\frac{\delta^2}{\eps^{3-\alpha}\log\delta}\1_{\{|x|\le2\eps\}}\right), \label{eq:delta-second}\\
|u_{\delta,\eps}''(x)|x^2
&\le C\left(|x|^{\alpha-1}+\frac{\delta^2}{\log\delta}\eps^{\alpha-1}\right). \label{eq:delta-second-product}
\end{align}
In particular,
\begin{equation}\label{eq:global-derivative-bounds}
  \|u_{\delta,\eps}'\|_\infty\le C\frac{\delta}{\eps^{2-\alpha}\log\delta},
  \qquad
  \|u_{\delta,\eps}''\|_\infty\le C\frac{\delta^2}{\eps^{3-\alpha}\log\delta}.
\end{equation}
\end{proposition}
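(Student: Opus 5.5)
Since $u_{\delta,\eps}=\RieszK*\psi_{\delta,\eps}$ with $\psi_{\delta,\eps}$ smooth, compactly supported in $\{\eps/\delta<|y|<\eps\}$, I will split the analysis into the far region $|x|>2\eps$ and the near region $|x|\le2\eps$. In the far region I put the derivatives on the kernel. In the near region I put them on the mollifier, but only partially, so that the singularity of the kernel stays integrable.

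\emph{Far region.} For $|x|>2\eps$ and $y\in\supp\psi_{\delta,\eps}$ one has $|x-y|\ge|x|-\eps\ge|x|/2$. Hence $\RieszK$ is smooth along the segment, and I can differentiate under the integral:
\[
u_{\delta,\eps}^{(k)}(x)=\int_\R \RieszK^{(k)}(x-y)\psi_{\delta,\eps}(y)\dd y,\qquad k=1,2.
\]
Using $|\RieszK'(z)|\le C|z|^{\alpha-2}$, $|\RieszK''(z)|\le C|z|^{\alpha-3}$ and $\int\psi_{\delta,\eps}=1$, this gives $|u'|\le C|x|^{\alpha-2}$ and $|u''|\le C|x|^{\alpha-3}$, with constants depending only on $\alpha$.

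\emph{Near region.} For $|x|\le 2\eps$ I write $u_{\delta,\eps}(x)=\int\RieszK(z)\psi_{\delta,\eps}(x-z)\dd z$. Then
\[
u'=\int \RieszK(z)\psi'_{\delta,\eps}(x-z)\dd z=\int\RieszK'(z)\psi_{\delta,\eps}(x-z)\dd z .
\]
The second identity is integration by parts, valid since $\RieszK'$ is locally integrable ($\alpha-2>-1$) and $\RieszK$ is continuous. The integrand is supported in $|z|\le 3\eps$. Using \eqref{eq:psi0-bound},
\[
|u'(x)|\le \|\psi_{\delta,\eps}\|_\infty\int_{|z|\le3\eps}(\alpha-1)|z|^{\alpha-2}\dd z\le C\frac{\delta}{\eps\log\delta}\eps^{\alpha-1},
\]
which is \eqref{eq:delta-first}. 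For the second derivative, $\RieszK''$ is not locally integrable, so I instead write $u''=\int\RieszK'(z)\psi'_{\delta,\eps}(x-z)\dd z$, i.e.\ one derivative on each factor. With \eqref{eq:psi1-bound},
\[
|u''(x)|\le C\frac{\delta^2}{\eps^2\log\delta}\,\eps^{\alpha-1}=C\frac{\delta^2}{\eps^{3-\alpha}\log\delta}.
\]
Justifying this splitting is the one delicate point: $\RieszK'$ is only $L^1_{\rm loc}$ with a jump at $0$. Since $\RieszK$ is absolutely continuous with derivative $\RieszK'$ in the distributional sense, the identity $(\RieszK*\psi)''=\RieszK'*\psi'$ holds classically, because $\psi\in C_c^\infty$.

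\emph{Products and global bounds.} The product bounds \eqref{eq:delta-first-product} and \eqref{eq:delta-second-product} then follow by multiplying through. In the far region, $|x|\cdot|x|^{\alpha-2}=|x|^{\alpha-1}$ and $x^2|x|^{\alpha-3}=|x|^{\alpha-1}$. In the near region, $|x|\le2\eps$ turns $\eps^{\alpha-2}$ into $2\eps^{\alpha-1}$ and $\eps^{\alpha-3}x^2$ into $4\eps^{\alpha-1}$.

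For \eqref{eq:global-derivative-bounds}, note that in the far region $|x|^{\alpha-2}\le(2\eps)^{\alpha-2}$ and $|x|^{\alpha-3}\le(2\eps)^{\alpha-3}$. These are dominated by the near-region bounds because $\delta/\log\delta\ge 2/\log 2>1$ for $\delta\ge2$, and likewise for $\delta^2/\log\delta$.

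All constants depend only on $\alpha$ and on $C_\varphi$ from Lemma~\ref{lem:explicit-mollifier}. The main obstacle I anticipate is only the bookkeeping in the near-region second derivative, namely choosing the split of derivatives so that the $\log\delta$ gain from \eqref{eq:psi1-bound} is preserved. The split above does this.
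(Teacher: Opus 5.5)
Your proof is correct and follows essentially the same route as the paper: differentiation under the integral in the far region $|x|>2\eps$, and near the origin the splittings $u_{\delta,\eps}'=\RieszK'*\psi_{\delta,\eps}$ and $u_{\delta,\eps}''=\RieszK'*\psi_{\delta,\eps}'$, combined with the sup bounds \eqref{eq:psi0-bound}--\eqref{eq:psi1-bound} and $\int_{|z|\le3\eps}|z|^{\alpha-2}\dd z\le C\eps^{\alpha-1}$. One minor slip: $\delta/\log\delta$ is not bounded below by $2/\log2$ on $[2,\infty)$, since its minimum there is $e$, attained at $\delta=e$; but $\delta/\log\delta>1$ still holds because $\log\delta<\delta$, so your derivation of \eqref{eq:global-derivative-bounds} (which the paper leaves implicit) is unaffected.
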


\begin{proof}
Let $\RieszK(x)=|x|^{\alpha-1}$. If $|x|>2\eps$ and $y\in\supp\psi_{\delta,\eps}$, then
\[
  \frac{|x|}{2}\le |x-y|\le \frac{3|x|}{2}.
\]
Differentiating under the integral gives
\[
  u_{\delta,\eps}'(x)=(\alpha-1)\int_\R \sgn(x-y)|x-y|^{\alpha-2}\psi_{\delta,\eps}(y)\dd y,
\]
so $|u_{\delta,\eps}'(x)|\le C|x|^{\alpha-2}$. Similarly,
\[
  u_{\delta,\eps}''(x)=(\alpha-1)(\alpha-2)\int_\R |x-y|^{\alpha-3}\psi_{\delta,\eps}(y)\dd y,
\]
so $|u_{\delta,\eps}''(x)|\le C|x|^{\alpha-3}$.

Next suppose $|x|\le2\eps$. For the first derivative, using the integral formula above and the pointwise bound \eqref{eq:psi-log-bound},
\begin{align*}
|u_{\delta,\eps}'(x)|
&\le C\int_{\eps/\delta<|y|<\eps}|x-y|^{\alpha-2}\psi_{\delta,\eps}(y)\dd y\\
&\le C\frac{\delta}{\eps\log\delta}
\int_{\eps/\delta<|y|<\eps}|x-y|^{\alpha-2}\dd y.
\end{align*}
The last integral is bounded by $C\eps^{\alpha-1}$ because $\alpha-2>-1$ and, when $|x|\le2\eps$ and $|y|<\eps$, the variable $x-y$ ranges inside $[-3\eps,3\eps]$. Hence
\[
|u_{\delta,\eps}'(x)|\le C\frac{\delta}{\eps^{2-\alpha}\log\delta}.
\]
For the second derivative near the origin, use the convolution identity
\[
  u_{\delta,\eps}''=\RieszK' * \psi_{\delta,\eps}',
  \qquad \RieszK'(x)=(\alpha-1)\sgn(x)|x|^{\alpha-2}.
\]
Then \eqref{eq:psi1-bound} gives
\begin{align*}
|u_{\delta,\eps}''(x)|
&\le C\frac{\delta^2}{\eps^2\log\delta}
\int_{\eps/\delta<|y|<\eps}|x-y|^{\alpha-2}\dd y\\
&\le C\frac{\delta^2}{\eps^2\log\delta}\eps^{\alpha-1}
= C\frac{\delta^2}{\eps^{3-\alpha}\log\delta}.
\end{align*}
The product estimates follow by multiplying the preceding bounds by $|x|$ or $x^2$ and separating the two regions $|x|>2\eps$ and $|x|\le2\eps$. For instance, if $|x|\le2\eps$, then
\[
  |u_{\delta,\eps}''(x)|x^2
  \le C\frac{\delta^2}{\eps^{3-\alpha}\log\delta}\eps^2
  =C\frac{\delta^2}{\log\delta}\eps^{\alpha-1},
\]
whereas the far-field estimate gives $|u_{\delta,\eps}''(x)|x^2\le C|x|^{\alpha-1}$ when $|x|>2\eps$.
\end{proof}

\begin{lemma}[Stable generator identity]\label{lem:stable-generator}
Let $\mathcal S(\R)$ denote the Schwartz space and let $\mathcal S'(\R)$ denote the space of tempered distributions. Let $L^\alpha$ be the generator of the one-dimensional symmetric $\alpha$-stable process, with sign convention
\[
L^\alpha f(x)=\int_{\R\setminus\{0\}}\left(f(x+z)-f(x)-\1_{\{|z|\le1\}}zf'(x)\right)c_\alpha |z|^{-1-\alpha}\dd z.
\]
Then
\[
  L^\alpha u_{\delta,\eps}(x)=C_\alpha\psi_{\delta,\eps}(x)
\]
in the sense of distributions and pointwise for the mollified function, for some constant $C_\alpha>0$. Consequently, for a scale parameter $\gamma\in\R$,
\[
  L^\alpha_\gamma u_{\delta,\eps}(x)=C_\alpha |\gamma|^\alpha\psi_{\delta,\eps}(x),
\]
where
\[
L^\alpha_\gamma f(x):=\int_{\R\setminus\{0\}}\left(f(x+\gamma z)-f(x)-\1_{\{|z|\le1\}}\gamma z f'(x)\right)\nu(\md z).
\]
\end{lemma}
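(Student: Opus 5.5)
The plan is to first establish the distributional identity $L^\alpha\RieszK=C_\alpha\delta_0$ in $\mathcal S'(\R)$ via the Fourier transform. Then I would transfer it to $u_{\delta,\eps}=\RieszK*\psi_{\delta,\eps}$ by convolution, upgrade the result to a pointwise identity using smoothness, and finally obtain the scaled version by a change of variables in the L\'evy measure.

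\emph{Step 1 (Fourier side).} The symbol of $L^\alpha$ is $-c|\xi|^\alpha$ with $c>0$: for $f\in\mathcal S(\R)$ one has $\widehat{L^\alpha f}(\xi)=-c|\xi|^\alpha\hat f(\xi)$. The cutoff term $\1_{\{|z|\le1\}}z$ contributes nothing by symmetry, and this also shows the value is unchanged under other symmetric cutoffs. The kernel $\RieszK$ is a homogeneous tempered distribution of degree $\alpha-1\in(0,1)$, not an integer. Its Fourier transform is therefore a homogeneous distribution of degree $-\alpha$, which is even and, since $-\alpha\in(-2,-1)$, is given by the finite-part (Hadamard-regularized) distribution $c'\,\mathrm{f.p.}|\xi|^{-\alpha}$. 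The classical Gelfand--Shilov formula gives $c'=2\Gamma(\alpha)\cos(\pi\alpha/2)$ up to normalization, and this is negative for $\alpha\in(1,2)$. Multiplying by $-c|\xi|^\alpha$ gives the constant $-cc'>0$, so $L^\alpha\RieszK=C_\alpha\delta_0$ with $C_\alpha>0$.

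\emph{Main obstacle.} The delicate point is justifying the product $|\xi|^\alpha\cdot\mathrm{f.p.}|\xi|^{-\alpha}=1$. I would avoid it by working directly in physical space. For $\phi\in\mathcal S$, the adjoint of $L^\alpha$ is $L^\alpha$ itself by symmetry, so one needs $\int\RieszK\,L^\alpha\phi\,dx=C_\alpha\phi(0)$. The integral converges because $L^\alpha\phi(x)=O(|x|^{-1-\alpha})$ at infinity and $\RieszK$ grows like $|x|^{\alpha-1}$. To evaluate it, I would approximate $\RieszK$ by $e^{-t|x|}|x|^{\alpha-1}$, or equivalently use the subordination formula $|x|^{\alpha-1}=c\int_0^\infty (1-e^{-s x^2/2})s^{-(\alpha+1)/2}\,ds$. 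Each term is then computed explicitly in Fourier, and dominated convergence passes to the limit. Positivity of $C_\alpha$ can also be seen directly: $L^\alpha\RieszK(x)$ for $x\neq0$ vanishes by homogeneity and the known harmonicity of $|x|^{\alpha-1}$ away from the origin. Alternatively, $\int\RieszK L^\alpha\phi$ can be computed for a nonnegative bump $\phi$ concentrated near $0$, using that $\RieszK$ has a strict local minimum at $0$.

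\emph{Step 2 (convolution and pointwise identity).} Since $\psi_{\delta,\eps}\in C_c^\infty$, convolution commutes with $L^\alpha$ on $\mathcal S'$. This gives $L^\alpha u_{\delta,\eps}=C_\alpha\,\delta_0*\psi_{\delta,\eps}=C_\alpha\psi_{\delta,\eps}$ in $\mathcal S'$. For the pointwise statement, note that $u_{\delta,\eps}$ is $C^\infty$, has bounded second derivative by \eqref{eq:global-derivative-bounds}, and grows like $|x|^{\alpha-1}$ (Lemma~\ref{lem:approx}). The integral defining $L^\alpha u_{\delta,\eps}(x)$ therefore converges absolutely: the small jumps are controlled by $\|u''\|_\infty z^2$, and the large jumps by $|z|^{\alpha-1-1-\alpha}$. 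By dominated convergence it defines a continuous function of $x$. This continuous function coincides with the distribution $L^\alpha u_{\delta,\eps}$, as one checks by Fubini when pairing with test functions. Two continuous functions that agree as distributions agree everywhere, which gives the pointwise identity.

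\emph{Step 3 (scaling).} For $\gamma\ne0$, substitute $w=\gamma z$ and use $\nu(\md z)=c_\alpha|z|^{-1-\alpha}\md z$. This gives $L^\alpha_\gamma f=|\gamma|^\alpha L^\alpha f$; the change of cutoff from $|w|\le|\gamma|$ to $|w|\le1$ is harmless, since it integrates the odd function $wf'(x)$ over a symmetric annulus with finite $\nu$-mass. For $\gamma=0$ both sides vanish.
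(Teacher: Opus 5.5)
Your proposal is correct and follows essentially the same route as the paper: a Fourier/homogeneous-distribution computation of $L^\alpha\RieszK=C_\alpha\delta_0$, transfer to $u_{\delta,\eps}$ by duality, pointwise identification by continuity, and scaling by $w=\gamma z$. Your subordination (or exponential-damping) regularization in Step 1 usefully justifies the product $|\xi|^\alpha\cdot\mathrm{f.p.}|\xi|^{-\alpha}$, which the paper takes directly from the homogeneous-distribution formula; in Step 2, replace ``$L^\alpha$ commutes with convolution on $\mathcal S'$'' by the explicit duality chain $\langle \RieszK*\psi_{\delta,\eps},L^\alpha\chi\rangle=\langle\RieszK,L^\alpha(\psi_{\delta,\eps}*\chi)\rangle$, because $L^\alpha$ does not map $\mathcal S(\R)$ into itself.
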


\begin{proof}
Let $\RieszK(x)=|x|^{\alpha-1}$. The details are included because this identity is the point at which the singular Riesz kernel enters the argument.

\emph{Step 1: the Riesz identity in distributions.} With the Fourier convention under which
\[
  \widehat{L^\alpha f}(\xi)=-\widehat c_\alpha |\xi|^\alpha\widehat f(\xi),
  \qquad \widehat c_\alpha>0,
\]
the homogeneous-distribution formula gives
\[
  \widehat{\RieszK}(\xi)=-A_\alpha |\xi|^{-\alpha},
  \qquad A_\alpha>0.
\]
One obtains this formula by analytic continuation of the Fourier transform of $|x|^\lambda$, $0<\lambda<1$, and then setting $\lambda=\alpha-1$; see the classical references on homogeneous distributions and Riesz kernels \cite{GelfandShilov1964,Landkof1972}, and also the survey \cite{Kwasnicki2017}. Consequently,
\[
  \widehat{L^\alpha\RieszK}(\xi)=\widehat c_\alpha A_\alpha,
\]
and hence
\begin{equation}\label{eq:riesz-identity}
  L^\alpha\RieszK=C_\alpha\delta_0,
  \qquad C_\alpha>0,
\end{equation}
in $\mathcal S'(\R)$. The positivity of $C_\alpha$ is consistent with the limiting identity $\Delta |x|=2\delta_0$ when $\alpha=2$. Only the positivity and finiteness of this constant are used below.

\emph{Step 2: transfer to the mollified kernel.} Let $\chi\in\mathcal S(\R)$. The function $L^\alpha\chi$ decays at least of order $|x|^{-1-\alpha}$ at infinity, while $\RieszK(x)$ grows like $|x|^{\alpha-1}$; hence $\RieszK(x)L^\alpha\chi(x)$ is integrable at infinity. The stated decay follows from the Fourier multiplier representation of $L^\alpha$ on Schwartz functions, or directly by splitting the L\'evy integral into near and far regions. More explicitly, $\RieszK\in L^1_{\rm loc}(\R)$ has polynomial growth of order $\alpha-1$, whereas $\psi_{\delta,\eps}*L^\alpha\chi$ and $L^\alpha(\psi_{\delta,\eps}*\chi)$ decay at least as $O(|x|^{-1-\alpha})$; their products with $\RieszK$ therefore decay like $O(|x|^{-2})$ at infinity. Thus all pairings in the following display are ordinary absolutely convergent Lebesgue integrals. We compute the distribution $L^\alpha(\RieszK*\psi_{\delta,\eps})$ by duality against $\chi$. The commutation relation $\psi_{\delta,\eps}*L^\alpha\chi=L^\alpha(\psi_{\delta,\eps}*\chi)$ follows on $\mathcal S(\R)$ from the Fourier multiplier representation of $L^\alpha$, and the symmetry of the L\'evy measure gives the self-adjointness used below.
\begin{align*}
\langle L^\alpha(\RieszK*\psi_{\delta,\eps}),\chi\rangle
&=\langle \RieszK*\psi_{\delta,\eps},L^\alpha\chi\rangle\\
&=\langle \RieszK,\psi_{\delta,\eps}*L^\alpha\chi\rangle\\
&=\langle \RieszK,L^\alpha(\psi_{\delta,\eps}*\chi)\rangle\\
&=C_\alpha(\psi_{\delta,\eps}*\chi)(0).
\end{align*}
Here the third line uses that $\psi_{\delta,\eps}*\chi\in\mathcal S(\R)$ and that $L^\alpha$ is a Fourier multiplier. Since $\psi_{\delta,\eps}$ is even,
\[
  (\psi_{\delta,\eps}*\chi)(0)=\int_{\R}\psi_{\delta,\eps}(y)\chi(y)\dd y.
\]
Thus
\[
  \langle L^\alpha u_{\delta,\eps},\chi\rangle
  =\langle C_\alpha\psi_{\delta,\eps},\chi\rangle.
\]
This proves the distributional identity. The argument avoids a pointwise interchange of $L^\alpha$ and convolution at the singular kernel $\RieszK$, where the singularity of the unmollified kernel has to be treated distributionally.

\emph{Step 3: pointwise interpretation.} \Cref{prop:komatsu} gives $u_{\delta,\eps}\in C^2$ with bounded first and second derivatives for fixed $(\delta,\eps)$. Therefore the defining integral for $L^\alpha u_{\delta,\eps}(x)$ is absolutely convergent: near the origin the second-order Taylor remainder is controlled by $\|u_{\delta,\eps}''\|_\infty |z|^2$, and at infinity the growth $u_{\delta,\eps}(x)=O(1+|x|^{\alpha-1})$ is integrable against $|z|^{-1-\alpha}\dd z$. The same estimates give continuity in $x$ by dominated convergence on compact sets. Since both sides are continuous representatives of the same distribution, the equality holds pointwise.

For the scaled identity, change variables $w=\gamma z$. If $\gamma=0$, both sides vanish. If $\gamma\ne0$, the L\'evy measure is $\alpha$-homogeneous and contributes the factor $|\gamma|^\alpha$. The cutoff changes from $\1_{\{|z|\le1\}}$ to $\1_{\{|w|\le |\gamma|\}}$. The difference between this cutoff and $\1_{\{|w|\le1\}}$ contributes an odd integrand proportional to $w f'(x)$ over an annulus; since $\alpha>1$, this annular integral is absolutely convergent and vanishes by symmetry. Therefore $L^\alpha_\gamma f=|\gamma|^\alpha L^\alpha f$ for $f=u_{\delta,\eps}$, and the result follows.
\end{proof}

\section{Localization and a priori estimates}
\label{sec:localization}

For $n\ge1$, define
\[
  \tau_n:=\inf\{t\in[0,T]: |X_t|\vee |\wt X_t|\ge n\}\wedge T.
\]
Since $X$ and $\wt X$ are c\`adl\`ag on $[0,T]$, their paths are almost surely bounded on $[0,T]$, and therefore $\tau_n=T$ for all sufficiently large $n$, almost surely.

For later reference, decompose the three coefficient differences as
\begin{align}
b(s,X_s)-\wt b(s,\wt X_s)&=\Delta_s^b+\Gamma_s^b,
\label{eq:drift-coefficient-difference}\\
\Delta_s^b&:=b(s,X_s)-\wt b(s,X_s),\qquad
\Gamma_s^b:=\wt b(s,X_s)-\wt b(s,\wt X_s), \notag\\[2mm]
a(s,X_s)-\wt a(s,\wt X_s)&=\Delta_s^a+\Gamma_s^a,
\label{eq:diffusion-coefficient-difference}\\
\Delta_s^a&:=a(s,X_s)-\wt a(s,X_s),\qquad
\Gamma_s^a:=\wt a(s,X_s)-\wt a(s,\wt X_s), \notag\\[2mm]
\sigma(s,X_{s-})-\wt\sigma(s,\wt X_{s-})&=\Delta_s^\sigma+\Gamma_s^\sigma,
\label{eq:jump-coefficient-difference}\\
\Delta_s^\sigma&:=\sigma(s,X_{s-})-\wt\sigma(s,X_{s-}), \notag\\
\Gamma_s^\sigma&:=\wt\sigma(s,X_{s-})-\wt\sigma(s,\wt X_{s-}). \notag
\end{align}
In finite-variation integrals, $X_s$ and $X_{s-}$ may be interchanged because they differ only at the countable set of jump times.

The first localization tool is a maximal inequality for stable stochastic integrals. It provides the a priori finite stopped moment needed before Gronwall's inequality is applied.

\begin{lemma}[Maximal moment estimate for stable stochastic integrals]\label{lem:stable-max}
Let $\gamma=(\gamma_s)_{0\le s\le T}$ be predictable and satisfy
\[
  \E\int_0^T |\gamma_s|^\alpha\dd s<\infty.
\]
Define
\[
  M_t:=\int_0^t\int_{\R\setminus\{0\}}\gamma_s z\,\wt N(\md s,\md z).
\]
Then, for every $p\in(0,\alpha)$, there is a constant $C_{p,\alpha}$ such that
\[
  \E\sup_{0\le t\le T}|M_t|^p
  \le C_{p,\alpha}\left(\E\int_0^T |\gamma_s|^\alpha\dd s\right)^{p/\alpha}.
\]
\end{lemma}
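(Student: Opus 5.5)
The plan is to split the compensated Poisson integral at a level $\lambda>0$ and estimate the two pieces with different tools. We then optimize in $\lambda$ via a layer-cake (tail) argument, rather than attempting a direct Burkholder--Davis--Gundy estimate, since $|z|^2\wedge|z|$ against $\nu$ does not give the right homogeneity.

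Set $\Lambda:=\E\int_0^T|\gamma_s|^\alpha\dd s$. The case $\Lambda=0$ is trivial, so assume $\Lambda>0$. For $\lambda>0$, write $M=M^{\le}+M^{>}$, where $M^{\le}$ integrates over $\{|\gamma_s z|\le\lambda\}$ and $M^{>}$ over $\{|\gamma_s z|>\lambda\}$.

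\textbf{Small jumps.} By Doob's $L^2$ inequality,
\[
\E\sup_t|M_t^{\le}|^2\le 4\,\E\int_0^T\int_{|\gamma_s z|\le\lambda}\gamma_s^2z^2\,\nu(\md z)\dd s=C\lambda^{2-\alpha}\Lambda,
\]
by the substitution $w=\gamma_s z$ and $\alpha$-homogeneity of $\nu$.

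\textbf{Large jumps.} Since $\alpha>1$, $M^{>}$ is a finite-variation compensated integral. Its total variation is at most $\int\!\!\int|\gamma_s z|\1_{\{|\gamma_sz|>\lambda\}}(N+\nu\dd s)$, so
\[
\E\sup_t|M_t^{>}|\le 2\,\E\int_0^T\int_{|\gamma_s z|>\lambda}|\gamma_s z|\,\nu(\md z)\dd s=C\lambda^{1-\alpha}\Lambda.
\]
The symmetric compensation convention of the paper guarantees that no extra drift appears from the cutoff.

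\textbf{Tail bound and layer cake.} Choosing the same $\lambda=h$ in Chebyshev and Markov gives
\[
\Pbb\Bigl(\sup_t|M_t|>2h\Bigr)\le C\lambda^{2-\alpha}\Lambda h^{-2}+C\lambda^{1-\alpha}\Lambda h^{-1}=C\Lambda h^{-\alpha}.
\]
Then
\[
\E\sup_t|M_t|^p=\int_0^\infty p h^{p-1}\Pbb\Bigl(\sup_t|M_t|>h\Bigr)\dd h\le \int_0^{h_0}ph^{p-1}\dd h+C\Lambda\int_{h_0}^\infty ph^{p-1-\alpha}\dd h=h_0^p+C\Lambda h_0^{p-\alpha}.
\]
The second integral converges because $p<\alpha$. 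Taking $h_0=\Lambda^{1/\alpha}$ yields $C_{p,\alpha}\Lambda^{p/\alpha}$.

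\textbf{Main obstacle.} The delicate point is justifying that the truncated pieces are well defined and that $M=M^{\le}+M^{>}$ under only $\E\int|\gamma|^\alpha<\infty$. In particular, $\gamma$ is random, so the cutoff sets are predictable and random. I would handle this by first proving the bound for bounded predictable $\gamma$, where everything is square integrable and the splitting is elementary. I would then pass to general $\gamma$ by truncation $\gamma\1_{\{|\gamma|\le k\}}$: the tail inequality shows the approximating integrals are Cauchy in probability uniformly in time, identifying the limit with $M$. Fatou's lemma then transfers the estimate.
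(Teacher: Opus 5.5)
Your proposal is correct and is essentially the paper's proof: the same split at $|\gamma_s z|=\lambda$, Doob's inequality for the small jumps, the weak-type bound $\Pbb(\sup_{t}|M_t|>h)\le C\Lambda h^{-\alpha}$, the layer-cake integration with $h_0=\Lambda^{1/\alpha}$, and the same localization by truncating $\gamma$ followed by Fatou (one small slip: for bounded $\gamma$ the full integral $M$ is not square integrable, since $\int_{|z|>1}z^2\,\nu(\md z)=\infty$, but your argument only uses $M^{\le}\in L^2$ and $M^{>}\in L^1$, so nothing breaks). The one variation is the large-jump piece: you bound the expected total variation by $C\lambda^{1-\alpha}\Lambda$ (using $\alpha>1$) and apply Markov, whereas the paper observes that the compensator of $M^{>}$ vanishes by symmetry and bounds $\Pbb(M^{>}\not\equiv0)$ by the expected number of jumps with $|\gamma_s z|>\lambda$, namely $C\lambda^{-\alpha}\Lambda$; both give the same $h^{-\alpha}$ tail.
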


\begin{proof}
First suppose that $\gamma$ is bounded and predictable, and that $\int_0^T|\gamma_s|^\alpha\dd s$ is bounded. The estimate proved below has a constant independent of these bounds. For a general predictable $\gamma$, set $\gamma^{m}_s:=\gamma_s\mathbf 1_{\{|\gamma_s|\le m\}}\mathbf 1_{\{\int_0^s|\gamma_r|^\alpha\dd r\le m\}}$. The bounded case applies to $\gamma^m$. The corresponding stochastic integrals converge in probability uniformly on $[0,T]$ because the same tail estimate applied to $\gamma^m-\gamma^k$ makes them Cauchy. Since $\int_0^T|\gamma_s^m-\gamma_s|^\alpha\dd s\to0$ in $L^1$ by dominated convergence, Fatou's lemma for the nonnegative variables $\sup_{0\le t\le T}|M_t^m|^p$ yields the general case. It remains to prove the estimate in the bounded localized case.

Put $K_\gamma:=\E\int_0^T|\gamma_s|^\alpha\dd s$. If $K_\gamma=0$, then the stochastic integral vanishes identically. For $\lambda>0$ decompose the jumps according to $|\gamma_s z|\le\lambda$ and $|\gamma_s z|>\lambda$:
\[
  M=M^{1,\lambda}+M^{2,\lambda}.
\]
For the small-jump part, Doob's inequality and the isometry give
\begin{align*}
\Pbb\left(\sup_{0\le t\le T}|M_t^{1,\lambda}|>\lambda\right)
&\le \lambda^{-2}\E\int_0^T\int_{|\gamma_s z|\le\lambda}|\gamma_s z|^2\nu(\md z)\dd s\\
&= C \lambda^{-2}\E\int_0^T |\gamma_s|^2\left( \frac{\lambda}{|\gamma_s|}\right)^{2-\alpha}\dd s
\le C K_\gamma\lambda^{-\alpha},
\end{align*}
with the convention that the integrand is zero on $\{\gamma_s=0\}$.

For the large-jump part, the compensator term is zero. Indeed, for fixed $s$ with $\gamma_s\ne0$,
\[
  \gamma_s\int_{|\gamma_s z|>\lambda}z\nu(\md z)=0,
\]
because the domain is symmetric and the integrand is odd; the integral is absolutely convergent since it is over $|z|>\lambda/|\gamma_s|$ and $\alpha>1$. Hence $M^{2,\lambda}$ is a non-compensated jump sum, and it is nonzero only if at least one jump satisfying $|\gamma_s z|>\lambda$ occurs. Therefore, by Markov's inequality applied to the corresponding jump count,
\[
\Pbb\left(M^{2,\lambda}\not\equiv0\text{ on }[0,T]\right)
\le \E\int_0^T\int_{|\gamma_s z|>\lambda}\nu(\md z)\dd s
\le C K_\gamma\lambda^{-\alpha}.
\]
Combining the two estimates and replacing $2\lambda$ by $\lambda$ gives
\[
  \Pbb\left(\sup_{0\le t\le T}|M_t|>\lambda\right)
  \le C K_\gamma\lambda^{-\alpha}.
\]
By the layer-cake representation, for any $\lambda_0>0$,
\begin{align*}
\E\sup_{0\le t\le T}|M_t|^p
&=\int_0^\infty p\lambda^{p-1}\Pbb\left(\sup_{0\le t\le T}|M_t|>\lambda\right)\dd\lambda\\
&\le C\lambda_0^p+C_{p,\alpha}K_\gamma\lambda_0^{p-\alpha}.
\end{align*}
Choosing $\lambda_0=K_\gamma^{1/\alpha}$ proves the estimate. The localization step described at the beginning completes the proof for general predictable $\gamma$ satisfying the stated integrability condition; see also the compensated-Poisson martingale criterion in \cite[Ch.~II]{IkedaWatanabe1989}.
\end{proof}

\begin{lemma}[A priori finite stopped moment]\label{lem:apriori-stopped}
For each $n\ge1$,
\[
  \sup_{0\le t\le T}\E|Y_{t\wedge\tau_n}|^{\alpha-1}<\infty.
\]
\end{lemma}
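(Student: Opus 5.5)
The plan is to bound each term of the stopped equation for $Y$ separately in $L^{\alpha-1}$, using the a priori bound $|X_{s-}|\vee|\wt X_{s-}|\le n$ before $\tau_n$ together with the finiteness of $B,A,S$ and the integrability of the moduli. The exit jump at $\tau_n$, which may make $Y_{\tau_n}$ unbounded, will be absorbed by the maximal inequality of Lemma~\ref{lem:stable-max}. Subtracting \eqref{eq:Xtilde} from \eqref{eq:X} and stopping gives
\[
Y_{t\wedge\tau_n}=(x_0-\wt x_0)+\int_0^{t\wedge\tau_n}\bigl(\Delta^b_s+\Gamma^b_s\bigr)\dd s+\int_0^{t\wedge\tau_n}\bigl(\Delta^a_s+\Gamma^a_s\bigr)\dd W_s+\int_0^{t\wedge\tau_n}\bigl(\Delta^\sigma_s+\Gamma^\sigma_s\bigr)\dd Z_s .
\]
Since $0<\alpha-1<1$, the map $r\mapsto r^{\alpha-1}$ is subadditive. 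It therefore suffices to bound the $(\alpha-1)$-th moment of each of the four pieces uniformly in $t$.

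\emph{Integrand bounds.} For $s<\tau_n$ (equivalently $s\le\tau_n$ when evaluating at $X_{s-},\wt X_{s-}$) one has $|X_{s-}|,|\wt X_{s-}|\le n$, hence $|Y_{s-}|\le 2n$. By Assumption~\ref{ass:perturbed}, on this set
\[
|\Gamma^b_s|\le 2n\,\ell_b(s),\qquad |\Gamma^a_s|\le 2n\,\ell_a(s),\qquad |\Gamma^\sigma_s|\le \ell_\sigma(s)\bigl((2n)^{\wt\eta}\vee 2n\bigr).
\]
The direct errors are controlled in expectation by the definitions \eqref{eq:B-exp}--\eqref{eq:S-exp}:
\[
\E\int_0^T|\Delta^b_s|\dd s=B,\qquad \E\int_0^T|\Delta^a_s|^2\dd s=A^2,\qquad \E\int_0^T|\Delta^\sigma_s|^\alpha\dd s=S^\alpha.
\]
For the last identity I would replace $X_{s-}$ by $X_s$, which is allowed since they differ only at countably many $s$. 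No linear-growth bound on the baseline coefficients is needed here, because only differences appear.

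\emph{The four pieces.}
\begin{itemize}[leftmargin=*]
\item The initial term is the constant $|x_0-\wt x_0|^{\alpha-1}$.
\item For the drift, Jensen's inequality gives $\E|\cdot|^{\alpha-1}\le(\E|\cdot|)^{\alpha-1}$, and the expectation is at most $B+2n\|\ell_b\|_{L^1}<\infty$.
\item For the Brownian term, the stopped integrand $\1_{\{s\le\tau_n\}}(\Delta^a_s+\Gamma^a_s)$ has $\E\int_0^T|\cdot|^2\dd s\le 2A^2+8n^2\|\ell_a\|_{L^2}^2<\infty$. The It\^o isometry and Jensen's inequality then bound its $(\alpha-1)$-th moment uniformly in $t$.
\item For the stable term, I would set $\gamma_s:=\1_{\{s\le\tau_n\}}(\Delta^\sigma_s+\Gamma^\sigma_s)$. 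This is predictable, because $\1_{\{s\le\tau_n\}}$ is left-continuous and adapted and the coefficients are evaluated at left limits. It satisfies $\E\int_0^T|\gamma_s|^\alpha\dd s\le C\bigl(S^\alpha+n^\alpha\|\ell_\sigma\|_{L^\alpha}^\alpha\bigr)<\infty$. Lemma~\ref{lem:stable-max} with $p=\alpha-1<\alpha$ then gives $\E\sup_{t}|M_t|^{\alpha-1}<\infty$, and this in particular controls $\sup_t\E|M_{t\wedge\tau_n}|^{\alpha-1}$.
\end{itemize}

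\emph{Main obstacle.} The only delicate point is the jump at time $\tau_n$. The post-jump value $\wt X_{\tau_n}$ (or $X_{\tau_n}$) is not bounded by $n$, so one cannot simply bound $|Y_{t\wedge\tau_n}|\le 2n$. This is why the stable contribution must be handled through the stochastic integral and Lemma~\ref{lem:stable-max}, rather than through a pathwise bound. The construction works because the integrand only involves the pre-jump values $X_{s-},\wt X_{s-}$, which are bounded on $[0,\tau_n]$. I would also check that the stopped integrals coincide with the integrals of the stopped integrands, which is standard for predictable integrands and stopping times.
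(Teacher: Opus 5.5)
Your proposal is correct and follows the paper's proof. It uses subadditivity of $r\mapsto r^{\alpha-1}$ to split $Y_{t\wedge\tau_n}$ into initial, drift, Brownian and stable pieces, and the bounds $|\Gamma^b_s|\le 2n\ell_b(s)$, $|\Gamma^a_s|\le 2n\ell_a(s)$, $|\Gamma^\sigma_s|\le 2n\ell_\sigma(s)$ before $\tau_n$, together with Jensen for the drift, It\^o's isometry plus Jensen for the Brownian part, and Lemma~\ref{lem:stable-max} with $p=\alpha-1$ for the stable part; your remarks on the predictability of the stopped integrand and on the exit jump at $\tau_n$ only make explicit what the paper leaves implicit.
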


\begin{proof}
Write $p:=\alpha-1\in(0,1)$. Since $r\mapsto r^p$ is subadditive,
\[
  \E|Y_{t\wedge\tau_n}|^p
  \le |Y_0|^p+\E|D_t^{(n)}|^p+\E|M_t^{W,n}|^p+\E|M_t^{Z,n}|^p,
\]
where
\begin{align*}
D_t^{(n)}&=\int_0^{t\wedge\tau_n}(b(s,X_s)-\wt b(s,\wt X_s))\dd s,\\
M_t^{W,n}&=\int_0^{t\wedge\tau_n}(a(s,X_s)-\wt a(s,\wt X_s))\dd W_s,\\
M_t^{Z,n}&=\int_0^{t\wedge\tau_n}\int_{\R\setminus\{0\}}(\sigma(s,X_{s-})-\wt\sigma(s,\wt X_{s-}))z\,\wt N(\md s,\md z).
\end{align*}
For Lebesgue-a.e. $s$ with $s\le\tau_n$, and for predictable jump integrands evaluated at left limits, the stopped variables are bounded by $n$ before the exit jump. Hence the relevant path difference is bounded by $2n$. Using \eqref{eq:drift-coefficient-difference}--\eqref{eq:jump-coefficient-difference} and Assumption~\ref{ass:perturbed}, we get
\begin{align*}
|\Gamma_s^b|&\le 2n \ell_b(s),\\
|\Gamma_s^a|&\le 2n \ell_a(s),\\
|\Gamma_s^\sigma|&\le 2n \ell_\sigma(s),
\end{align*}
where we used $\wt\eta\le1$, so $(2n)^{\wt\eta}\vee 2n=2n$ for $n\ge1$.

For the drift term,
\[
  \E|D_t^{(n)}|
  \le B+2n\|\ell_b\|_{L^1(0,T)}<\infty,
\]
and Jensen's inequality for the concave function $r^p$ yields $\E|D_t^{(n)}|^p<\infty$ uniformly in $t$. For the Brownian term, It\^o's isometry gives
\[
  \E|M_t^{W,n}|^2
  \le C\left(A^2+n^2\|\ell_a\|_{L^2(0,T)}^2\right)<\infty,
\]
and again Jensen yields a finite $p$-moment. For the stable term,
\[
  \E\int_0^{T\wedge\tau_n}|\sigma(s,X_s)-\wt\sigma(s,\wt X_s)|^\alpha\dd s
  \le C\left(S^\alpha+(2n)^\alpha\|\ell_\sigma\|_{L^\alpha(0,T)}^\alpha\right)<\infty.
\]
Applying Lemma~\ref{lem:stable-max} with exponent $p=\alpha-1$ gives
\[
  \E\sup_{0\le t\le T}|M_t^{Z,n}|^p<\infty.
\]
Combining these estimates proves the claim.
\end{proof}

\begin{lemma}[Localized martingale property]\label{lem:localized-martingales}
For each fixed $\delta\ge2$, $\eps\in(0,1)$ and $n\ge1$, define
\[
M_t^{W,n}:=\int_0^{t\wedge\tau_n}u'_{\delta,\eps}(Y_s)
\bigl(a(s,X_s)-\wt a(s,\wt X_s)\bigr)\dd W_s
\]
and
\[
M_t^{Z,n}:=\int_0^{t\wedge\tau_n}\int_{\R\setminus\{0\}}
\{u_{\delta,\eps}(Y_{s-}+\gamma_s z)-u_{\delta,\eps}(Y_{s-})\}\,\wt N(\md s,\md z),
\]
where $\gamma_s:=\sigma(s,X_{s-})-\wt\sigma(s,\wt X_{s-})$. Then $M^{W,n}$ and $M^{Z,n}$ are true martingales.
\end{lemma}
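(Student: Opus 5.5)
For the Brownian part I will check the square-integrability criterion $\E\int_0^{T\wedge\tau_n}|u_{\delta,\eps}'(Y_s)|^2|a(s,X_s)-\wt a(s,\wt X_s)|^2\dd s<\infty$. By \eqref{eq:global-derivative-bounds}, $u'_{\delta,\eps}$ is bounded for fixed $(\delta,\eps)$, with bound $C\delta/(\eps^{2-\alpha}\log\delta)$. It therefore suffices to bound $\E\int_0^{T\wedge\tau_n}|\Delta_s^a+\Gamma_s^a|^2\dd s$. By the decomposition \eqref{eq:diffusion-coefficient-difference}, this is at most $2A^2+2(2n)^2\|\ell_a\|_{L^2(0,T)}^2$. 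Here I use the bound $|\Gamma^a_s|\le 2n\ell_a(s)$ for Lebesgue-a.e.\ $s\le\tau_n$, which was derived in the proof of Lemma~\ref{lem:apriori-stopped}. The stopped It\^o integral is then an $L^2$-bounded martingale.

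For the jump part I will use the standard criterion for compensated Poisson integrals (cf.\ \cite[Ch.~II]{IkedaWatanabe1989}). Set $H_s(z):=u_{\delta,\eps}(Y_{s-}+\gamma_s z)-u_{\delta,\eps}(Y_{s-})$. It suffices to show
\[
\E\int_0^{T\wedge\tau_n}\int_{\R\setminus\{0\}}\bigl(|H_s(z)|^2\wedge|H_s(z)|\bigr)\nu(\md z)\dd s<\infty .
\]
Under this condition the integral splits into a square-integrable martingale (small jumps) and a compensated integral with integrable total variation (large jumps), and both are true martingales.

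To bound $H$ I will use two estimates. The mean value theorem gives $|H_s(z)|\le\|u'_{\delta,\eps}\|_\infty|\gamma_s||z|$. Subadditivity of $r\mapsto r^{\alpha-1}$ together with \eqref{eq:approx-upper} and \eqref{eq:approx-lower} gives
\[
|H_s(z)|\le|\gamma_s z|^{\alpha-1}+2\eps^{\alpha-1}\le C(|\gamma_s z|^{\alpha-1}+1).
\]
I then split the $z$-integral at $|z|=1/|\gamma_s|$ when $\gamma_s\neq0$; when $\gamma_s=0$, $H\equiv0$. On $|\gamma_s z|\le1$, I bound $|H|^2\wedge|H|\le|H|^2\le C|\gamma_s z|^2$, and integrating against $\nu$ gives $C|\gamma_s|^\alpha$. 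On $|\gamma_s z|>1$, I bound $|H|^2\wedge|H|\le|H|\le C|\gamma_s z|^{\alpha-1}$ and integrate against $\nu$. This is finite because $\alpha-1<\alpha$, and again gives $C|\gamma_s|^\alpha$. Altogether the criterion reduces to $\E\int_0^{T\wedge\tau_n}|\gamma_s|^\alpha\dd s<\infty$. This was already shown in Lemma~\ref{lem:apriori-stopped} to be at most $C(S^\alpha+(2n)^\alpha\|\ell_\sigma\|^\alpha_{L^\alpha})$, again using $\gamma_s=\Delta^\sigma_s+\Gamma^\sigma_s$ at left limits before $\tau_n$.

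The main point of care is the jump term. The large-jump part is not square integrable, so I must avoid the $L^2$ criterion and use the mixed $|H|^2\wedge|H|$ criterion. This requires that $u_{\delta,\eps}$ grows only like $|x|^{\alpha-1}$, with exponent below $\alpha$; that growth is exactly what Lemma~\ref{lem:approx} supplies. A second, smaller point is that the bounds must hold at left limits $Y_{s-}$, $X_{s-}$ for $s\le\tau_n$, including at the exit jump. This is handled by predictability: the left limits are bounded by $n$ on $(0,\tau_n]$.
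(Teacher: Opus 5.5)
Your proof is correct and follows the paper's argument. For the Brownian integral you use boundedness of $u'_{\delta,\eps}$ together with the $L^2$ bound via $A$ and $\ell_a$. For the jump integral you use the $|H|^2\wedge|H|$ compensated-Poisson criterion, reduced to $\E\int_0^{T\wedge\tau_n}|\gamma_s|^\alpha\dd s<\infty$.

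The only difference is on large jumps: the paper uses the Lipschitz bound $|H_s(z)|\le\|u'_{\delta,\eps}\|_\infty|\gamma_s||z|$ for all $z$, which already suffices because $\alpha>1$. So your $(\alpha-1)$-growth bound is valid but not required, contrary to your closing remark.
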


\begin{proof}
For the Brownian term, \Cref{prop:komatsu} gives $\|u_{\delta,\eps}'\|_\infty<\infty$ for fixed $(\delta,\eps)$. On $[0,\tau_n]$,
\[
|a(s,X_s)-\wt a(s,\wt X_s)|^2
\le C\left(|\Delta_s^a|^2+n^2\ell_a(s)^2\right),
\]
and the right-hand side is integrable by the definition of $A$ and $\ell_a\in L^2(0,T)$. Hence the stopped Brownian stochastic integral is square-integrable.

For the jump term, set
\[
  F_s(z):=u_{\delta,\eps}(Y_{s-}+\gamma_s z)-u_{\delta,\eps}(Y_{s-}).
\]
On $[0,\tau_n]$,
\[
  |\gamma_s|^\alpha
  \le C\left(|\Delta_s^\sigma|^\alpha+(2n)^\alpha \ell_\sigma(s)^\alpha\right),
\]
which is integrable in expectation. Since $u_{\delta,\eps}$ is Lipschitz for fixed $(\delta,\eps)$,
\[
|F_s(z)|\le \|u_{\delta,\eps}'\|_\infty |\gamma_s||z|.
\]
The scaling estimate
\[
\int_{\R\setminus\{0\}}
\left((\lambda |z|)^2\wedge \lambda |z|\right)\nu(\md z)
\le C\lambda^\alpha,
\qquad \lambda\ge0,
\]
therefore implies
\[
\E\int_0^{T\wedge\tau_n}\int_{\R\setminus\{0\}}
\left(|F_s(z)|^2\wedge |F_s(z)|\right)\nu(\md z)\dd s<\infty.
\]
The compensated-Poisson integrability criterion, see \cite[Ch.~II]{IkedaWatanabe1989}, gives the true martingale property.
\end{proof}

\section{Proof of the fundamental estimate}
\label{sec:proof-fundamental}

\begin{proof}[Proof of Proposition~\ref{prop:fundamental-delta}]
Applying It\^o's formula for jump processes (for instance \cite[Theorem~4.4.7]{Applebaum2009}) to $u_{\delta,\eps}(Y_{t\wedge\tau_n})$ gives
\begin{equation}\label{eq:ito}
  u_{\delta,\eps}(Y_{t\wedge\tau_n})=u_{\delta,\eps}(Y_0)+M_{t}^{W,n}+M_{t}^{Z,n}+I_t^{b,n}+I_t^{a,n}+I_t^{\sigma,n},
\end{equation}
where
\[
I_t^{b,n}:=\int_0^{t\wedge\tau_n} u_{\delta,\eps}'(Y_s)
\bigl(b(s,X_s)-\wt b(s,\wt X_s)\bigr)\dd s,
\]
\[
I_t^{a,n}:=\frac12\int_0^{t\wedge\tau_n} u_{\delta,\eps}''(Y_s)
\bigl(a(s,X_s)-\wt a(s,\wt X_s)\bigr)^2\dd s,
\]
and, by Lemma~\ref{lem:stable-generator},
\[
I_t^{\sigma,n}:=C_\alpha\int_0^{t\wedge\tau_n} |\sigma(s,X_s)-\wt\sigma(s,\wt X_s)|^\alpha
\psi_{\delta,\eps}(Y_s)\dd s.
\]
The finite-variation jump term is the compensator term associated with the cutoff $\1_{\{|z|\le1\}}$ in the stable generator. If another deterministic cutoff is used after the scaling change of variables, the difference is an odd annular integral and is zero by the symmetry of $\nu$, as proved in Lemma~\ref{lem:stable-generator}. In all $\dd s$ integrals below, $Y_s$ and $Y_{s-}$ can be interchanged, since the set of jump times is at most countable. Lemma~\ref{lem:localized-martingales} gives that the two stopped stochastic-integral terms are true martingales.

In the following estimates, the stopping is kept explicitly in the first display of each term. In subsequent lines we use the shorthand that every finite-variation integral is multiplied by $\1_{\{s\le\tau_n\}}$. On this event, $Y_s=Y_{s\wedge\tau_n}$ for $\dd s$-a.e. $s$, and the direct coefficient-error terms are still controlled by $B,A,S$.

\subsection{Drift term}
\label{subsec:drift-term}

By \eqref{eq:drift-coefficient-difference} and \eqref{eq:b-lip},
\[
  |\Gamma_s^b|\le \ell_b(s)|Y_s|.
\]
The direct drift error is controlled by the global bound on $u'_{\delta,\eps}$. By \eqref{eq:global-derivative-bounds},
\begin{align*}
\E\int_0^t \1_{\{s\le\tau_n\}} |u_{\delta,\eps}'(Y_s)||\Delta_s^b|\dd s
&\le C\frac{\delta}{\eps^{2-\alpha}\log\delta}
\int_0^T\E |b(s,X_s)-\wt b(s,X_s)|\dd s\\
&=C\frac{\delta}{\log\delta}\frac{B}{\eps^{2-\alpha}}.
\end{align*}
The remaining drift part is recursive in the distance between the two paths. For this term, \eqref{eq:delta-first-product} gives
\begin{align*}
\E\int_0^t \1_{\{s\le\tau_n\}} |u_{\delta,\eps}'(Y_s)||\Gamma_s^b|\dd s
&\le \int_0^t \ell_b(s)\E\left[|u_{\delta,\eps}'(Y_{s\wedge\tau_n})||Y_{s\wedge\tau_n}|\right]\dd s\\
&\le C\int_0^t \ell_b(s)\E |Y_{s\wedge\tau_n}|^{\alpha-1}\dd s
+C\frac{\delta}{\log\delta}\eps^{\alpha-1}\|\ell_b\|_{L^1(0,T)}.
\end{align*}
Thus, for stopped processes,
\begin{equation}\label{eq:drift-estimate}
\E |I_{t}^{b,n}|
\le C\frac{\delta}{\log\delta}\frac{B}{\eps^{2-\alpha}}
+C\int_0^t \ell_b(s)\E|Y_{s\wedge\tau_n}|^{\alpha-1}\dd s
+C\frac{\delta}{\log\delta}\eps^{\alpha-1}.
\end{equation}

\subsection{Brownian correction term}
\label{subsec:brownian-term}

By \eqref{eq:diffusion-coefficient-difference} and \eqref{eq:a-lip},
\[
  |\Gamma_s^a|\le \ell_a(s)|Y_s|.
\]
Since $|x+y|^2\le2|x|^2+2|y|^2$,
\[
|I_t^{a,n}|\le C\int_0^{t\wedge\tau_n} |u_{\delta,\eps}''(Y_s)||\Delta_s^a|^2\dd s
+C\int_0^{t\wedge\tau_n} |u_{\delta,\eps}''(Y_s)||\Gamma_s^a|^2\dd s.
\]
The direct Brownian coefficient error is controlled by the global bound on $u''_{\delta,\eps}$. By \eqref{eq:global-derivative-bounds},
\begin{align*}
\E\int_0^{T\wedge\tau_n} |u_{\delta,\eps}''(Y_s)||\Delta_s^a|^2\dd s
&\le C\frac{\delta^2}{\eps^{3-\alpha}\log\delta}
\int_0^T\E |a(s,X_s)-\wt a(s,X_s)|^2\dd s\\
&=C\frac{\delta^2}{\log\delta}\frac{A^2}{\eps^{3-\alpha}}.
\end{align*}
The path-difference part of the Brownian correction is the reason for using the product estimate on $u''_{\delta,\eps}$. By \eqref{eq:delta-second-product},
\begin{align*}
\E\int_0^{t\wedge\tau_n} |u_{\delta,\eps}''(Y_s)||\Gamma_s^a|^2\dd s
&\le \int_0^t \ell_a(s)^2\E\left[|u_{\delta,\eps}''(Y_{s\wedge\tau_n})|Y_{s\wedge\tau_n}^2\right]\dd s\\
&\le C\int_0^t \ell_a(s)^2\E |Y_{s\wedge\tau_n}|^{\alpha-1}\dd s
+C\frac{\delta^2}{\log\delta}\eps^{\alpha-1}\int_0^t \ell_a(s)^2\dd s.
\end{align*}
Consequently,
\begin{equation}\label{eq:brownian-estimate}
\E |I_t^{a,n}|
\le C\frac{\delta^2}{\log\delta}\frac{A^2}{\eps^{3-\alpha}}
+C\int_0^t \ell_a(s)^2\E|Y_{s\wedge\tau_n}|^{\alpha-1}\dd s
+C\frac{\delta^2}{\log\delta}\eps^{\alpha-1}.
\end{equation}

\subsection{Stable jump term}
\label{subsec:stable-term}

Recall \eqref{eq:jump-coefficient-difference}. On $\{s\le\tau_n\}$ and on the support of $\psi_{\delta,\eps}(Y_s)$, one has $Y_s=Y_{s\wedge\tau_n}$, $|Y_{s\wedge\tau_n}|<\eps<1$, and therefore
\[
  |\Gamma_s^\sigma|\le \ell_\sigma(s)|Y_s|^{\wt\eta}.
\]
The term $I_t^{\sigma,n}$ is nonnegative because $C_\alpha>0$ and $\psi_{\delta,\eps}\ge0$. The stable compensator is localized by the support of $\psi_{\delta,\eps}$. Using $|x+y|^\alpha\le2^{\alpha-1}(|x|^\alpha+|y|^\alpha)$, \eqref{eq:psi0-bound}, and the definition of $S$,
\begin{align*}
\E I_t^{\sigma,n}
&\le C\E\int_0^{t\wedge\tau_n}|\Delta_s^\sigma|^\alpha\psi_{\delta,\eps}(Y_s)\dd s
+C\E\int_0^{t\wedge\tau_n}|\Gamma_s^\sigma|^\alpha\psi_{\delta,\eps}(Y_s)\dd s\\
&\le C\frac{\delta}{\eps\log\delta}S^\alpha
+C\int_0^T \ell_\sigma(s)^\alpha\E\left[|Y_{s\wedge\tau_n}|^{\alpha\wt\eta}\psi_{\delta,\eps}(Y_{s\wedge\tau_n})\right]\dd s.
\end{align*}
On the support of $\psi_{\delta,\eps}(Y_{s\wedge\tau_n})$, one has
$\eps/\delta<|Y_{s\wedge\tau_n}|<\eps$ and
\[
|Y_{s\wedge\tau_n}|^{\alpha\wt\eta}\psi_{\delta,\eps}(Y_{s\wedge\tau_n})
\le C\frac{|Y_{s\wedge\tau_n}|^{\alpha\wt\eta-1}}{\log\delta}
\le C\frac{\eps^{\alpha\wt\eta-1}}{\log\delta},
\]
where $\alpha\wt\eta-1\ge0$. Thus
\begin{equation}\label{eq:stable-estimate}
\E I_t^{\sigma,n}
\le C\frac{\delta}{\log\delta}\frac{S^\alpha}{\eps}
+C\frac{\eps^{\alpha\wt\eta-1}}{\log\delta}\|\ell_\sigma\|_{L^\alpha(0,T)}^\alpha.
\end{equation}

\subsection{Conclusion of the proof}
\label{subsec:proof-conclusion}

By Lemma~\ref{lem:approx},
\[
  |Y_{t\wedge\tau_n}|^{\alpha-1}\le u_{\delta,\eps}(Y_{t\wedge\tau_n})+\eps^{\alpha-1},
  \qquad
  u_{\delta,\eps}(Y_0)\le |Y_0|^{\alpha-1}+\eps^{\alpha-1}.
\]
Let
\[
  g_n(t):=\E|Y_{t\wedge\tau_n}|^{\alpha-1}.
\]
By Lemma~\ref{lem:apriori-stopped}, $g_n$ is finite and bounded on $[0,T]$ for each fixed $n$. Taking expectations in \eqref{eq:ito}, using Lemma~\ref{lem:localized-martingales} to remove the martingale terms, and applying \eqref{eq:drift-estimate}, \eqref{eq:brownian-estimate}, and \eqref{eq:stable-estimate}, one obtains
\begin{align*}
g_n(t)
\le C\Bigg[&|Y_0|^{\alpha-1}
+\eps^{\alpha-1}
+\frac{\delta}{\log\delta}\frac{B}{\eps^{2-\alpha}}
+\frac{\delta^2}{\log\delta}\frac{A^2}{\eps^{3-\alpha}}\\
&+\frac{\delta}{\log\delta}\frac{S^\alpha}{\eps}
+\frac{\delta^2}{\log\delta}\eps^{\alpha-1}
+\frac{\eps^{\alpha\wt\eta-1}}{\log\delta}
\Bigg]
+C\int_0^t\left(\ell_b(s)+\ell_a(s)^2\right)g_n(s)\dd s.
\end{align*}
Since $\ell_b+\ell_a^2\in L^1(0,T)$ and $g_n$ is finite, Gronwall's inequality gives the same bound uniformly in $n$. Because $\tau_n=T$ eventually almost surely, $Y_{t\wedge\tau_n}\to Y_t$ almost surely for each $t$. Fatou's lemma yields the same estimate for $\E|Y_t|^{\alpha-1}$. Taking the supremum over $t\in[0,T]$ proves Proposition~\ref{prop:fundamental-delta}.
\end{proof}

\section{Convergence in probability}
\label{sec:probability}

The expectation estimate also yields a tail estimate for the time-uniform pathwise distance. The proof uses stopped quasi-martingales and removes the stopping only at the final step.

For an integrable adapted c\`adl\`ag process $Q=(Q_t)_{0\le t\le T}$, define its mean variation by
\[
  V_T(Q):=\sup_\pi\sum_{i=0}^{m-1}
  \E\left|\E\left[Q_{t_{i+1}}-Q_{t_i}\mid\F_{t_i}\right]\right|,
\]
where the supremum is taken over all partitions $\pi:0=t_0<t_1<\cdots<t_m=T$. If $V_T(Q)<\infty$, then $Q$ is called a quasi-martingale.

The following classical maximal inequality for nonnegative quasi-martingales is used; see Kurtz~\cite[Lemma~5.3]{Kurtz1991}, Protter~\cite[Chapter~III]{Protter2004}, and Meyer--Zheng~\cite{MeyerZheng1984}. A short proof is included for completeness.

\begin{lemma}[Maximal inequality for nonnegative quasi-martingales]\label{lem:qm-max}
Let $R=(R_t)_{0\le t\le T}$ be a nonnegative c\`adl\`ag adapted process such that $R_t$ is integrable for each $t$ and $V_T(R)<\infty$. Then, for every $h>0$,
\[
  \Pbb\left(\sup_{0\le t\le T}R_t>h\right)
  \le \frac{V_T(R)+\E R_T}{h}.
\]
\end{lemma}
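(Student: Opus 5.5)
We will prove the inequality through the Doob--Meyer/Rao decomposition of the quasi-martingale $R$ followed by maximal inequalities for each part. The most direct alternative avoids the decomposition theorem entirely and works on finite grids with a stopping-time argument; we adopt that version.

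\emph{Discretization.} Fix $h>0$ and a finite partition $\pi:0=t_0<\dots<t_m=T$. Let
\[
  \tau:=\min\{t_i: R_{t_i}>h\},
\]
with $\tau=T$ if no such index exists. On $\{\max_i R_{t_i}>h\}$ we have $R_\tau>h$. Since $R\ge0$, Markov's inequality gives $\Pbb(\max_i R_{t_i}>h)\le h^{-1}\E R_\tau$. The key step is the discrete optional-sampling estimate
\[
  \E R_\tau\le \E R_T+\sum_{i=0}^{m-1}\E\bigl|\E[R_{t_{i+1}}-R_{t_i}\mid\F_{t_i}]\bigr|.
\]
To prove it, write
\[
  R_T-R_\tau=\sum_{i}\1_{\{\tau\le t_i\}}(R_{t_{i+1}}-R_{t_i}).
\]
Since $\{\tau\le t_i\}\in\F_{t_i}$, conditioning on $\F_{t_i}$ yields
\[
  \E R_T-\E R_\tau=\sum_i\E\bigl[\1_{\{\tau\le t_i\}}\E[R_{t_{i+1}}-R_{t_i}\mid\F_{t_i}]\bigr]\ge-\sum_i\E\bigl|\E[\cdots\mid\F_{t_i}]\bigr|.
\]
Integrability of each $R_{t_i}$ makes every term finite. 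The sum on the right is bounded by $V_T(R)$ by definition of the supremum over partitions. Consequently,
\[
  \Pbb\Bigl(\max_i R_{t_i}>h\Bigr)\le \frac{V_T(R)+\E R_T}{h}
\]
uniformly in $\pi$.

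\emph{Passage to the continuum.} Take an increasing sequence of partitions $\pi_k$ whose union $D$ is dense in $[0,T]$ and contains $T$. The events $\{\max_{t\in\pi_k}R_t>h\}$ increase to $\{\sup_{t\in D}R_t>h\}$, so continuity from below gives the same bound for $\sup_{t\in D}R_t$. Because $R$ is c\`adl\`ag, its supremum over $[0,T]$ equals its supremum over $D$. Indeed, for any $t<T$, $R_t$ is a limit from the right of values on $D$, and $T\in D$. Hence $\sup_{[0,T]}R=\sup_D R$ pathwise, which proves the lemma with strict inequality $>h$ as stated.

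\emph{Main obstacle.} The only delicate point is the optional-sampling step for a process that is not a supermartingale. This is handled by the telescoping identity with predictable indicators, which is why the mean-variation quantity is defined via $\E|\E[\cdot\mid\F_{t_i}]|$. A secondary point is ensuring that the final partition point $T$ is included so that $\E R_T$ appears. No uniform-integrability issue arises, since only finitely many integrable variables are involved at each stage.
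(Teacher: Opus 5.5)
Your proof is correct. Its overall architecture matches the paper's: a bound on finite grids that is uniform in the partition, then passage to a countable dense set containing $T$ via right-continuity. The difference is in the discrete step.

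The paper sets $h_i=\E[R_{t_i}-R_{t_{i+1}}\mid\F_{t_i}]$ and builds the nonnegative supermartingale majorant $Y_i=\E\bigl[\sum_{j\ge i}h_j^+ + R_{t_m}\mid\F_{t_i}\bigr]\ge R_{t_i}$. It then cites Doob's maximal inequality for nonnegative supermartingales. This is essentially a discrete Rao-type construction, i.e.\ close to the decomposition route you mention at the outset and set aside.

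You instead stop $R$ itself at the first grid exceedance time. You account for the failure of the supermartingale property through the telescoping identity with the $\F_{t_i}$-measurable indicators $\1_{\{\tau\le t_i\}}$. In effect you unroll the proof of Doob's inequality directly on $R$.

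Both arguments arrive at the same intermediate quantity, $\E R_T+\sum_i\E\bigl[(\E[R_{t_{i+1}}-R_{t_i}\mid\F_{t_i}])^-\bigr]$. So only the downward conditional drift actually enters before it is bounded by $V_T(R)$.

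Your version is shorter and fully self-contained: it needs no external maximal inequality and no conditional expectations of sums. The paper's version produces a reusable structural object, the dominating supermartingale, to which any supermartingale inequality could be applied.
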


\begin{proof}
First consider the discrete-time version. Fix a partition $0=t_0<\cdots<t_m=T$ and set $R_i=R_{t_i}$. Put
\[
  h_i:=\E[R_i-R_{i+1}\mid\F_{t_i}],
  \qquad i=0,\dots,m-1,
\]
and define
\[
  Y_i:=\E\left[\sum_{j=i}^{m-1}h_j^+ + R_m\,\middle|\,\F_{t_i}\right].
\]
Then $Y$ is a nonnegative supermartingale, because
\[
  Y_i-\E[Y_{i+1}\mid\F_{t_i}]=h_i^+\ge0.
\]
Moreover $Y_i\ge R_i$, since $h_j^+\ge h_j$ and hence
\[
Y_i\ge \E\left[\sum_{j=i}^{m-1}h_j+R_m\,\middle|\,\F_{t_i}\right]=R_i.
\]
By Doob's maximal inequality for nonnegative supermartingales,
\[
  \Pbb\left(\max_i R_i>h\right)
  \le \Pbb\left(\max_i Y_i>h\right)
  \le \frac{\E Y_0}{h}.
\]
Finally,
\[
  \E Y_0=\sum_{j=0}^{m-1}\E h_j^+ + \E R_m
  \le \sum_{j=0}^{m-1}\E |h_j|+\E R_T
  \le V_T(R)+\E R_T.
\]
To pass to continuous time, choose a countable dense set $D\subset[0,T]$ containing $T$ and exhaust it by finite partitions. The c\`adl\`ag property implies $\sup_{t\in D}R_t=\sup_{0\le t\le T}R_t$, and the discrete estimate passes to the limit by monotone convergence.
\end{proof}

\begin{lemma}[Stopped mean variation bound]\label{lem:stopped-mean-variation}
For fixed $\delta\ge2$, $\eps\in(0,1)$ and $n\ge1$, define
\[
  Q_t^{(n)}:=u_{\delta,\eps}(Y_{t\wedge\tau_n}),
  \qquad 0\le t\le T.
\]
Then $Q^{(n)}$ is a nonnegative quasi-martingale, and
\begin{align}
V_T(Q^{(n)})+\E Q_T^{(n)}
\le C\Bigg[&|Y_0|^{\alpha-1}
+\eps^{\alpha-1}
+\frac{\delta}{\log\delta}\frac{B}{\eps^{2-\alpha}}
+\frac{\delta^2}{\log\delta}\frac{A^2}{\eps^{3-\alpha}} \notag\\
&+\frac{\delta}{\log\delta}\frac{S^\alpha}{\eps}
+\frac{\delta^2}{\log\delta}\eps^{\alpha-1}
+\frac{\eps^{\alpha\wt\eta-1}}{\log\delta}
\Bigg], \label{eq:stopped-mean-variation-bound}
\end{align}
where $C$ is independent of $n,\delta,\eps$.
\end{lemma}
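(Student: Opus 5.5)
Nonnegativity of $Q^{(n)}$ is immediate, since $u_{\delta,\eps}=|\cdot|^{\alpha-1}*\psi_{\delta,\eps}\ge0$. For integrability I use \eqref{eq:approx-upper} and Lemma~\ref{lem:apriori-stopped}: they give $\E Q_t^{(n)}\le \E|Y_{t\wedge\tau_n}|^{\alpha-1}+\eps^{\alpha-1}<\infty$.

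For the mean variation I start from the It\^o decomposition \eqref{eq:ito},
\[
Q^{(n)}_t=u_{\delta,\eps}(Y_0)+M^{W,n}_t+M^{Z,n}_t+I^{b,n}_t+I^{a,n}_t+I^{\sigma,n}_t .
\]
By Lemma~\ref{lem:localized-martingales}, both stochastic integrals are true martingales. Hence, for any partition $0=t_0<\dots<t_m=T$,
\[
\E\bigl[Q^{(n)}_{t_{i+1}}-Q^{(n)}_{t_i}\mid\F_{t_i}\bigr]
=\E\Bigl[\int_{t_i\wedge\tau_n}^{t_{i+1}\wedge\tau_n}G_s\dd s\Bigm|\F_{t_i}\Bigr],
\]
where $G_s=u'_{\delta,\eps}(Y_s)(b-\wt b)+\tfrac12u''_{\delta,\eps}(Y_s)(a-\wt a)^2+C_\alpha|\sigma-\wt\sigma|^\alpha\psi_{\delta,\eps}(Y_s)$. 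Conditional Jensen, followed by summation over $i$, gives
\[
V_T(Q^{(n)})\le \E\int_0^{T\wedge\tau_n}|G_s|\dd s ,
\]
with a right-hand side independent of the partition. So the whole task reduces to bounding the total variation of the three finite-variation integrands, not just their signed expectations.

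These absolute bounds are exactly what Section~\ref{sec:proof-fundamental} already establishes. The drift estimate \eqref{eq:drift-estimate} was derived from $\E\int|u'_{\delta,\eps}(Y_s)|(|\Delta^b_s|+|\Gamma^b_s|)\dd s$. The Brownian estimate \eqref{eq:brownian-estimate} was derived from $\E\int|u''_{\delta,\eps}(Y_s)|(|\Delta^a_s|^2+|\Gamma^a_s|^2)\dd s$. The stable term is nonnegative, so its variation equals its expectation, bounded in \eqref{eq:stable-estimate}. Summing, $V_T(Q^{(n)})$ is bounded by the bracket in \eqref{eq:stopped-mean-variation-bound} plus the remainder
\[
C\int_0^T(\ell_b(s)+\ell_a(s)^2)\,g_n(s)\dd s ,\qquad g_n(s)=\E|Y_{s\wedge\tau_n}|^{\alpha-1}.
\]

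The main point is controlling $g_n$ uniformly in $n$. The conclusion of the proof of Proposition~\ref{prop:fundamental-delta} provides exactly this: Gronwall gives $\sup_t g_n(t)$ bounded by $C$ times the same bracket, with $C$ independent of $n,\delta,\eps$. Since $\ell_b+\ell_a^2\in L^1(0,T)$, the remainder is absorbed into the bracket. For the terminal term, \eqref{eq:approx-upper} gives $\E Q^{(n)}_T\le g_n(T)+\eps^{\alpha-1}$, which is handled by the same Gronwall bound. Adding the two estimates yields \eqref{eq:stopped-mean-variation-bound}, and finiteness of the right-hand side shows that $Q^{(n)}$ is a quasi-martingale.

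The one technical care point is that the Gronwall bound must be taken for the stopped quantity $g_n$ before letting $n\to\infty$, so the constant does not depend on $n$.
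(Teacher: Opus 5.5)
Your proposal is correct and follows essentially the same route as the paper: you remove the true martingale parts of the stopped It\^o decomposition and bound the mean variation by $\E\int_0^{T\wedge\tau_n}|H_s|\dd s$ using the absolute-value estimates of Section~\ref{sec:proof-fundamental}, with the stable compensator nonnegative. You then absorb both the recursive term $\int_0^T(\ell_b+\ell_a^2)g_n\dd s$ and $\E Q_T^{(n)}$ through the $n$-uniform stopped Gronwall bound and \eqref{eq:approx-upper}, exactly as the paper does, with your explicit integrability check via Lemma~\ref{lem:apriori-stopped} being a detail the paper leaves implicit.
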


\begin{proof}
From the stopped It\^o decomposition \eqref{eq:ito}, write
\[
  Q_t^{(n)}=Q_0+M_t^{(n)}+J_t^{(n)},
\]
where $M^{(n)}$ is a true martingale by Lemma~\ref{lem:localized-martingales}, and
\[
  J_t^{(n)}=\int_0^t H_s\1_{\{s\le\tau_n\}}\dd s
\]
contains the drift, Brownian correction and stable compensator densities. More explicitly,
\[
H_s=H_s^b+H_s^a+H_s^\sigma,
\]
where
\[
H_s^b=u'_{\delta,\eps}(Y_s)\bigl(b(s,X_s)-\wt b(s,\wt X_s)\bigr),
\]
\[
H_s^a=\frac12 u''_{\delta,\eps}(Y_s)\bigl(a(s,X_s)-\wt a(s,\wt X_s)\bigr)^2,
\]
and
\[
H_s^\sigma=C_\alpha|\sigma(s,X_s)-\wt\sigma(s,\wt X_s)|^\alpha
\psi_{\delta,\eps}(Y_s).
\]
For any partition,
\begin{align*}
\sum_i\E\left|\E\left[Q_{t_{i+1}}^{(n)}-Q_{t_i}^{(n)}\mid\F_{t_i}\right]\right|
&\le \sum_i\E\int_{t_i}^{t_{i+1}}|H_s|\1_{\{s\le\tau_n\}}\dd s\\
&=\E\int_0^{T\wedge\tau_n}|H_s|\dd s.
\end{align*}
The estimates of Section~\ref{sec:proof-fundamental} give separate bounds for the integrals of $|H_s^b|$, $|H_s^a|$, and $|H_s^\sigma|$. In particular,
\begin{align*}
V_T(Q^{(n)})
\le C\Bigg[&
\frac{\delta}{\log\delta}\frac{B}{\eps^{2-\alpha}}
+\frac{\delta^2}{\log\delta}\frac{A^2}{\eps^{3-\alpha}}
+\frac{\delta}{\log\delta}\frac{S^\alpha}{\eps}
+\frac{\delta^2}{\log\delta}\eps^{\alpha-1}
+\frac{\eps^{\alpha\wt\eta-1}}{\log\delta}
\Bigg]\\
&+C\int_0^T(\ell_b(s)+\ell_a(s)^2)\E|Y_{s\wedge\tau_n}|^{\alpha-1}\dd s.
\end{align*}
The stopped Gronwall estimate proved in Proposition~\ref{prop:fundamental-delta} controls the last integral uniformly in $n$. Moreover Lemma~\ref{lem:approx} and the same stopped estimate imply
\[
  \E Q_T^{(n)}\le C\left(\E|Y_{T\wedge\tau_n}|^{\alpha-1}+\eps^{\alpha-1}\right),
\]
which is bounded by the right-hand side of \eqref{eq:stopped-mean-variation-bound}. This proves the lemma.
\end{proof}

\begin{theorem}[Uniform convergence in probability]\label{thm:probability}
Assume Assumptions~\ref{ass:baseline}--\ref{ass:solutions} and suppose $B,A,S<1$. Then there exists a constant $C>0$, independent of $h$, such that, for every $h>0$, if $\wt\eta\in(1/\alpha,1]$, then
\[
\Pbb\left(\sup_{0\le t\le T}|X_t-\wt X_t|^{\alpha-1}>h\right)
\le \frac{C}{h}\left[|x_0-\wt x_0|^{\alpha-1}
+\max\left\{
B^{\frac{\alpha\wt\eta-1}{\alpha\wt\eta-\alpha+1}},
A^{\frac{2(\alpha\wt\eta-1)}{\alpha\wt\eta-\alpha+2}},
S^{\alpha-1/\wt\eta}
\right\}\right].
\]
If $\wt\eta=1/\alpha$ and $\Theta=\max\{B,A,S\}\in(0,1)$, then
\[
\Pbb\left(\sup_{0\le t\le T}|X_t-\wt X_t|^{\alpha-1}>h\right)
\le \frac{C}{h}\left[|x_0-\wt x_0|^{\alpha-1}
+\left(\log\frac1\Theta\right)^{-1}\right].
\]
When $\Theta=0$, the right-hand side is interpreted as $C|x_0-\wt x_0|^{\alpha-1}/h$.
\end{theorem}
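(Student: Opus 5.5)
We plan to derive Theorem~\ref{thm:probability} from the stopped quasi-martingale machinery, using the same parameter choices as in the proofs of Theorems~\ref{thm:noncritical} and~\ref{thm:critical}. Fix $\delta\ge2$, $\eps\in(0,1)$ and $n\ge1$, and set $Q^{(n)}_t=u_{\delta,\eps}(Y_{t\wedge\tau_n})$. By Lemma~\ref{lem:approx},
\[
|Y_{t\wedge\tau_n}|^{\alpha-1}\le Q^{(n)}_t+\eps^{\alpha-1}
\]
for all $t$ simultaneously, since the bound is pointwise in $x$. Hence for any $h>0$,
\[
\Pbb\Bigl(\sup_{0\le t\le T}|Y_{t\wedge\tau_n}|^{\alpha-1}>h\Bigr)
\le \Pbb\Bigl(\sup_{0\le t\le T}Q^{(n)}_t>h-\eps^{\alpha-1}\Bigr).
\]
To avoid the subtraction, we will instead use $\sup_t|Y_{t\wedge\tau_n}|^{\alpha-1}\le\sup_tQ^{(n)}_t+\eps^{\alpha-1}$ and split $h=h/2+h/2$. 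For $\eps^{\alpha-1}\ge h/2$ the bound is trivial because the right-hand side of the claimed estimate is already of order $\eps^{\alpha-1}/h\ge1/2$. Otherwise Lemma~\ref{lem:qm-max} together with Lemma~\ref{lem:stopped-mean-variation} gives
\[
\Pbb\Bigl(\sup_t|Y_{t\wedge\tau_n}|^{\alpha-1}>h\Bigr)\le \frac{2C}{h}\,\mathcal R(\delta,\eps),
\]
where $\mathcal R(\delta,\eps)$ is the bracket in \eqref{eq:stopped-mean-variation-bound}, with constant independent of $n$. A simpler alternative is to bound $\Pbb(\sup>h)\le\Pbb(\sup_tQ^{(n)}_t>h/2)+\1_{\{\eps^{\alpha-1}>h/2\}}$ and note $\1_{\{\eps^{\alpha-1}>h/2\}}\le 2\eps^{\alpha-1}/h$. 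Either way the $\eps^{\alpha-1}$ term is absorbed into $\mathcal R/h$.

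Next we remove the localization. Since $\tau_n=T$ for all large $n$ almost surely, the events $\{\sup_t|Y_{t\wedge\tau_n}|^{\alpha-1}>h\}$ converge to $\{\sup_t|Y_t|^{\alpha-1}>h\}$ up to a null set; more precisely, their indicators converge almost surely. Fatou's lemma, or simply $\Pbb(E)\le\Pbb(E_n)+\Pbb(\tau_n<T)$ with $\Pbb(\tau_n<T)\to0$, then transfers the bound to the unstopped process. This yields, for all $\delta\ge2$ and $\eps\in(0,1)$,
\[
\Pbb\Bigl(\sup_{0\le t\le T}|X_t-\wt X_t|^{\alpha-1}>h\Bigr)\le \frac{C}{h}\,\mathcal R(\delta,\eps),
\]
a tail analogue of Proposition~\ref{prop:fundamental-delta}.

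Finally we optimize exactly as before. In the non-critical case we take $\delta=2$ and $\eps=\max\{\eps_B,\eps_A,\eps_S\}$, using the same exponents as in the proof of Theorem~\ref{thm:noncritical}; this gives the power rate. If $\max\{B,A,S\}=0$ we let $\eps\downarrow0$ instead. In the critical case we take $\eps=\Theta$ and $\delta=\max\{2,\Theta^{-r}\}$ with $r=(\alpha-1)/4$, bounding each term by $C(\log(1/\Theta))^{-1}$ as in the proof of Theorem~\ref{thm:critical}. For $\Theta=0$ we take $\delta=\max\{2,\eps^{-r}\}$ and let $\eps\downarrow0$, which gives the limiting convention.

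The main point needing care is the first step: Lemma~\ref{lem:stopped-mean-variation} relies on the stopped Gronwall bound being uniform in $n$, which the lemma already asserts. Beyond that, the only subtlety is handling the additive $\eps^{\alpha-1}$ in the comparison between $|Y|^{\alpha-1}$ and $u_{\delta,\eps}(Y)$ uniformly in time. The indicator trick above resolves this without any further analysis.
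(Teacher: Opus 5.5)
Your proposal is correct and follows the paper's proof essentially step for step. Both arguments combine the stopped quasi-martingale maximal inequality with the mean-variation bound of Lemma~\ref{lem:stopped-mean-variation}, remove the stopping using $\tau_n=T$ eventually, and then optimize $(\delta,\eps)$ exactly as in Theorems~\ref{thm:noncritical} and~\ref{thm:critical}. The only difference is cosmetic: the paper absorbs the additive $\eps^{\alpha-1}$ by applying Lemma~\ref{lem:qm-max} directly to $R^{(n)}=Q^{(n)}+\eps^{\alpha-1}$, since a constant shift leaves the mean variation unchanged. This avoids your $h/2$ splitting and the resulting factor $2$.
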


\begin{proof}
In the non-critical case, if $\max\{B,A,S\}=0$, the estimate follows by taking $\delta=2$ and then $\eps\downarrow0$ in the stopped bound below. In the critical case with $\Theta=0$, the same stopped bound is used with $\delta=\max\{2,\eps^{-(\alpha-1)/4}\}$ and then $\eps\downarrow0$, as in the proof of Theorem~\ref{thm:critical}. Thus we may assume the optimizing parameter is positive. Fix $\delta\ge2$, $\eps\in(0,1)$ and $n\ge1$. By Lemma~\ref{lem:approx},
\[
  |Y_{t\wedge\tau_n}|^{\alpha-1}\le u_{\delta,\eps}(Y_{t\wedge\tau_n})+\eps^{\alpha-1}=Q_t^{(n)}+\eps^{\alpha-1}.
\]
Set $R_t^{(n)}:=Q_t^{(n)}+\eps^{\alpha-1}$. Then $R^{(n)}$ is nonnegative and
\[
  V_T(R^{(n)})=V_T(Q^{(n)}),
  \qquad
  \E R_T^{(n)}\le \E Q_T^{(n)}+\eps^{\alpha-1}.
\]
Lemma~\ref{lem:qm-max} and Lemma~\ref{lem:stopped-mean-variation} give
\begin{align*}
\Pbb\left(\sup_{0\le t\le T}|Y_{t\wedge\tau_n}|^{\alpha-1}>h\right)
&\le \Pbb\left(\sup_{0\le t\le T}R_t^{(n)}>h\right)\\
&\le \frac{C}{h}\Bigg[|Y_0|^{\alpha-1}
+\eps^{\alpha-1}
+\frac{\delta}{\log\delta}\frac{B}{\eps^{2-\alpha}}
+\frac{\delta^2}{\log\delta}\frac{A^2}{\eps^{3-\alpha}}\\
&\hspace{3.4cm}
+\frac{\delta}{\log\delta}\frac{S^\alpha}{\eps}
+\frac{\delta^2}{\log\delta}\eps^{\alpha-1}
+\frac{\eps^{\alpha\wt\eta-1}}{\log\delta}
\Bigg].
\end{align*}
Since $\tau_n=T$ eventually almost surely,
\[
  \sup_{0\le t\le T}|Y_{t\wedge\tau_n}|
  =\sup_{0\le t\le \tau_n}|Y_t|
  \uparrow \sup_{0\le t\le T}|Y_t|
  \qquad\text{a.s.}
\]
Thus the events on the left increase to the event with the unstopped supremum, and monotone convergence of probabilities removes the stopping. The choices of $(\delta,\eps)$ used in the proofs of Theorems~\ref{thm:noncritical} and~\ref{thm:critical} give the asserted rates.
\end{proof}

\section{Stability with predictable forcing errors}
\label{sec:forcing}

The proof of Proposition~\ref{prop:fundamental-delta} separates direct coefficient errors from path-difference errors. Therefore the same argument also treats predictable perturbations that are not functions of the current state. This formulation applies to error terms produced, for example, by a numerical scheme or by an exogenous perturbation.

Let $\beta$, $\varrho$ and $\zeta$ be predictable real-valued processes and suppose that the perturbed equation is replaced by
\begin{align}
\wt X_t
&=\wt x_0+\int_0^t\{\wt b(s,\wt X_s)+\beta_s\}\dd s
+\int_0^t\{\wt a(s,\wt X_s)+\varrho_s\}\dd W_s \notag\\
&\qquad +\int_0^t\{\wt\sigma(s,\wt X_{s-})+\zeta_s\}\dd Z_s.
\label{eq:forced-perturbed}
\end{align}
Define
\[
  \mathcal B:=\int_0^T\E|\beta_s|\dd s,
  \qquad
  \mathcal A:=\left(\int_0^T\E|\varrho_s|^2\dd s\right)^{1/2},
  \qquad
  \mathcal S:=\left(\int_0^T\E|\zeta_s|^\alpha\dd s\right)^{1/\alpha}.
\]
Set
\[
  \overline B:=B+\mathcal B,
  \qquad
  \overline A:=\bigl(A^2+\mathcal A^2\bigr)^{1/2},
  \qquad
  \overline S:=\bigl(S^\alpha+\mathcal S^\alpha\bigr)^{1/\alpha}.
\]

\begin{proposition}[Predictable forcing errors]\label{prop:forcing}
Assume Assumptions~\ref{ass:baseline}--\ref{ass:solutions}, with \eqref{eq:Xtilde} replaced by \eqref{eq:forced-perturbed}. Assume that $\mathcal B,\mathcal A,\mathcal S<\infty$ and that the stochastic integrals in \eqref{eq:forced-perturbed} are well defined. Then Proposition~\ref{prop:fundamental-delta} remains valid with $B,A,S$ replaced by $\overline B,\overline A,\overline S$. Consequently, the optimized estimates of Theorems~\ref{thm:noncritical},~\ref{thm:critical}, and~\ref{thm:probability} remain valid with the same replacement whenever their smallness assumptions are satisfied; that is, $\overline B,\overline A,\overline S<1$ in the non-critical case, and $\overline\Theta:=\max\{\overline B,\overline A,\overline S\}\in(0,1)$ in the critical case, with the same limiting convention when $\overline\Theta=0$.
\end{proposition}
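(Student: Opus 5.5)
The plan is to rerun the proof of Proposition~\ref{prop:fundamental-delta} for the forced equation \eqref{eq:forced-perturbed}, changing only the splitting of the coefficient differences. The three differences now read $b(s,X_s)-\wt b(s,\wt X_s)-\beta_s$, and similarly for $a$ with $\varrho_s$ and for $\sigma$ with $\zeta_s$. We therefore enlarge the direct errors to
\[
\bar\Delta_s^b:=\Delta_s^b-\beta_s,\qquad
\bar\Delta_s^a:=\Delta_s^a-\varrho_s,\qquad
\bar\Delta_s^\sigma:=\Delta_s^\sigma-\zeta_s,
\]
and keep the path-difference parts $\Gamma_s^b,\Gamma_s^a,\Gamma_s^\sigma$ unchanged. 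Those parts depend only on $\wt b,\wt a,\wt\sigma$ and $Y$, so Assumption~\ref{ass:perturbed} bounds them exactly as before.

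The integrability bookkeeping comes first. The stopping times $\tau_n$ are defined as before. The a priori bound of Lemma~\ref{lem:apriori-stopped} carries over: on $[0,\tau_n]$ the forcing adds $\E\int|\beta_s|\dd s=\mathcal B$ to the drift, $\mathcal A^2$ to the Itô isometry bound, and $\mathcal S^\alpha$ to the input of Lemma~\ref{lem:stable-max}. All three are finite by hypothesis, and $\zeta$ is predictable, so the lemma applies. Lemma~\ref{lem:localized-martingales} carries over in the same way, since $|\gamma_s|^\alpha\le C(|\Delta^\sigma_s|^\alpha+|\zeta_s|^\alpha+(2n)^\alpha\ell_\sigma(s)^\alpha)$ is integrable in expectation. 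Itô's formula \eqref{eq:ito} and the generator identity of Lemma~\ref{lem:stable-generator} are pathwise and involve only the total jump coefficient $\gamma_s$, so they hold verbatim with the new coefficients.

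Next come the three direct-error estimates. In the drift term, $\E\int|u'_{\delta,\eps}(Y_s)||\bar\Delta^b_s|\dd s\le C\frac{\delta}{\eps^{2-\alpha}\log\delta}(B+\mathcal B)$, which follows from \eqref{eq:global-derivative-bounds} and $|\bar\Delta^b_s|\le|\Delta^b_s|+|\beta_s|$. In the Brownian term, $|\bar\Delta^a_s|^2\le 2|\Delta^a_s|^2+2|\varrho_s|^2$ gives the bound $C\frac{\delta^2}{\eps^{3-\alpha}\log\delta}(A^2+\mathcal A^2)=C\frac{\delta^2}{\eps^{3-\alpha}\log\delta}\overline A^2$. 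In the stable term, $|\bar\Delta^\sigma_s|^\alpha\le 2^{\alpha-1}(|\Delta^\sigma_s|^\alpha+|\zeta_s|^\alpha)$ together with \eqref{eq:psi0-bound} gives $C\frac{\delta}{\eps\log\delta}\overline S^\alpha$.

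The only point needing care is that the paper's $B,A,S$ are evaluated along the law of $X$. That is harmless here, because the forcing terms are already expectations of predictable processes and require no law-weighting. The splitting into three pieces (direct error, forcing, path difference) costs only numerical constants, which the constants $2$ and $2^{\alpha-1}$ absorb. The $\Gamma$-terms produce the same Gronwall kernel $\ell_b+\ell_a^2$ and the same $\eps^{\alpha\wt\eta-1}/\log\delta$ term. Gronwall's inequality, Fatou's lemma, and removal of the stopping then give \eqref{eq:fundamental-delta} with $\overline B,\overline A,\overline S$ in place of $B,A,S$.

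The optimized statements need no further work. The proofs of Theorems~\ref{thm:noncritical} and~\ref{thm:critical} use only \eqref{eq:fundamental-delta} and the smallness of the three scales, so they transfer directly under the hypotheses $\overline B,\overline A,\overline S<1$, respectively $\overline\Theta\in(0,1)$, with the same limiting convention at $\overline\Theta=0$. For Theorem~\ref{thm:probability}, Lemma~\ref{lem:stopped-mean-variation} bounds $V_T$ by $\E\int|H_s|\dd s$. The densities $H_s$ are estimated exactly as above, so the quasi-martingale argument goes through unchanged.
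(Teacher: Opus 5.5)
Your proposal is correct and matches the paper's proof essentially step for step: only the three direct-error estimates change, and the path-difference terms, the localization, martingale and Gronwall steps, and the subsequent optimizations carry over unchanged. The paper splits each difference into three pieces using $|x+y+z|^2\le 3(|x|^2+|y|^2+|z|^2)$ and $|x+y+z|^\alpha\le 3^{\alpha-1}(|x|^\alpha+|y|^\alpha+|z|^\alpha)$, whereas you split twice, which only changes numerical constants; your explicit check that Lemmas~\ref{lem:apriori-stopped} and~\ref{lem:localized-martingales} survive the forcing (using $\mathcal B,\mathcal A,\mathcal S<\infty$ and predictability of $\zeta$) spells out what the paper summarizes as ``identical.''
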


\begin{proof}
Only the three direct-difference estimates in the proof of Proposition~\ref{prop:fundamental-delta} change. The drift difference is
\[
  b(s,X_s)-\wt b(s,\wt X_s)-\beta_s
  =\{b(s,X_s)-\wt b(s,X_s)\}
   +\{\wt b(s,X_s)-\wt b(s,\wt X_s)\}-\beta_s.
\]
The first and third terms are controlled by $B+\mathcal B$ using the bound for $\|u'_{\delta,\eps}\|_\infty$, while the middle term is unchanged and gives the recursive term involving $\ell_b(s)\E|Y_{s\wedge\tau_n}|^{\alpha-1}$.

For the Brownian correction,
\[
  a(s,X_s)-\wt a(s,\wt X_s)-\varrho_s
  =\Delta_s^a+\Gamma_s^a-\varrho_s,
\]
where $\Delta_s^a=a(s,X_s)-\wt a(s,X_s)$ and $\Gamma_s^a=\wt a(s,X_s)-\wt a(s,\wt X_s)$. The inequality $|x+y+z|^2\le3(|x|^2+|y|^2+|z|^2)$ shows that the direct part is bounded by $A^2+\mathcal A^2$, up to a numerical constant, while the path-difference part is unchanged. Thus the Brownian term has the same form with $A$ replaced by $\overline A$.

Finally, for the stable compensator,
\[
  \sigma(s,X_s)-\wt\sigma(s,\wt X_s)-\zeta_s
  =\Delta_s^\sigma+\Gamma_s^\sigma-\zeta_s.
\]
Using $|x+y+z|^\alpha\le 3^{\alpha-1}(|x|^\alpha+|y|^\alpha+|z|^\alpha)$, the direct part is bounded by $S^\alpha+\mathcal S^\alpha$ and the H\"older path-difference part is unchanged. All remaining localization, martingale, and Gronwall steps are identical to those in Sections~\ref{sec:localization} and~\ref{sec:proof-fundamental}. The optimized expectation and probability estimates then follow by applying Theorems~\ref{thm:noncritical},~\ref{thm:critical}, and~\ref{thm:probability} to the barred quantities under the smallness hypotheses stated in those theorems.
\end{proof}

\section{Further consequences}
\label{sec:consequences}

\paragraph{Uniform coefficient errors.}
The law-weighted distances can be controlled by spatial supremum norms on two useful time scales.
Define the time-integrated quantities
\[
B_{1,\infty}
:=\|b-\wt b\|_{L^1(0,T;L^\infty(\R))},
\qquad
A_{2,\infty}
:=\|a-\wt a\|_{L^2(0,T;L^\infty(\R))},
\]
and
\[
S_{\alpha,\infty}
:=\|\sigma-\wt\sigma\|_{L^\alpha(0,T;L^\infty(\R))}.
\]
Also define the time-uniform quantities
\[
\hat B_\infty
:=\|b-\wt b\|_{L^\infty(0,T;L^\infty(\R))},
\qquad
\hat A_\infty
:=\|a-\wt a\|_{L^\infty(0,T;L^\infty(\R))},
\]
and
\[
\hat S_\infty
:=\|\sigma-\wt\sigma\|_{L^\infty(0,T;L^\infty(\R))},
\]
where the $L^\infty(0,T)$ norm is the essential supremum in time.
Then
\[
B\le B_{1,\infty},\qquad
A\le A_{2,\infty},\qquad
S\le S_{\alpha,\infty},
\]
while
\[
B\le T\hat B_\infty,\qquad
A\le T^{1/2}\hat A_\infty,\qquad
S\le T^{1/\alpha}\hat S_\infty.
\]
For each coefficient class, the time scale may be selected independently.
Accordingly, the selected triple of error norms may be any element of the Cartesian product
\[
\{B_{1,\infty},\hat B_\infty\}
\times
\{A_{2,\infty},\hat A_\infty\}
\times
\{S_{\alpha,\infty},\hat S_\infty\}.
\]
This product consists of exactly $2^3=8$ admissible triples; in particular, mixed choices such as $(\hat B_\infty,A_{2,\infty},\hat S_\infty)$ are allowed, and the same time norm need not be used for all three coefficient errors.
By repeating the parameter optimizations in Theorems~\ref{thm:noncritical} and~\ref{thm:critical} and the stopped quasi-martingale argument in Theorem~\ref{thm:probability}, we obtain the same rate exponents with $B,A,S$ replaced by the corresponding components of any selected triple of error norms, whenever those components are less than one.
Whenever a time-uniform quantity is selected, the corresponding factor $T$, $T^{1/2}$, or $T^{1/\alpha}$ is absorbed into the constant.

Neither choice dominates the other in any coefficient class.
More precisely, when $T>1$, for $p\in\{1,2,\alpha\}$ a persistent discrepancy $d(t)\equiv c$ with $T^{-1/p}<c<1$ satisfies $\|d\|_{L^\infty(0,T)}<1<\|d\|_{L^p(0,T)}$, whereas a short spike $d(t)=M\1_{[0,\varepsilon]}(t)$ with $M>1$ and $0<\varepsilon<\min\{T,M^{-p}\}$ satisfies $\|d\|_{L^p(0,T)}<1<\|d\|_{L^\infty(0,T)}$.
This comparison applies independently with $p=1$ for the drift, $p=2$ for the Brownian diffusion, and $p=\alpha$ for the stable jump coefficient.
Thus integrated and time-uniform norms may be combined according to the temporal profile of each individual coefficient perturbation.

\subsection{Time-homogeneous approximation}
\label{sec:approximation}

The next corollary is a direct specialization of Theorems~\ref{thm:noncritical} and~\ref{thm:critical} to time-homogeneous coefficient sequences driven by the same noises. It assumes the existence of the coupled solutions and convergence of the corresponding law-weighted coefficient distances.

\begin{corollary}[Coefficient approximation]
\label{cor:approx}
Let $X$ solve the baseline equation \eqref{eq:X}. For each $n\ge1$, let $X^{(n)}$ be a c\`adl\`ag adapted solution, driven by the same pair $(W,Z)$, such that the pair $(X,X^{(n)})$ satisfies Assumption~\ref{ass:solutions}. Suppose that $X^{(n)}$ solves
\[
X_t^{(n)}=x_0^{(n)}+\int_0^t b_n(X_s^{(n)})\dd s
+\int_0^t a_n(X_s^{(n)})\dd W_s
+\int_0^t \sigma_n(X_{s-}^{(n)})\dd Z_s.
\]
Assume that there exist common constants $L_b,L_a,L_\sigma$ and exponents $\eta_n\in[\wt\eta,1]$, with $\wt\eta\in[1/\alpha,1]$, such that for all $n$ and all $x,y\in\R$,
\[
|b_n(x)-b_n(y)|\le L_b|x-y|,
\qquad
|a_n(x)-a_n(y)|\le L_a|x-y|,
\]
and
\[
|\sigma_n(x)-\sigma_n(y)|\le L_\sigma\bigl(|x-y|^{\eta_n}\vee |x-y|\bigr).
\]
Then the family satisfies Assumption~\ref{ass:perturbed} uniformly with exponent $\wt\eta$. Define
\begin{align*}
B_n&:=\int_0^T\E |b(s,X_s)-b_n(X_s)|\dd s,\\
A_n&:=\left(\int_0^T\E |a(s,X_s)-a_n(X_s)|^2\dd s\right)^{1/2},\\
S_n&:=\left(\int_0^T\E |\sigma(s,X_s)-\sigma_n(X_s)|^\alpha\dd s\right)^{1/\alpha}.
\end{align*}
Suppose that $B_n,A_n,S_n\to0$ and $x_0^{(n)}\to x_0$. Then
\[
\sup_{0\le t\le T}\E|X_t-X_t^{(n)}|^{\alpha-1}\to0.
\]
Moreover, once $B_n,A_n,S_n<1$, the rate is obtained from Theorem~\ref{thm:noncritical} if $\wt\eta>1/\alpha$ and from Theorem~\ref{thm:critical} if $\wt\eta=1/\alpha$.
\end{corollary}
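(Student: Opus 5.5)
Corollary~\ref{cor:approx} will follow by regarding each $X^{(n)}$ as the perturbed solution $\wt X$, with time-homogeneous coefficients $\wt b=b_n$, $\wt a=a_n$, $\wt\sigma=\sigma_n$. The first step is to verify Assumption~\ref{ass:perturbed} uniformly in $n$ with the \emph{common} exponent $\wt\eta$. The Lipschitz conditions hold with constant moduli $\ell_b\equiv L_b$, $\ell_a\equiv L_a$, which lie in $L^1(0,T)$ and $L^2(0,T)$ with norms $L_bT$ and $L_aT^{1/2}$. For the stable coefficient, use the elementary inequality $r^{\eta_n}\vee r\le r^{\wt\eta}\vee r$ for $r\ge0$ when $\eta_n\ge\wt\eta$:
\begin{itemize}
\item if $r\le1$, then $r^{\eta_n}\le r^{\wt\eta}$;
\item if $r>1$, then $r^{\eta_n}\le r$.
\end{itemize}
Hence $\ell_\sigma\equiv L_\sigma$ works with exponent $\wt\eta$, and $\|\ell_\sigma\|_{L^\alpha(0,T)}=L_\sigma T^{1/\alpha}$. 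Together with the hypothesis that each pair $(X,X^{(n)})$ satisfies Assumption~\ref{ass:solutions}, and Assumption~\ref{ass:baseline} for $X$, all hypotheses of Proposition~\ref{prop:fundamental-delta} hold with $B,A,S$ replaced by $B_n,A_n,S_n$.

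The key point is that the constant $C$ in Proposition~\ref{prop:fundamental-delta} depends only on $T,\alpha,\wt\eta$ and the norms of the moduli. These are the same for all $n$, so a single constant $C$ works uniformly in $n$. Since $B_n,A_n,S_n\to0$, we have $B_n,A_n,S_n<1$ for all large $n$. For those $n$, apply Theorem~\ref{thm:noncritical} if $\wt\eta>1/\alpha$. Its right-hand side is $C[|x_0-x_0^{(n)}|^{\alpha-1}+\max\{B_n^{\cdot},A_n^{\cdot},S_n^{\cdot}\}]$ with strictly positive exponents $\frac{\alpha\wt\eta-1}{\alpha\wt\eta-\alpha+1}$, $\frac{2(\alpha\wt\eta-1)}{\alpha\wt\eta-\alpha+2}$ and $\alpha-1/\wt\eta$. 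The denominators $\alpha\wt\eta-\alpha+1$ and $\alpha\wt\eta-\alpha+2$ are positive because $\wt\eta>1/\alpha$ implies $\alpha\wt\eta>1>\alpha-1$. This right-hand side tends to $0$.

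If instead $\wt\eta=1/\alpha$, apply Theorem~\ref{thm:critical} with $\Theta_n=\max\{B_n,A_n,S_n\}$. If $\Theta_n>0$, the bound is $C[|x_0-x_0^{(n)}|^{\alpha-1}+(\log(1/\Theta_n))^{-1}]$, which tends to $0$. If $\Theta_n=0$, the limiting convention gives $C|x_0-x_0^{(n)}|^{\alpha-1}$. In both cases the supremum tends to $0$, and the rate statement is exactly the application of these theorems.

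The only step needing care is the uniformity of the constant in $n$. I will check this by tracing the dependence in Proposition~\ref{prop:fundamental-delta} and the optimization in the two theorems. The choices $\delta=2$, $\eps=\max\{\eps_B,\eps_A,\eps_S\}$, respectively $\eps=\Theta$ and $\delta=\max\{2,\Theta^{-r}\}$, introduce only constants depending on $\alpha$ and $\wt\eta$. The constant $K$ of Assumption~\ref{ass:baseline} concerns only the fixed baseline, so it does not depend on $n$ either. Thus no hidden $n$-dependence (e.g. through $\eta_n$) enters, since only the common exponent $\wt\eta$ is used.
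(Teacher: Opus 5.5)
Your proposal is correct and follows the paper's proof: you apply Theorem~\ref{thm:noncritical} or Theorem~\ref{thm:critical} to each pair $(X,X^{(n)})$ with $(\wt b,\wt a,\wt\sigma)=(b_n,a_n,\sigma_n)$ and constant moduli $L_b,L_a,L_\sigma$, then let $n\to\infty$. Your check that $r^{\eta_n}\vee r\le r^{\wt\eta}\vee r$ and your observation that the constants depend on $n$ only through the fixed norms $L_bT$, $L_aT^{1/2}$, $L_\sigma T^{1/\alpha}$ supply details that the paper's proof leaves implicit.
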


\begin{proof}
Apply Theorem~\ref{thm:noncritical} or Theorem~\ref{thm:critical} to the pair $(X,X^{(n)})$, with perturbed coefficients $(b_n,a_n,\sigma_n)$. The uniform modulus assumptions ensure that Assumption~\ref{ass:perturbed} holds with exponent $\wt\eta$, and the quantities $B_n,A_n,S_n$ are exactly the corresponding law-weighted coefficient distances. The convergence of the initial values and of $B_n,A_n,S_n$ gives the expectation convergence; the stated rates are the same optimized rates with $B,A,S$ replaced by $B_n,A_n,S_n$.
\end{proof}

\begin{remark}[Discretization errors as predictable forcing]
For Euler--Maruyama schemes, let $X^h$ denote the approximation with time step $h$, and set $\pi_h(s):=h\lfloor s/h\rfloor$. The frozen arguments $X_{\pi_h(s)}^{h}$ produce predictable forcing errors. Proposition~\ref{prop:forcing} gives the corresponding stability estimate once the discretization errors
\[
  b(s,X_s^h)-b(s,X_{\pi_h(s)}^h),\quad
  a(s,X_s^h)-a(s,X_{\pi_h(s)}^h),\quad
  \sigma(s,X_s^h)-\sigma(s,X_{\pi_h(s)}^h)
\]
are estimated in the corresponding $L^1$, $L^2$, and $L^\alpha$ scales.
\end{remark}

\section{Concluding remarks}
\label{sec:concluding}

\begin{remark}[Brownian $1/2$-H\"older coefficient]
For Brownian SDEs alone, the Yamada-Watanabe threshold for the diffusion coefficient is $1/2$-H\"older continuity. Under a condition of the form
\[
  |\wt a(t,x)-\wt a(t,y)|\le C|x-y|^{1/2},
\]
the Brownian path-difference term contains a factor of the order
\[
  |u_{\delta,\eps}''(Y_s)|\,|Y_s|,
\]
which behaves like $|Y_s|^{\alpha-2}$ near the origin.
This factor has a stronger singularity at the origin than the present $|Y_s|^{\alpha-1}$ comparison scale. A comparison estimate at the simultaneous Brownian $1/2$-H\"older and stable $1/\alpha$-H\"older thresholds would require a localization scheme adapted to this two-threshold structure.
\end{remark}

\begin{remark}[Density representation]
The estimates are stated in the law-weighted form \eqref{eq:B-exp}-\eqref{eq:S-exp}. The proof uses these expectations along the baseline process. When $X$ admits a transition density, the identities \eqref{eq:B-density}-\eqref{eq:S-density} express the same quantities as spatially weighted norms. In the stable-only case, such density representations follow under boundedness, positivity, and H\"older-type assumptions on the stable coefficient from the parametrix framework of Knopova and Kulik \cite{KnopovaKulik2018}. Thus density estimates give spatial-norm formulations of the same law-weighted quantities.
\end{remark}

\begin{remark}[Role of localization]
The estimates are proved by localization and do not use deterministic envelopes of the perturbed coefficients. On stopped intervals, the decompositions through $B,A,S$ and the moduli $\ell_b,\ell_a,\ell_\sigma$ give the required integrability. Consequently, the constants in the final estimates depend on $\|\ell_b\|_{L^1(0,T)}$, $\|\ell_a\|_{L^2(0,T)}$ and $\|\ell_\sigma\|_{L^\alpha(0,T)}$, with no additional global-envelope parameters.
\end{remark}

\begin{remark}[Non-Lipschitz exponents]
The consistency examples in \Cref{sec:consistency} identify the sharp scale in the Lipschitz constant-coefficient subclass. For $\wt\eta<1$, the rate exponents arise from the Komatsu localization and the mixed balancing of $B,A,S$. Propositions~\ref{prop:nonlipschitz-example},~\ref{prop:mixed-brownian-error},~\ref{prop:all-errors-example}, and~\ref{prop:mixed-tail-brownian-error} give explicit H\"older examples, including a case in which all three coefficient distances occur, cases where the estimate reduces to the Brownian diffusion error, and a case where the finiteness of that distance is verified through a stable moment condition. Lower-bound estimates in such H\"older examples require coefficient-specific local constructions.
\end{remark}

\section*{Statements and Declarations}

\paragraph{Funding.}
This work was supported by JSPS KAKENHI Grant Number 23K12507.

\paragraph{Competing interests.}
The authors declare no competing interests.

\paragraph{Data availability.}
No datasets were generated or analyzed during the current study.

\paragraph{Author contributions.}
T. Nakagawa and R. Suzuki conceived the study, developed the mathematical analysis, and wrote the manuscript. Both authors read and approved the final manuscript.

\bibliographystyle{abbrvurl}
\bibliography{references}

@book{Applebaum2009,
  author    = {Applebaum, David},
  title     = {L{\'e}vy Processes and Stochastic Calculus},
  edition   = {Second},
  publisher = {Cambridge University Press},
  address   = {Cambridge},
  year      = {2009},
  doi       = {10.1017/CBO9780511809781},
  url       = {https://doi.org/10.1017/CBO9780511809781}
}

@article{Komatsu1982,
  author  = {Komatsu, Takashi},
  title   = {On the pathwise uniqueness of solutions of one-dimensional stochastic differential equations of jump type},
  journal = {Proceedings of the Japan Academy, Series A, Mathematical Sciences},
  volume  = {58},
  number  = {8},
  pages   = {353--356},
  year    = {1982},
  doi     = {10.3792/pjaa.58.353},
  url     = {https://doi.org/10.3792/pjaa.58.353}
}

@article{KnopovaKulik2018,
  author  = {Knopova, Victoria and Kulik, Alexei},
  title   = {Parametrix construction of the transition probability density of the solution to an {SDE} driven by {$\alpha$}-stable noise},
  journal = {Annales de l'Institut Henri Poincar{\'e}, Probabilit{\'e}s et Statistiques},
  volume  = {54},
  number  = {1},
  pages   = {100--140},
  year    = {2018},
  doi     = {10.1214/16-AIHP796},
  url     = {https://doi.org/10.1214/16-AIHP796}
}

@article{Nakagawa2020,
  author  = {Nakagawa, Takuya},
  title   = {{$L^{\alpha-1}$} distance between two one-dimensional stochastic differential equations driven by a symmetric {$\alpha$}-stable process},
  journal = {Japan Journal of Industrial and Applied Mathematics},
  volume  = {37},
  number  = {3},
  pages   = {929--956},
  year    = {2020},
  doi     = {10.1007/s13160-020-00429-9},
  url     = {https://doi.org/10.1007/s13160-020-00429-9}
}

@article{Nakagawa2026,
  author  = {Nakagawa, Takuya},
  title   = {{$L^{\alpha-1}$} distance between two one-dimensional stochastic differential equations with drift terms driven by a symmetric {$\alpha$}-stable process},
  journal = {Japan Journal of Industrial and Applied Mathematics},
  volume  = {43},
  pages   = {45},
  year    = {2026},
  doi     = {10.1007/s13160-026-00800-2},
  url     = {https://doi.org/10.1007/s13160-026-00800-2}
}

@article{BassBurdzyChen2004,
  author  = {Bass, Richard F. and Burdzy, Krzysztof and Chen, Zhen-Qing},
  title   = {Stochastic differential equations driven by stable processes for which pathwise uniqueness fails},
  journal = {Stochastic Processes and their Applications},
  volume  = {111},
  number  = {1},
  pages   = {1--15},
  year    = {2004},
  doi     = {10.1016/j.spa.2004.01.010},
  url     = {https://doi.org/10.1016/j.spa.2004.01.010}
}

@article{ChenKimSong2010,
  author  = {Chen, Zhen-Qing and Kim, Panki and Song, Renming},
  title   = {Heat kernel estimates for {$\Delta+\Delta^{\alpha/2}$} in {$C^{1,1}$} open sets},
  journal = {Journal of the London Mathematical Society},
  volume  = {84},
  number  = {1},
  pages   = {58--80},
  year    = {2011},
  doi     = {10.1112/jlms/jdq102},
  url     = {https://doi.org/10.1112/jlms/jdq102}
}

@article{ChenHu2015,
  author  = {Chen, Zhen-Qing and Hu, Eryan},
  title   = {Heat kernel estimates for {$\Delta+\Delta^{\alpha/2}$} under gradient perturbation},
  journal = {Stochastic Processes and their Applications},
  volume  = {125},
  number  = {7},
  pages   = {2603--2642},
  year    = {2015},
  doi     = {10.1016/j.spa.2015.02.016},
  url     = {https://doi.org/10.1016/j.spa.2015.02.016}
}

@article{YamadaWatanabe1971,
  author  = {Yamada, Toshio and Watanabe, Shinzo},
  title   = {On the uniqueness of solutions of stochastic differential equations},
  journal = {Journal of Mathematics of Kyoto University},
  volume  = {11},
  number  = {1},
  pages   = {155--167},
  year    = {1971},
  doi     = {10.1215/kjm/1250523691},
  url     = {https://doi.org/10.1215/kjm/1250523691}
}

@article{Kurtz1991,
  author  = {Kurtz, Thomas G.},
  title   = {Random time changes and convergence in distribution under the Meyer-Zheng conditions},
  journal = {Annals of Probability},
  volume  = {19},
  number  = {3},
  pages   = {1010--1034},
  year    = {1991},
  doi     = {10.1214/aop/1176990422},
  url     = {https://doi.org/10.1214/aop/1176990422}
}

@article{Fournier2013,
  author  = {Fournier, Nicolas},
  title   = {On pathwise uniqueness for stochastic differential equations driven by stable {L{\'e}vy} processes},
  journal = {Annales de l'IHP Probabilit{\'e}s et Statistiques},
  volume  = {49},
  number  = {1},
  pages   = {138--159},
  year    = {2013},
  doi     = {10.1214/11-AIHP420},
  url     = {https://doi.org/10.1214/11-AIHP420}
}

@book{IkedaWatanabe1989,
  author    = {Ikeda, Nobuyuki and Watanabe, Shinzo},
  title     = {Stochastic Differential Equations and Diffusion Processes},
  edition   = {Second},
  publisher = {North-Holland},
  address   = {Amsterdam},
  year      = {1989}
}

@book{GelfandShilov1964,
  author    = {Gel'fand, Izrail M. and Shilov, Georgii E.},
  title     = {Generalized Functions, Vol. 1: Properties and Operations},
  publisher = {Academic Press},
  address   = {New York},
  year      = {1964}
}

@book{Landkof1972,
  author    = {Landkof, N. S.},
  title     = {Foundations of Modern Potential Theory},
  publisher = {Springer},
  address   = {Berlin},
  year      = {1972},
  doi       = {10.1007/978-3-642-65183-0},
  url       = {https://doi.org/10.1007/978-3-642-65183-0}
}

@article{Kwasnicki2017,
  author  = {Kwa\'snicki, Mateusz},
  title   = {Ten equivalent definitions of the fractional {L}aplace operator},
  journal = {Fractional Calculus and Applied Analysis},
  volume  = {20},
  number  = {1},
  pages   = {7--51},
  year    = {2017},
  doi     = {10.1515/fca-2017-0002},
  url     = {https://doi.org/10.1515/fca-2017-0002}
}

@incollection{Hashimoto2013,
  author    = {Hashimoto, Hiroya},
  title     = {Approximation and stability of solutions of {SDEs} driven by a symmetric {$\alpha$}-stable process with non-{L}ipschitz coefficients},
  booktitle = {S\'eminaire de Probabilit\'es XLV},
  series    = {Lecture Notes in Mathematics},
  volume    = {2078},
  pages     = {181--199},
  publisher = {Springer},
  address   = {Cham},
  year      = {2013},
  doi       = {10.1007/978-3-319-00321-4_7},
  url       = {https://doi.org/10.1007/978-3-319-00321-4_7}
}

@misc{HashimotoTsuchiya2014,
  author       = {Hashimoto, Hiroya and Tsuchiya, Takahiro},
  title        = {Convergence rate of stability problems of {SDEs} with (dis-)continuous coefficients},
  year         = {2014},
  eprint       = {1401.4542},
  archivePrefix= {arXiv},
  primaryClass = {math.PR},
  url          = {https://arxiv.org/abs/1401.4542}
}

@article{Priola2012,
  author  = {Priola, Enrico},
  title   = {Pathwise uniqueness for singular {SDEs} driven by stable processes},
  journal = {Osaka Journal of Mathematics},
  volume  = {49},
  number  = {2},
  pages   = {421--447},
  year    = {2012},
  url     = {https://projecteuclid.org/journals/osaka-journal-of-mathematics/volume-49/issue-2/Pathwise-uniqueness-for-singular-SDEs-driven-by-stable-processes/ojm/1340197933.full}
}

@article{Qiao2014,
  author  = {Qiao, Huijie},
  title   = {{Euler--Maruyama} approximation for {SDEs} with jumps and non-{L}ipschitz coefficients},
  journal = {Osaka Journal of Mathematics},
  volume  = {51},
  number  = {1},
  pages   = {47--66},
  year    = {2014},
  url     = {https://projecteuclid.org/euclid.ojm/1396966224}
}

@incollection{Bass2002,
  author    = {Bass, Richard F.},
  title     = {Stochastic differential equations driven by symmetric stable processes},
  booktitle = {S\'eminaire de Probabilit\'es XXXVI},
  series    = {Lecture Notes in Mathematics},
  volume    = {1801},
  pages     = {302--313},
  publisher = {Springer},
  address   = {Berlin},
  year      = {2003},
  url       = {https://www.numdam.org/item/SPS_2002__36__302_0/}
}

@book{Protter2004,
  author    = {Protter, Philip E.},
  title     = {Stochastic Integration and Differential Equations},
  edition   = {Second},
  publisher = {Springer},
  address   = {Berlin},
  year      = {2004}
}

@article{MeyerZheng1984,
  author  = {Meyer, Paul-Andr\'e and Zheng, Wei-An},
  title   = {Tightness criteria for laws of semimartingales},
  journal = {Annales de l'I.H.P. Probabilit{\'e}s et Statistiques},
  volume  = {20},
  number  = {4},
  pages   = {353--372},
  year    = {1984},
  url     = {https://www.numdam.org/item/AIHPB_1984__20_4_353_0/}
}

\end{document}